\numberwithin{subsection}{section}
\numberwithin{equation}{section}
\theoremstyle{plain}
\newtheorem{satz}{Theorem}[section]
\newtheorem*{theorem*}{Theorem}
\newtheorem{lem}[satz]{Lemma}
\newtheorem{prop}[satz]{Proposition}
\newtheorem{cor}[satz]{Corollary}
\theoremstyle{definition}
\newtheorem{defi}[satz]{Definition}
\newtheorem{exam}[satz]{Example}
\newtheorem{remark}[satz]{Remark}
\newcommand{\MG}{\mathfrak{g}}
\newcommand{\g}{\mathfrak{g}}
\newcommand{\MF}{\mathbb{F}}
\newcommand{\MP}{\mathbb{P}}
\newcommand{\MSL}{\mathfrak{sl}_2}
\newcommand{\ME}{\mathcal{E}}
\newcommand{\E}{\mathcal{E}}
\newcommand{\MN}{\mathbb{N}}
\newcommand{\cV}{\mathcal{V}}
\newcommand{\chara}{\mathop{\mathrm{char}}}
\newcommand{\rad}{\mathop{\mathrm{rad}}}
\newcommand{\codim}{\mathop{\mathrm{codim}}}
\begin{document}

\title{A geometric characterization of the symplectic Lie algebra }
\author{Hans Cuypers\footnote{corresponding author}, Yael Fleischmann \footnote{present address: 
Institut f\"ur Mathematik,
Warburger Str. 100,
33098 Paderborn,  yael.fleischmann@math.uni-paderborn.de
}\ \footnote{This work is part of the research programme ”Special elements in Lie Algebras”
(613.000.905), which is (partly) financed by the Netherlands Organisation for
Scientific Research (NWO).
}\\
Department of Mathematics and Computer Science\\
Eindhoven University of Technology\\
P.O. Box 513, 5600 MB Eindhoven\\
The Netherlands\\
email: f.g.m.t.cuypers@tue.nl}

\maketitle

\section{Introduction}
An element $x\neq 0$ in a Lie algebra $\mathfrak{g}$ over a field $\mathbb{F}$ of characteristic $\neq 2$ with Lie product $[\cdot,\cdot]$ is called {\em extremal} if for all $y\in \mathfrak{g}$ we have $$[x,[x,y]]\in\mathbb{F}x.$$

Arjeh Cohen, \emph{et al.} \cite{CSUW01} started the investigation of
Lie algebras generated by extremal elements in order  
to provide a geometric characterization of the classical Lie algebras. 
In \cite{CI07}, he and Gabor Ivanyos construct a point-line space 
on the set of nonzero extremal elements of $\mathfrak{g}$ 
called the \emph{extremal geometry} and denoted by $\E(\g)$.
The points of this geometry are the $1$-spaces generated by  extremal elements, called {\em extremal points}. A line is then the set of extremal points
inside a $2$-dimensional subspace of $\g$, all whose nonzero vectors are extremal elements.
    
Combining the results in \cite{CI06}, \cite{CI07} and \cite{KS02}, 
one can conclude that if $\mathfrak{g}$ 
is of finite dimension, simple and generated by  extremal elements, then $\E(\mathfrak{g})$ either contains no lines or is the root shadow space of a spherical building (i.e., a point-line space related to a spherical building). 
If this building is of finite rank at least $3$, 
then, as is shown in \cite{CRS14, Fl2015,CuyFle17}, the Lie algebra 
$\mathfrak{g}$ is indeed a classical Lie algebra and its extremal elements are 
the long root elements.

In this paper we are concerned with the case that there are no lines 
in the extremal geometry.

An example of such a Lie algebra $\mathfrak{g}$ is the finitary 
symplectic Lie algebra $\mathfrak{fsp}(V,f)$, where  $(V,f)$ is a 
nondegenerate symplectic space over the field $\mathbb{F}$.
Indeed, this Lie algebra is simple, provided the characteristic of $\mathbb{F}$ is not $2$,  
and generated by its rank $1$ elements, which are extremal. 
(Here the rank refers to the dimension of the image of the element in its action on the  module $V$.)
As these rank $1$ elements are the only  extremal elements in $\mathfrak{fsp}(V,f)$  and the sum of any two independent rank $1$ elements is of rank $2$, we do not find any lines in $\E(\g)$. 
See Section \ref{sec:sympl}.

We provide the following characterization of these symplectic Lie algebras.

\begin{satz}\label{MainThm}
Let $\MG$ be a simple Lie algebra over the field $\MF$ with $\chara \MF\neq 2$ and generated by its set of  extremal elements $E$.
Assume the following:

\begin{enumerate}[\rm (a)]
\item  any two  extremal elements 
$x$ and $y$ in $\g$ either commute
or generate an $\mathfrak{sl}_2(\mathbb{F})$;
\item  for any three  extremal elements $x,y,z$ in $\g$ with $[x,y]\neq 0$,
there is an extremal $u$ in the subalgebra $\langle x,y\rangle$ commuting with $z$.
\end{enumerate}
 Then  $\MG$ is isomorphic to $\mathfrak{fsp}(V,f)$ 
for some nondegenerate symplectic space $(V,f)$.

Moreover, under this isomorphism the  extremal elements in $\g$ are mapped to rank $1$ elements in $\mathfrak{fsp}(V,f)$. 
\end{satz}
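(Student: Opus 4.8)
The plan is to reconstruct a symplectic space $(V,f)$ from the geometry of the extremal points of $\g$, and then to match $\g$ with $\mathfrak{fsp}(V,f)$ along this reconstruction.

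\emph{Structure theory and a reformulation of (a).} I would first recall from \cite{CSUW01, CI07} the standard apparatus of extremal elements: the extremal form $g\colon\g\times\g\to\MF$ (available since $\chara\MF\neq2$), symmetric and invariant, with $[x,[x,y]]=2g(x,y)x$ and $g(x,x)=0$ for extremal $x$; the Premet identities; and the fact that the maps $\exp(\operatorname{ad}x)$, $x\in E$, are automorphisms of $\g$ permuting the extremal points. Since $\g$ is simple, $\rad(g)$ is an ideal, so $g$ is zero or nondegenerate; but $g\equiv0$ forces $(\operatorname{ad}x)^2=0$ for all $x\in E$, so no two elements of $E$ generate an $\MSL(\MF)$, whence by (a) all of $E$ commutes and $\g$ is abelian --- impossible. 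Hence $g$ is nondegenerate. Computing inside an $\MSL(\MF)$-subalgebra $\langle x,y\rangle$ then gives $g(x,y)\neq0$ whenever $[x,y]\neq0$, while $g(x,y)=0$ when $[x,y]=0$; here (a) is precisely what excludes ``Heisenberg'' pairs with $[x,y]\neq0=g(x,y)$. Thus on $E$ the relations ``commute'' and ``$g$-orthogonal'' coincide.

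\emph{The copolar space on extremal points.} Let $\mathcal P$ be the set of extremal points. For $p=\langle x\rangle,q=\langle y\rangle$ with $[x,y]\neq0$, so $\langle x,y\rangle\cong\MSL(\MF)$, let the \emph{line} $\overline{pq}$ be the set of extremal points inside $\langle x,y\rangle$; these form a conic, hence a copy of $\mathrm{PG}(1,\MF)$, so every line has $|\MF|+1\geq4$ points (recall $\chara\MF\neq2$). Two points lie on at most one line, since a line recovers its $\MSL(\MF)$. I claim $\Delta=(\mathcal P,\mathrm{lines})$ is a connected, nondegenerate copolar space: a point off a line is non-collinear with exactly one, or with all, of its points. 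Indeed, given $r=\langle z\rangle$ off $\overline{pq}$, hypothesis (b) yields an extremal $u\in\langle x,y\rangle$ with $[z,u]=0$; if also $[z,u']=0$ for an extremal $u'\in\langle x,y\rangle$ not proportional to $u$, then $z$ commutes with $[u,u']\neq0$ and hence with $\langle u,u',[u,u']\rangle=\langle x,y\rangle$, so $r$ is non-collinear with every point of $\overline{pq}$; otherwise only with $\langle u\rangle$. Connectedness and nondegeneracy follow from the usual ideal arguments (the span of the extremal elements in the points of a connected component, or in the ``degenerate'' points, is an ideal, and $\exp(\operatorname{ad}E)$ permutes lines), the degenerate situation reducing at once to $\g\cong\mathfrak{fsp}(\MF^2,f)$.

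\emph{Identification of the geometry.} By the classification of copolar spaces (Hall), or equivalently of groups generated by the transvection subgroups $\langle\exp(\operatorname{ad}x):x\in p\rangle$ (Timmesfeld and others), a connected nondegenerate copolar space all of whose lines have size $\geq4$ --- with hypothesis (b) ruling out the remaining exceptional cases --- is the symplectic copolar space of a nondegenerate symplectic space $(V,f)$ over $\MF$: the field is read off from a line $\mathrm{PG}(1,\MF)$, and for $\dim V\geq3$ the space $V$ is recovered from the projective geometry on $\mathcal P$, with ``commuting'' transported to perpendicularity. It is exactly the fact that \emph{every} projective point of $V$ is extremal that forces the form to be symplectic rather than orthogonal or unitary. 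This produces a bijection $\mathcal P\leftrightarrow\mathrm{PG}(V)$.

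\emph{Reconstruction of $\g$, and the main obstacle.} Finally I would build a Lie isomorphism $\phi\colon\g\to\mathfrak{fsp}(V,f)$: pick a generator $e_p$ of each point $p$ and a representative $v_p$ of the corresponding $1$-space, and set $\phi(e_p)=x_{v_p}$, where $x_v\colon w\mapsto f(v,w)v$. For $\phi$ to be a homomorphism one must first rescale the $e_p$ so that $g(e_p,e_q)=f(v_p,v_q)^2$ holds for all $p,q$ simultaneously --- that is, recognise $g$, after rescaling, as the square of the symplectic form. I expect this normalisation to be the crux of the proof, being delicate precisely because an extremal element determines only a $1$-space while scalings enter quadratically. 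The rigidity of the structure constants of the $\MSL(\MF)$-subalgebras should make it possible, by fixing $\phi$ on the extremal elements of a hyperbolic basis of $V$ and propagating; the Premet identities then give a homomorphism on $\g=\langle E\rangle$. Then $\ker\phi$ is an ideal missing $E$, hence $0$ by simplicity, and $\operatorname{im}\phi$ contains all rank $1$ elements, hence all of $\mathfrak{fsp}(V,f)$; so $\phi$ is an isomorphism carrying $E$ onto the rank $1$ elements. A secondary difficulty is making the classification input of the previous step genuinely applicable --- confirming that (a) and (b) leave no exceptional geometry, and treating $\mathfrak{sp}_2(\MF)$ and $\mathfrak{sp}_4(\MF)$ by hand where the general argument is thin.
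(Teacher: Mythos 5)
The first half of your proposal runs parallel to the paper's, though by a different classification input: you identify the geometry on extremal points as a connected nondegenerate copolar space and invoke Hall/Timmesfeld, whereas the paper proves by direct computation that a symplectic triple generates $\mathfrak{sp}_3(\MF)$ or $\mathfrak{psp}_3(\MF)$ (Proposition \ref{symtrip}), deduces that any two intersecting $\MSL$-lines lie in a symplectic (dual affine) plane, and then applies Cuypers' classification of such spaces to get $\Gamma(\E)\cong HSp(V,f)$. The two routes are close in spirit, but note that the $\mathfrak{sp}_3$ computation is also what cleanly excludes the unitary copolar spaces; your remark that ``every projective point of $V$ is extremal'' is an output of the classification, not something you can feed into it, so on your route you would still need a local argument of this kind to eliminate the exceptional and unitary cases.

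The genuine gap is in your final step. Prescribing $\phi(e_p)=x_{v_p}$ on a spanning set of extremal elements, even after a normalisation making $g(e_p,e_q)=f(v_p,v_q)^2$, does not define a linear map: the extremal elements satisfy many linear relations, and you give no mechanism forcing every such relation in $\g$ to be mirrored in $S_f$, nor do you rule out $\dim\g>\dim S_f$ (a priori the same extremal geometry could live inside a strictly larger algebra). This is exactly where the paper does its real work. It shows that every line of $\mathbb{P}(\E)$ (including the polar lines) spans a plane of $\mathbb{P}(\g)$ meeting the extremal points in a conic (Lemma \ref{V1*}), proves the dimension bound $\dim\g\geq m(2m+1)$ by induction (Lemma \ref{V3*}), and then applies the Schillewaert--Van Maldeghem characterization of quadric Veroneseans (Theorem \ref{SvM}) to obtain a semi-linear bijection $S_f\to\g$ matching the extremal points; a separate argument (Proposition \ref{uniquelieproduct}) shows that the Lie product is determined up to a single global scalar by the $\MSL$-geometry, which is what finally upgrades the vector-space identification to a Lie algebra isomorphism. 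Your ``rescale and propagate via the Premet identities'' is an honest placeholder for all of this, but as written it is an assertion, not a proof. You also leave the infinite-dimensional case untouched; the paper handles it by realizing $\g$ as the direct limit of the finite-dimensional subalgebras $\g_{V_0}$ over nondegenerate finite-dimensional $V_0\subseteq V$, each already identified with $\mathfrak{sp}(V_0,f)$.
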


In Section \ref{sec:extremal} we will show that
the first condition is equivalent with   
the extremal geometry $\E(\g)$ not containing lines.
The second condition guarantees that even after a field extension 
the extremal geometry does  not contain lines.
Both conditions can easily be checked to hold  true in $\mathfrak{fsp}(V,f)$.

The paper is organized as follows.
In Section \ref{sec:extremal} we will provide some basic facts on extremal elements.
Section \ref{sec:sympl}  discusses the finitary symplectic Lie algebras and  their extremal elements.
In  Section \ref{sec:geom} we start with the proof of 
Theorem \ref{MainThm}, which is spread out over the sections \ref{sec:geom}
up to \ref{sec:final}. 

The proof is divided into three major steps.

First we construct a geometry on the set of extremal points in  
a Lie algebra $\g$ satisfying the conditions of Theorem \ref{MainThm}, called the $\MSL$-geometry, which turns out to be isomorphic to the geometry $HSp(V,f)$ of points and hyperbolic lines in a symplectic space $(V,f)$. This is done in Section \ref{sec:geom}.

In the second step, provided in Section \ref{sec:unique}, we show that 
the  Lie product of $\g$ is actually, up to multiplication with a nonzero scalar,
the unique Lie product on  the vector space of $\g$ 
inducing the  $\MSL$-geometry. 

As the $\MSL$-geometry of the symplectic Lie algebra $\mathfrak{fsp}(V,f)$ is
also isomorphic to the $HSp(V,f)$, this implies
that, in order  to prove Theorem \ref{MainThm}, it suffices to establish that
there is a semi-linear invertible map between the vector spaces of $\g$ and $\mathfrak{fsp}(V,f)$ inducing an isomorphism between the corresponding $\MSL$-geometries.

The existence of this semi-linear invertible map  is the topic of the sections \ref{sec:embedding} and  \ref{sec:final}. In Section \ref{sec:embedding} we treat the case where $\g$ is finite dimensional, while
 Section \ref{sec:final} is devoted to  the infinite dimensional case.

\bigskip

\noindent
{\bf Acknowledgment.} Parts of this paper can be found in the second author's PhD-thesis \cite{Fl2015}, which was written under supervision of Arjeh Cohen and the first author. We thank Arjeh Cohen for many inspiring discussions on the topic.

\section{Extremal elements}
\label{sec:extremal}

In this section we provide some 
results on extremal elements from \cite{CSUW01}, that 
will be very useful in the proof of our main results.

Let $\mathfrak{g}$
be a Lie algebra over the field $\mathbb{F}$ of characteristic $\neq 2$ and with
Lie bracket $[\cdot,\cdot]$. 
Then an {\em extremal} element of $\mathfrak{g}$ 
is a nonzero element $x\in \mathfrak{g}$ such that for all $y\in\mathfrak{g}$ we have
$$[x,[x,y]]\in \mathbb{F}x.$$

An  extremal element is called a {\em sandwich} if $[x,[x,y]]=0$ for all $y\in \mathfrak{g}$, and it is called {\em pure} if there is at least one $y\in \mathfrak{g}$ with $[x,[x,y]]\neq 0$.

By $E$ we denote the set  of all extremal elements in $\mathfrak{g}$.
An {\em extremal point} is a $1$-dimensional subspace of $\g$ spanned by an extremal element. The set of all extremal points in $\g$ is denoted by $\ME$.
We assume that $\mathfrak{g}$ is  generated by its set of extremal elements $E$.

\begin{prop}\cite[2.1]{CSUW01}\label{2generators}
For $x,y\in E$ we have one of the following:

\begin{enumerate}[\rm (a)]
\item $\mathbb{F}x=\mathbb{F}y$;
\item $[x,y]=0$ and $\lambda x+\mu y\in E\cup \{0\}$ for all $\lambda,\mu\in \mathbb{F}$;
\item $[x,y]=0$ and $\lambda x+\mu y\in E$ only if $\lambda=0$ or $\mu=0$;
\item $z:=[x,y]\in E$, and $x,z$ and $y,z$ are as in case {\rm (b)};
\item $\langle x,y\rangle $ is isomorphic to $\mathfrak{sl}_2(\mathbb{F})$.
\end{enumerate}

\end{prop}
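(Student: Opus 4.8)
The natural approach is to fix $x,y\in E$: if $\MF x=\MF y$ we are in case~(a), so assume $x,y$ linearly independent and split on whether $[x,y]=0$. Throughout I would use the elementary calculus of extremal elements: for $x\in E$ there is a linear form $g_x$ on $\g$ with $\operatorname{ad}_x^{2}(w)=g_x(w)\,x$ for all $w$ (and $g_x\equiv 0$ exactly when $x$ is a sandwich), one has $\operatorname{ad}_x^{3}=0$, the form $g_x$ vanishes on $\operatorname{im}(\operatorname{ad}_x)$, $g_{\lambda x}=\lambda g_x$, and --- since $\chara\MF\neq 2$ --- the map $\exp(t\operatorname{ad}_x)=\mathrm{id}+t\operatorname{ad}_x+\tfrac{t^{2}}{2}\operatorname{ad}_x^{2}$ is an automorphism of $\g$ carrying $E$ into $E$, for every $t\in\MF$.

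Suppose first that $z:=[x,y]\neq 0$. One checks quickly that $z\notin\langle x,y\rangle$: if $z=\alpha x+\beta y$ then $[x,z]=\beta z$, which also lies in $\MF x$, so $\beta=0$; then $z=\alpha x$ and $[y,z]=-\alpha^{2}x$, which also lies in $\MF y$, so $\alpha=0$, contradicting $z\neq 0$. Hence $\dim\langle x,y\rangle\ge 3$. From $[x,z]=g_x(y)\,x$ and $[y,z]=-g_y(x)\,y$, the Jacobi identity for the triple $x,y,z$ gives $g_x(y)=g_y(x)=:\gamma$; in particular $\operatorname{span}(x,y,z)$ is already closed under the bracket, hence equals $\langle x,y\rangle$. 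If $\gamma\neq 0$, then $\bigl(x,\,-\tfrac{2}{\gamma}z,\,-\tfrac{2}{\gamma}y\bigr)$ is a standard $\MSL$-triple, so $\langle x,y\rangle\cong\MSL(\MF)$ and we are in case~(e). If $\gamma=0$, then $z$ is central in the (Heisenberg) algebra $\langle x,y\rangle$, and I would next prove that $z$ is even a sandwich of $\g$; this is a Jacobi-identity manipulation of $z=[x,y]$ of Premet type, exploiting $g_x(y)=g_y(x)=0$ together with $\operatorname{ad}_x^{3}=\operatorname{ad}_y^{3}=0$. Granting this, $z\in E$, and the automorphisms $\exp(t\operatorname{ad}_y)(x)=x-tz$ and $\exp(t\operatorname{ad}_x)(y)=y+tz$ (the quadratic terms vanishing because $\gamma=0$) show that every element of $\MF x+\MF z$ and of $\MF y+\MF z$ lies in $E\cup\{0\}$; combined with $[x,z]=[y,z]=0$ this is precisely case~(d).

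Now suppose $[x,y]=0$, so that $\operatorname{ad}_x$ and $\operatorname{ad}_y$ commute and $\operatorname{ad}_{\lambda x+\mu y}^{2}=\lambda^{2}\operatorname{ad}_x^{2}+2\lambda\mu\,\operatorname{ad}_x\operatorname{ad}_y+\mu^{2}\operatorname{ad}_y^{2}$. If no $\lambda x+\mu y$ with $\lambda\mu\neq 0$ is extremal, we are in case~(c). Otherwise, after rescaling $x$ and $y$ we may assume $x+y\in E$; then $\operatorname{ad}_x\operatorname{ad}_y=\tfrac12\bigl(\operatorname{ad}_{x+y}^{2}-\operatorname{ad}_x^{2}-\operatorname{ad}_y^{2}\bigr)$, and substituting this back shows that $\operatorname{ad}_{\lambda x+\mu y}^{2}(w)$ always lies in $\MF x+\MF y$, with cross product against $\lambda x+\mu y$ equal to $\lambda\mu(\lambda-\mu)\,\varphi(w)$, where $\varphi:=g_x+g_y-g_{x+y}$. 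Thus if $\varphi\equiv 0$ every $\lambda x+\mu y$ is extremal --- case~(b) --- while if $\varphi\not\equiv 0$ the only extremal directions inside $\MF x+\MF y$ are $\MF x$, $\MF y$ and $\MF(x+y)$. Excluding this last ``triangle'' possibility is, I expect, the real crux of the proposition: it is internally consistent at the level of the pair $\{x,y\}$ alone, so its exclusion must invoke the hypothesis that $\g$ is generated by $E$ --- for instance by choosing a generating extremal $s$ with $[s,x]\neq 0$, transporting the configuration inside $\MF x+\MF y$ by the automorphism $\exp(t\operatorname{ad}_s)$, and extracting a contradiction. Together with the previous paragraph this exhausts the five cases; the two steps that carry the genuine weight are this exclusion and the sandwich claim in case~(d).
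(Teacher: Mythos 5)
The paper does not prove this proposition at all: it is imported verbatim from \cite[2.1]{CSUW01}, so there is no internal proof to compare yours against, and your attempt has to be judged on its own. Your skeleton is the standard one and several of your computations are correct: $[x,y]\notin\operatorname{span}(x,y)$ when $[x,y]\neq 0$, the $\MSL$-triple when $g(x,y)\neq 0$, and the identity showing that for commuting $x,y$ with $x+y\in E$ the proportionality defect of $\mathrm{ad}_{\lambda x+\mu y}^2(w)$ against $\lambda x+\mu y$ is $\lambda\mu(\lambda-\mu)\varphi(w)$ with $\varphi=f_x+f_y-f_{x+y}$. But the two steps you yourself identify as carrying the weight are both defective.

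First, in case (d) you propose to prove that $z=[x,y]$ is a \emph{sandwich}. That is false in general: in $\mathfrak{sl}_3(\MF)$ take $x=E_{13}$ and $y=E_{32}$; then $[x,y]=E_{12}\neq 0$, the extremal form vanishes on the pair (so this is case (d)), yet $[E_{12},[E_{12},E_{21}]]=-2E_{12}\neq 0$, so $E_{12}$ is not a sandwich. The statement you actually need is only that $z$ is extremal, and the proof is a direct Premet-identity computation giving $[z,[z,w]]=\tfrac12\bigl(f_x([y,w])+f_y([x,w])\bigr)z$; a strategy aimed at the sandwich property cannot succeed because it targets an untrue statement. Second, for the exclusion of the ``triangle'' configuration your instinct that the generation hypothesis must enter is right, but the mechanism you sketch (transporting by $\exp(t\,\mathrm{ad}_s)$ for a generating extremal $s$) is not the one that works, and you leave it entirely unexecuted. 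The actual argument is much more elementary: for $u,w\in E$ one has $f_u(w)=f_w(u)$ (apply the Jacobi identity to $[u,[w,[u,w]]]$; the commuting case is trivial), and $f_w$ is by definition a linear form, so for every $w\in E$ one gets $\varphi(w)=f_w(x)+f_w(y)-f_w(x+y)=0$; since $E$ linearly spans $\g$ and $\varphi$ is linear, $\varphi\equiv 0$ and the triangle case collapses into case (b). Both gaps are fillable, but as written the proposal does not constitute a proof of the proposition.
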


\begin{prop}\cite[2.6]{CSUW01}\label{extremalform}
There is an associative symmetric bilinear form $g:\mathfrak{g}\times \mathfrak{g}\rightarrow\mathbb{F}$,
such that for all $x\in E$ and $y\in \mathfrak{g}$ we have
$$[x,[x,y]]=2g(x,y)x.$$
\end{prop}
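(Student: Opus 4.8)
My plan is to manufacture the form from the individual maps $\operatorname{ad}_x^2$, $x\in E$: attach a linear functional to each extremal element, prove a symmetry relation between these functionals, glue them into a bilinear form, and finally derive associativity from the inner automorphisms $\exp(t\operatorname{ad}_x)$. For $x\in E$, since $x\neq0$ and $[x,[x,y]]\in\MF x$ for all $y$, there is a unique scalar, written $2g_x(y)$, with $[x,[x,y]]=2g_x(y)x$; it is linear in $y$, so $g_x\in\g^{*}$, and $\operatorname{ad}_x^3=0$. A preliminary observation I would make is that $\g=\operatorname{span}_{\MF}(E)$: the span of $E$ is closed under the bracket by Proposition~\ref{2generators} (in case~(e) one uses that the maps $\exp(t\operatorname{ad}_z)$, $z\in E$, are well defined since $\operatorname{ad}_z^3=0$, preserve $E$, and sweep $x$ across enough nonzero nilpotents of $\langle x,y\rangle\cong\MSL$ to see that $[x,y]$ is a combination of extremal elements), and it contains the generating set $E$.

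Next I would prove the symmetry $g_x(y)=g_y(x)$ for $x,y\in E$. If $[x,y]=0$ both sides are $0$. Otherwise, compute $[y,[x,[x,y]]]$ in two ways: substituting $[x,[x,y]]=2g_x(y)x$ gives $-2g_x(y)[x,y]$, while the Jacobi identity gives $[y,[x,[x,y]]]=[[y,x],[x,y]]+[x,[y,[x,y]]]=[x,[y,[x,y]]]=-2g_y(x)[x,y]$, where I used $[y,[x,y]]=-[y,[y,x]]=-2g_y(x)y$. As $[x,y]\neq0$ and $\chara\MF\neq2$, this gives $g_x(y)=g_y(x)$.

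Now define $g\bigl(\sum_i\lambda_i x_i,\,w\bigr):=\sum_i\lambda_i g_{x_i}(w)$ for $x_i\in E$, $\lambda_i\in\MF$, $w\in\g$. Well-definedness reduces to showing $\sum_i\lambda_i x_i=0\Rightarrow\sum_i\lambda_i g_{x_i}(w)=0$ for every $w$; for $w=v\in E$ this is $\sum_i\lambda_i g_v(x_i)=g_v\bigl(\sum_i\lambda_i x_i\bigr)=0$ by the symmetry just proved and linearity of $g_v$, and the case of general $w$ follows because $\g=\operatorname{span}(E)$ and each $g_{x_i}$ is linear. The form $g$ is then bilinear, it is symmetric because it is symmetric on the spanning set $E\times E$, and $g(x,y)=g_x(y)$ for $x\in E$, so $[x,[x,y]]=2g(x,y)x$ as required.

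Finally, associativity $g([a,b],c)=g(a,[b,c])$ reduces, by bilinearity, to $a,b,c\in E$, where --- after unwinding the definition of $g$ via symmetry --- it becomes the cyclic identity $g_x([y,z])=g_y([z,x])$. I would derive this from the automorphisms $\phi_t:=\exp(t\operatorname{ad}_z)=\mathrm{id}+t\operatorname{ad}_z+\tfrac{t^{2}}{2}\operatorname{ad}_z^{2}$, $z\in E$: a short computation (using $\operatorname{ad}_z^3=0$ and that $\operatorname{ad}_z^2$ maps into $\MF z$, so that the error terms in $\phi_t(a)\phi_t(b)$ cancel --- this is where $\chara\MF\neq2$ is used) shows each $\phi_t$ is a Lie algebra automorphism, hence $\phi_t(E)=E$ and $\operatorname{ad}_{\phi_t(x)}^2=\phi_t\operatorname{ad}_x^2\phi_t^{-1}$, giving $g(\phi_t a,\phi_t b)=g(a,b)$ first for $a\in E$ and then, by linearity, for all $a,b$. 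The left side is a polynomial in $t$ of degree $\le4$, hence constant, so the coefficient of $t$, namely $g([z,a],b)+g(a,[z,b])$, vanishes; extending bilinearly in $z$ yields associativity. (For finite $\MF$ one passes to an infinite extension $\mathbb{K}$, builds the form on $\g\otimes_{\MF}\mathbb{K}$, and uses uniqueness to identify it with $g$ on $\g$; alternatively the cyclic identity admits a direct, if longer, Jacobi-identity proof.) The substantive steps are checking that $\exp(t\operatorname{ad}_z)$ is an automorphism and extracting associativity from its invariance; the symmetry relation and the gluing are routine once $\g=\operatorname{span}_{\MF}(E)$ is available.
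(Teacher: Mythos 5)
The paper does not actually prove this proposition: it is quoted from \cite{CSUW01} (Proposition 2.6 there, with the normalization $f=2g$ noted in the text), so there is no in-paper argument to compare against, and your reconstruction has to stand on its own. It does: the skeleton --- define $g_x\in\g^{*}$ by $[x,[x,y]]=2g_x(y)x$, prove $g_x(y)=g_y(x)$ by evaluating $[y,[x,[x,y]]]$ both directly and via Jacobi, then glue along the spanning set $E$ after the well-definedness check --- is sound and is essentially how the cited source sets things up. Where you genuinely diverge is associativity: the original obtains it from Premet-type identities by direct Jacobi manipulation, whereas you read it off as the infinitesimal form of the invariance of $g$ under the automorphisms $\exp(t\operatorname{ad}_z)$. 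That route is clean but concentrates the real content in exactly the two places you flag. First, the verification that $\mathrm{id}+t\operatorname{ad}_z+\tfrac{t^2}{2}\operatorname{ad}_z^2$ is an automorphism: the $t^3$-term $\tfrac12\bigl([\operatorname{ad}_z^2a,\operatorname{ad}_zb]+[\operatorname{ad}_za,\operatorname{ad}_z^2b]\bigr)$ vanishes only because $\operatorname{ad}_z^2u=2g_z(u)z$ forces the two summands to be $\pm4g_z(a)g_z(b)z$, not merely because $\operatorname{ad}_z^3=0$; this is precisely the content of the paper's Proposition \ref{auto}, so your proof is a reorganization of the quoted machinery rather than something more elementary. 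Second, the step from ``the degree-$4$ polynomial $t\mapsto g(\phi_ta,\phi_tb)$ is the constant $g(a,b)$'' to ``its $t$-coefficient $g([z,a],b)+g(a,[z,b])$ vanishes'' genuinely fails over $\mathbb{F}_3$, and your base-change escape hatch is needed and works (extremality and the functionals $g_x$ extend scalars, and the extended form restricts to $g$). The only blemish is ordering: you invoke the automorphism property of $\exp$ already in the preliminary step showing $[x,y]\in\operatorname{span}_{\MF}(E)$ in case (e) of Proposition \ref{2generators}, before you establish it; that verification must come first, and it depends only on $z$ being extremal, so no circularity results once the steps are permuted.
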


The form $g$ {\em can and will} be chosen in such a way that for all sandwiches
$x\in E$ we have $g(x,y)=0$ for all $y$.
In particular, sandwiches are in the radical of $g$.

The form $g$ is called the {\em extremal form} on $\mathfrak{g}$.
As the form $g$ is associative, its radical $\rad(g)$ is an ideal in $\mathfrak{g}$. 

Notice that the extremal form $f$ from \cite{CSUW01} satisfies $f=2g$.

\begin{prop}\cite{CSUW01}\label{auto}
Let $x\in E$ be pure. Then for each $\lambda\in \MF$ the map
$$\mathrm{exp}(x,\lambda):\g\rightarrow \g,$$
defined by
$$\mathrm{exp}(x,\lambda)y=y+\lambda[x,y]+\lambda^2g(x,y)x$$
for all $y\in \g$, is an automorphism of $\g$.
\end{prop}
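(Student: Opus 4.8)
The map $\exp(x,\lambda)$ is the formal exponential of the inner derivation $D\colon \g\to\g,\ y\mapsto [x,y]$, and the plan is to check directly that this exponential is an invertible Lie algebra homomorphism. The starting point is that $D$ is nilpotent of order at most $3$: by Proposition \ref{extremalform} we have $D^2y=[x,[x,y]]=2g(x,y)x$ for every $y\in\g$, hence $D^3y=[x,2g(x,y)x]=2g(x,y)[x,x]=0$. Since $\chara\MF\neq 2$, the linear map $\exp(\lambda D):=\mathrm{id}_\g+\lambda D+\tfrac12\lambda^2D^2$ is therefore well defined, and substituting $D^2y=2g(x,y)x$ shows it coincides with the formula given for $\exp(x,\lambda)$.

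Next I would record that $\lambda\mapsto\exp(\lambda D)$ is multiplicative. Because $D^3=0$, multiplying out $\exp(\lambda D)\exp(\mu D)=(\mathrm{id}_\g+\lambda D+\tfrac12\lambda^2D^2)(\mathrm{id}_\g+\mu D+\tfrac12\mu^2D^2)$ and discarding every term involving $D^k$ with $k\geq 3$ leaves $\mathrm{id}_\g+(\lambda+\mu)D+\tfrac12(\lambda+\mu)^2D^2=\exp((\lambda+\mu)D)$. Taking $\mu=-\lambda$ gives $\exp(\lambda D)\exp(-\lambda D)=\mathrm{id}_\g$, so $\exp(x,\lambda)$ is invertible with inverse $\exp(-\lambda D)=\exp(x,-\lambda)$.

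The substance of the argument is to verify that $\Phi:=\exp(\lambda D)$ respects the bracket, i.e.\ $\Phi[a,b]=[\Phi a,\Phi b]$ for all $a,b\in\g$. Since $D$ is a derivation (this is precisely the Jacobi identity), the Leibniz rule gives $D[a,b]=[Da,b]+[a,Db]$ and $D^2[a,b]=[D^2a,b]+2[Da,Db]+[a,D^2b]$, so
$$\Phi[a,b]=[a,b]+[Da,b]+[a,Db]+\tfrac12[D^2a,b]+[Da,Db]+\tfrac12[a,D^2b].$$
Expanding $[\Phi a,\Phi b]=[a+Da+\tfrac12D^2a,\ b+Db+\tfrac12D^2b]$ bilinearly yields exactly these six terms together with the three extra terms $\tfrac12[Da,D^2b]+\tfrac12[D^2a,Db]+\tfrac14[D^2a,D^2b]$. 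Now $D^2a=2g(x,a)x$ and $D^2b=2g(x,b)x$ both lie in $\MF x$, so $[D^2a,D^2b]$ is a multiple of $[x,x]=0$; moreover $\tfrac12[Da,D^2b]=g(x,b)[[x,a],x]=-g(x,b)[x,[x,a]]=-2g(x,a)g(x,b)x$ and, symmetrically, $\tfrac12[D^2a,Db]=g(x,a)[x,[x,b]]=2g(x,a)g(x,b)x$, so these two cancel. Hence all three extra terms vanish and $\Phi[a,b]=[\Phi a,\Phi b]$.

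Putting the pieces together, $\exp(x,\lambda)$ is a bijective Lie algebra homomorphism $\g\to\g$, hence an automorphism. The only step needing real care is the bracket computation: one has to track the binomial coefficients and signs correctly, and the cancellation of the three extra terms is exactly where extremality of $x$ (forcing $D^2a,D^2b\in\MF x$) and antisymmetry ($[x,x]=0$) enter.
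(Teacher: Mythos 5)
Your proof is correct. Note that the paper itself gives no proof of this proposition --- it is imported from \cite{CSUW01} --- so the only comparison available is with the standard argument, which is exactly what you give: $D=\mathrm{ad}_x$ is a derivation with $D^2\g\subseteq\MF x$ and $D^3=0$, the truncated exponential is additive in $\lambda$ and hence invertible, and the homomorphism property reduces to the vanishing of the three cross terms $\tfrac12[Da,D^2b]+\tfrac12[D^2a,Db]+\tfrac14[D^2a,D^2b]$, which you correctly deduce from extremality and antisymmetry (and, usefully, without ever dividing by $3$, so the argument is valid in characteristic $3$ where the generic ``$\exp$ of a nilpotent derivation'' identity would be delicate). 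Two minor points. First, in the bracket computation you silently set $\lambda=1$; this is harmless provided you say you are replacing $D$ by $\lambda D$, which is again a derivation with $(\lambda D)^2\g\subseteq\MF x$ and $(\lambda D)^3=0$, so the identical computation applies. Second, purity of $x$ is nowhere used in your argument; it is simply part of the hypothesis as stated in \cite{CSUW01} (for a sandwich the paper's normalization $g(x,\cdot)=0$ reduces the map to $y\mapsto y+\lambda[x,y]$, and your argument goes through even more easily since then $D^2=0$).
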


If $x,y\in E$ satisfy $g(x,y)\neq 0$, then the above proposition implies that
all elements of the form  $y+\lambda[x,y]+\lambda^2g(x,y)x$ are extremal.
The following result states that these are, up to scalar multiplication, the only extremal elements
in the subalgebra $\langle x,y\rangle$.
 
\begin{prop}\cite[3.1]{CSUW01}\label{elementsinsl2}\label{sl2lines}
Let $x,y\in E$ with $g(x,y)\neq 0$. Then $\langle x,y\rangle \simeq \mathfrak{sl}_2(\mathbb{F})$. 
Moreover, the extremal elements of $\mathfrak{g}$ in  $\langle x,y\rangle $ are the  elements

$$\alpha g(x,y)x+\beta y+\gamma [x,y],$$ where $\alpha\beta=\gamma^2$.

The subalgebra $\langle x,y\rangle$ is generated by any two  noncommuting extremal elements contained in it.
\end{prop}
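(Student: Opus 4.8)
The plan is to pin down $\langle x,y\rangle$ completely by explicit bracket computations and then read off its extremal elements. Write $c:=g(x,y)\neq 0$ and $h:=[x,y]$. First I would identify $\langle x,y\rangle$ with $\MSL$. Proposition~\ref{extremalform} gives $[x,h]=[x,[x,y]]=2cx$ and $[y,h]=[y,[x,y]]=-[y,[y,x]]=-2cy$, so $\MF x+\MF y+\MF h$ is closed under the bracket and hence equals $\langle x,y\rangle$. These three vectors are independent: if $h=0$ then $2cx=[x,h]=0$ forces $x=0$; and (so that $x,y$ are independent) if $h=ax+by$ then $2cx=[x,h]=bh$ forces $b=0$ and then $2c=0$, all impossible. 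Thus $\dim\langle x,y\rangle=3$, and the triple $\bigl(x,\,-c^{-1}y,\,-c^{-1}h\bigr)$ satisfies the structure relations of a standard $\MSL$-triple; matching structure constants on a basis then gives $\langle x,y\rangle\cong\MSL$.

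Next I would treat the extremal elements in $\langle x,y\rangle$. Since $c\neq 0$ the elements $x$ and $y$ are pure, so by Proposition~\ref{auto} each $\expo(x,\lambda)$ is an automorphism of $\g$; as automorphisms preserve $E$, every $\expo(x,\lambda)y=y+\lambda h+\lambda^2 cx$ is extremal, and hence so are its nonzero scalar multiples and the nonzero multiples of $x$. A short check shows these are exactly the nonzero vectors $\alpha cx+\beta y+\gamma h$ with $\alpha\beta=\gamma^2$: for $\beta\neq 0$ one has $\alpha cx+\beta y+\gamma h=\beta\,\expo(x,\gamma/\beta)y$ because $\alpha c/\beta=c(\gamma/\beta)^2$, while for $\beta=0$ the relation forces $\gamma=0$ and leaves a multiple of $x$. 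For the converse I would take an arbitrary extremal $v=\alpha cx+\beta y+\gamma h$, expand $[v,[v,x]]$ using only $[x,y]=h$, $[x,h]=2cx$, $[y,h]=-2cy$, impose $[v,[v,x]]\in\MF v$, and compare the $x$-, $y$- and $h$-coordinates (the generic case $\beta\gamma\neq 0$ first, then the two degenerate ones); this forces $\alpha\beta=\gamma^2$, so there are no further extremal elements.

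For the last assertion, let $u,v$ be noncommuting extremal elements of $\langle x,y\rangle$. By Proposition~\ref{2generators}, since $[u,v]\neq 0$ the pair falls in case~(d) or~(e). Case~(d) cannot occur: there $z=[u,v]$ would be a nonzero extremal element commuting with both $u$ and $v$, whereas the centralizer in $\MSL$ of a nonzero extremal element is spanned by that element, so $\MF u=\MF z=\MF v$ and hence $[u,v]=0$. Thus case~(e) holds, $\langle u,v\rangle\cong\MSL$ is $3$-dimensional, and being a $3$-dimensional subspace of the $3$-dimensional space $\langle x,y\rangle$ it must equal $\langle x,y\rangle$.

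Almost everything here is mechanical; the only point needing a moment's care is the centralizer fact invoked in the last step — equivalently, that the extremal elements of $\MSL$ are precisely the nonzero nilpotents. The cleanest way to avoid worrying about non-split tori over a general field of characteristic $\neq 2$ is to prove this elementarily (normalise a nonzero nilpotent to $\begin{pmatrix}0&1\\0&0\end{pmatrix}$ and compute its centralizer directly), or, even better, to bypass $\MSL$ altogether and carry out all the relevant brackets in the explicit basis $x,y,h$ as above; I expect this bookkeeping to be the main, though modest, obstacle.
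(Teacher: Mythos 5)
Your argument is correct, but note that the paper itself gives no proof of this statement: it is imported verbatim as \cite{CSUW01}*{Proposition 3.1} (with the final sentence an easy consequence), so there is no internal proof to match against. What you have written is a sound self-contained derivation from the other quoted facts. The computations check out: $[x,h]=2cx$ and $[y,h]=-2cy$ follow from Proposition \ref{extremalform}, the independence and closure arguments are fine, and the triple $(x,-c^{-1}y,-c^{-1}h)$ does satisfy the standard $\MSL$ relations. For the classification of extremal elements, the forward inclusion via $\expo(x,\lambda)y$ and Proposition \ref{auto} is exactly the mechanism the paper relies on elsewhere (cf.\ the remark after Proposition \ref{auto}), and the converse does work with the single test vector $w=x$: one gets $[v,[v,x]]=2c\bigl((2c\gamma^2-\alpha\beta c)x+\beta^2y+\beta\gamma h\bigr)$, and comparing the $y$- and $x$-coordinates forces $\gamma^2=\alpha\beta$ when $\beta\neq0$, while $\beta=0$, $\gamma\neq0$ gives a contradiction. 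Your handling of the last assertion is also the right way to do it without assuming $g(u,v)\neq0$: ruling out case (d) of Proposition \ref{2generators} via the centralizer computation $C_{\langle x,y\rangle}(x)=\MF x$ (transported to an arbitrary extremal element of $\langle x,y\rangle$ by the automorphisms $\expo(x,\lambda)$, or by direct bookkeeping in the basis $x,y,h$) is legitimate and is the one point that genuinely needs the care you flag. The only cosmetic quibble is that the set $\{\alpha cx+\beta y+\gamma h:\alpha\beta=\gamma^2\}$ contains $0$, which is not extremal; the proposition implicitly means the nonzero such elements.
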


\begin{remark}\label{oval}
Let $x,y\in E$ with $g(x,y)\neq 0$.
Then the above proposition implies that the 
extremal points of $\g$ in $\langle x,y\rangle$ are the points of a 
{\em conic} in the projective plane spanned by $E\cap\langle x,y\rangle$.
In particular, they form a {\em oval} in this plane, i.e., a set of points
such that any line of the plane meets it in at most two points, and any point of the conic is on exactly one line meeting the conic in just one point.
This line is called  the {\em tangent line}.
The tangent line through $\langle x\rangle$ is the projective line spanned by  $x$ and $[x,y]$.
\end{remark}

An {\em extremal line} in $\mathfrak{g}$ is a 2-dimensional subspace of $\mathfrak{g}$
such that all its elements are extremal and pairwise commuting.
Two linearly independent elements on an extremal line are as in case (b) of the above Proposition \ref{2generators}.

So, we obtain the following.

\begin{lem}
If the Lie algebra  $\mathfrak{g}$ does not contain 
extremal lines, then any two of its extremal elements either
commute or generate an $\mathfrak{sl}_2(\mathbb{F})$.
\end{lem}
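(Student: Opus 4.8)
The plan is to apply Proposition~\ref{2generators} to an arbitrary pair $x,y\in E$ and to observe that forbidding extremal lines leaves exactly the two asserted alternatives. If $[x,y]=0$ there is nothing to prove, so I would assume $[x,y]\neq 0$; then $x$ and $y$ are linearly independent, and the pair $(x,y)$ falls into one of the cases (b), (c), (d), (e) of Proposition~\ref{2generators}. Cases (b) and (c) have $[x,y]=0$, so they are excluded by the standing assumption $[x,y]\neq 0$. Thus it only remains to rule out case (d): once that is done, we are necessarily in case (e), i.e. $\langle x,y\rangle\cong\mathfrak{sl}_2(\mathbb{F})$, which is the claim.

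To eliminate case (d), I would show that it forces the existence of an extremal line. Put $z:=[x,y]$. In case (d) we have $z\in E$, and the pair $(x,z)$ is ``as in case (b)'', so $[x,z]=0$ and $\lambda x+\mu z\in E\cup\{0\}$ for all $\lambda,\mu\in\mathbb{F}$. The vectors $x$ and $z$ are linearly independent: if $z=\alpha x$ with $\alpha\neq 0$, then from $[y,x]=-[x,y]=-\alpha x$ we get $[y,[y,x]]=\alpha^2 x$, whereas Proposition~\ref{extremalform} gives $[y,[y,x]]=2g(x,y)y$; comparing the two expressions and using that $x,y$ are independent forces $\alpha=0$, a contradiction. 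Hence $\langle x,z\rangle=\mathbb{F}x+\mathbb{F}z$ is two-dimensional, all of its nonzero vectors are extremal, and any two of them commute since their bracket is a multiple of $[x,z]=0$. So $\langle x,z\rangle$ is an extremal line, contrary to hypothesis, and case (d) is impossible.

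I do not expect any genuine obstacle: the statement is essentially a reading of Proposition~\ref{2generators} together with the definition of an extremal line, with cases (b) and (d) being precisely the two ways an extremal line can arise among the five alternatives. The only step that truly requires verification is the linear independence of $x$ and $[x,y]$ in case (d), which I would handle by the short computation with the extremal form $g$ indicated above; everything else is immediate.
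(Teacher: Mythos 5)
Your proof is correct and takes essentially the same route the paper intends: the lemma is stated there without an explicit proof, as an immediate consequence of Proposition~\ref{2generators} together with the observation that case (d) yields a pair as in case (b), whose linear span is an extremal line. Your extra verification that $x$ and $[x,y]$ are linearly independent in case (d) is a sound way to make explicit a detail the paper leaves implicit in its case division.
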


\begin{lem}\label{quadratic}
Let $x,y$ and $z\in E$ be linearly independent and such that  $g(x,y)$, $g(x,z)$ and $g(y,z)$ are all nonzero.
If there is no extremal element $u\in\langle x,y\rangle$ commuting with $z$,
then there exists a quadratic extension $\hat{\mathbb{F}}$ of $\mathbb{F}$
such that  $\mathfrak{g}\otimes_{\mathbb{F}} \hat{\mathbb{F}}$
contains extremal lines.
\end{lem}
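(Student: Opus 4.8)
The plan is to work inside the subalgebra $\mathfrak{h}=\langle x,y\rangle\simeq\mathfrak{sl}_2(\mathbb{F})$ and to track how $z$ interacts with it. By Proposition \ref{sl2lines}, the extremal points of $\g$ lying in $\mathfrak{h}$ form a conic $C$ in the projective plane $\MP(\mathfrak{h})$, parametrized by $\alpha g(x,y)x+\beta y+\gamma[x,y]$ with $\alpha\beta=\gamma^2$. An extremal element $u\in\mathfrak{h}$ commutes with $z$ precisely when $g(u,z)=0$ (by Proposition \ref{extremalform}, $[u,[u,z]]=2g(u,z)u$, and for $u$ extremal in an $\mathfrak{sl}_2$ one checks $[u,z]=0 \iff g(u,z)=0$; more carefully, commuting with $z$ is the relevant condition, but the map $u\mapsto g(u,z)$ restricted to $\mathfrak{h}$ is linear, hence its zero set is a hyperplane, i.e., a projective line $\ell$ in $\MP(\mathfrak{h})$). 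So the hypothesis ``no extremal $u\in\langle x,y\rangle$ commutes with $z$'' says exactly that the line $\ell=\{v\in\mathfrak{h}\mid g(v,z)=0\}$ meets the conic $C$ in \emph{no} $\MF$-rational point.

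The next step is to pass to the algebraic closure, or rather to a quadratic extension. Over an extension $\hat{\MF}$ in which $\ell$ meets $C$, there is an extremal point $\langle u\rangle$ of $\g\otimes\hat{\MF}$ inside $\mathfrak{h}\otimes\hat{\MF}$ with $g(u,z)=0$, hence $[u,z]=0$. Since a line either misses a conic, is tangent to it, or meets it in two points, and $\ell\cap C$ has no $\MF$-point but is nonempty over $\bar{\MF}$, the intersection consists of a conjugate pair of points defined over a quadratic extension $\hat{\MF}=\MF(\sqrt{d})$ of $\MF$ (the discriminant of the restriction of the conic's defining quadratic form to $\ell$); in the tangent case one gets a single point already over $\MF$, which is excluded, so $\ell$ is a genuine secant over $\hat{\MF}$ and we obtain $u$ with $g(u,z)=0$, $g(u,u)=0$ (it lies on $C$), hence $u$ is extremal and $[u,z]=0$.

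Now I claim $u,z$ span an extremal line in $\g\otimes\hat{\MF}$: we have $[u,z]=0$, both are extremal, so by Proposition \ref{2generators} applied over $\hat{\MF}$ they fall into case (b) or (c); it remains to rule out (c), i.e., to show $\lambda u+\mu z$ is extremal for all $\lambda,\mu$. For this one computes $g(\lambda u+\mu z,w)$ and $[\lambda u+\mu z,[\lambda u+\mu z,w]]$ for arbitrary $w$, using $[u,z]=0$, $g(u,z)=0$, $g(u,u)=0$ and $g(z,z)=0$ (the latter because $z$ is extremal — here I use that an extremal element satisfies $g(z,z)=0$, as $[z,[z,z]]=0=2g(z,z)z$): the cross terms vanish and the expression collapses to $2g(z,w)\mu(\lambda u+\mu z)$ up to the relevant scalar, lying in $\hat{\MF}(\lambda u+\mu z)$, as long as $g(u,w)=g(z,w)$ in the appropriate ratio — more precisely one needs $\lambda^2 g(u,w)u + \mu^2 g(z,w)z$ proportional to $\lambda u+\mu z$, which forces checking $g(u,w)=0$ whenever one wants genuine proportionality. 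The clean way: since $g(u,u)=g(u,z)=0$ and the form $g$ restricted to $\hat{\MF}u+\hat{\MF}z$ is identically zero, a direct expansion gives $[\,v,[v,w]\,]=2g(v,w)v$ with $v=\lambda u+\mu z$ and $g(v,w)=\lambda g(u,w)+\mu g(z,w)$, so $v$ is extremal for every $\lambda,\mu$; thus $\hat{\MF}u+\hat{\MF}z$ is an extremal line in $\g\otimes\hat{\MF}$.

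The main obstacle is the second step: pinning down that the field of definition of the intersection $\ell\cap C$ is a \emph{quadratic} extension rather than something larger, and ensuring the resulting point really is extremal in the base-changed Lie algebra (i.e., that extremality is preserved under base change and that the parametrization of the conic in Proposition \ref{sl2lines} remains valid over $\hat{\MF}$, which it does since $\langle x,y\rangle\otimes\hat{\MF}\simeq\mathfrak{sl}_2(\hat{\MF})$). One subtlety to handle with care is the degenerate possibility that $\ell$ is the tangent line to $C$ at some point — but a tangent point is $\MF$-rational, contradicting the hypothesis, so $\ell$ is a secant and the two intersection points are a Galois-conjugate pair over a degree-$2$ extension, exactly as needed.
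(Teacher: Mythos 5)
There is a genuine gap at the very first step. You assert that for an extremal $u\in\langle x,y\rangle$ one has $[u,z]=0\iff g(u,z)=0$, and you use this to translate the hypothesis ``no extremal $u\in\langle x,y\rangle$ commutes with $z$'' into ``the line $\ell=\{v\in\mathfrak{h}\mid g(v,z)=0\}$ meets the conic $C$ in no $\mathbb{F}$-rational point.'' Only the implication $[u,z]=0\Rightarrow g(u,z)=0$ is true; the converse fails precisely in case (d) of Proposition \ref{2generators}, where $g(u,z)=0$ but $[u,z]\neq 0$ and $[u,z]$ is itself extremal and commutes with both $u$ and $z$. Under the lemma's hypothesis it is entirely possible that $\ell$ meets $C$ in an $\mathbb{F}$-point $u$: then $u$ does not commute with $z$ (by hypothesis) yet $g(u,z)=0$. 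Your argument silently discards this possibility, whereas the paper treats it as its first case --- there one lands in case (d) and obtains extremal lines already over $\mathbb{F}$ (spanned by $u$ and $[u,z]$, or by $z$ and $[u,z]$).

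The same confusion undermines the second half. After the base change you again deduce $[u,z]=0$ from $g(u,z)=0$ and try to show that $\hat{\mathbb{F}}u+\hat{\mathbb{F}}z$ is an extremal line. The paper's mechanism is the opposite: the two new extremal elements $u_{\lambda_1},u_{\lambda_2}$ generate $\langle x,y\rangle$, with which $z$ does not commute, so $z$ fails to commute with at least one of them; that pair is then $g$-orthogonal but non-commuting, i.e.\ in case (d), and it is the pair $\{u,[u,z]\}$ --- not $\{u,z\}$ --- that spans the extremal line. Your attempted direct verification that every $\lambda u+\mu z$ is extremal also leans on the linearized identity $[u,[z,w]]+[z,[u,w]]=2g(u,z)w+2g(z,w)u+2g(u,w)z$, which you do not derive from the propositions available in the paper. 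To repair the proof, parametrize the conic as $u_\lambda=g(x,y)x+\lambda^2y+\lambda[x,y]$, note that $g(z,u_\lambda)$ is a quadratic polynomial in $\lambda$, split according to whether it has a root in $\mathbb{F}$ or only in a quadratic extension, and in either case invoke case (d) of Proposition \ref{2generators} to produce the extremal line.
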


\begin{proof}
Suppose $x,y$ and $z\in E$ such that  $g(x,y)$, $g(x,z)$ and $g(y,z)$ are all nonzero.
Moreover, assume that there is no extremal element $u\in\langle x,y\rangle$ commuting with $z$.
By \ref{elementsinsl2}, we find the elements $u_\lambda=g(x,y)x+\lambda^2 y+\lambda [x,y]$, where $\lambda\in \mathbb{F}$, to be extremal.
Now $$g(z,u_\lambda)=g(x,y)g(z,x)+\lambda^2 g(z,y)+\lambda g(z,[x,y])$$
either takes the value $0$ for some value in $\lambda=\lambda_1\in \mathbb{F}$ and we find that $u_{\lambda_1}$
does not commute with $z$ but $g(z,u_{\lambda_1})=0$ (case (d) of Proposition \ref{2generators}), which implies that there are extremal lines in $\mathfrak{g}$,
or we find two distinct elements $\lambda_1$ and $\lambda_2$ in a quadratic extension  $\hat{\mathbb{F}}$ of $\mathbb{F}$ with
$g(z,u_{\lambda_1})=g(z,u_{\lambda_2})=0$. 

Suppose we are in the latter case. Then inside $\mathfrak{g}\otimes_{\mathbb{F}} \hat{\mathbb{F}}$ we find the following.
As $\langle u_{\lambda_1},u_{\lambda_2}\rangle$ contains $\langle x,y\rangle$, the element $z$ cannot commute with both
$ u_{\lambda_1}$ and $u_{\lambda_2}$, which then implies that $z$ and at least one of $u_{\lambda_1}$ and $u_{\lambda_2}$ are in relation (d) of \ref{2generators}.
But then $\mathfrak{g}\otimes_{\mathbb{F}} \hat{\mathbb{F}}$ contains extremal lines.
\end{proof}

As Cohen {\em et al.} have studied Lie algebras generated by extremal elements containing  extremal lines, the 
two lemmas justify that, in order to study those Lie algebra generated by extremal elements that do not have extremal lines,
we can restrict ourselves to the Lie algebras satisfying conditions (a) and (b) of Theorem \ref{MainThm}.

\section{The symplectic Lie algebra}
\label{sec:sympl}

We begin this section with a description of the (finitary) symplectic Lie algebra in terms of  tensors. Using this, we provide a  description of the extremal elements of this (finitary) symplectic Lie algebra.

Let $V$ be a vector space over a field $\mathbb{F}$ with dual space $V^*$.
Then $V\otimes V^*$ is isomorphic to the linear space of {\em finitary} linear maps from $V$ to itself. (A linear map is finitary if its kernel has finite codimension.)
Indeed, a pure tensor $v\otimes \phi$ acts linearly on $V$ by $$w\in V\mapsto v\phi(w)$$ and its kernel has codimension one.
As any nonzero element $x\in V\otimes V^*$ can be written as
a finite sum $x=v_1\otimes \phi_1+\cdots +v_k\otimes \phi_k$ with $v_1,\dots, v_k$ and $\phi_1,\dots,\phi_k$ linearly independent,
the action of a nonzero $x$ on $V$ is nontrivial with a kernel of codimension  $k$.   
Moreover, any finitary linear map can be realized by a finite sum of pure tensors.

On $V\otimes V^*$ we can define a Lie bracket by linear extension of  the following product for pure tensors $v\otimes \phi$ and $w\otimes \psi$:
\begin{align*}
[v\otimes\phi, w\otimes \psi] =& (v\otimes \psi)\phi(w)-(w\otimes \phi)\psi(v).
\end{align*}

The corresponding Lie algebra we denote by $\g(V\otimes V^*)$. From the above we easily deduce that $\g(V\otimes V^*)$ is isomorphic with $\mathfrak{fgl}(V)$, the finitary general linear Lie algebra.

For $x\in \mathfrak{g}(V\otimes V^*)$ and $v\in V$  we will write $x(v)$ for the image of $v$ under the natural action of $x$ on $V$.

So, for all $x,y\in \mathfrak{g}(V\otimes V^*)$ and $v\in V$
we do have $$[x,y](v)=x(y(v))-y(x(v)).$$

Suppose $f$ is a bilinear form on $V$. Then for each vector $v\in V$ the map
$f_v:V\rightarrow \mathbb{F}$, with $f_v(w)=f(v,w)$ for all $w\in V$
is an element of $V^*$.
By $S_f(V\otimes V^*)$ or just $S_f$ we denote the subspace of $V\otimes V^*$
spanned by the pure tensors $v\otimes f_v$, with $v\in V$.
(It is spanned by the $f$-symmetric elements.)

We are interested in the case that the bilinear form $f$ is alternating (or symplectic).
So, let $(V,f)$ be a  symplectic space, that is the vector space $V$ together with a nontrivial alternating bilinear form $f:V\times V\rightarrow \mathbb{F}$. 
Moreover, assume that the characteristic of the field $\mathbb{F}$ is different from $2$.

The space ${S}_f(V\otimes V^*)$ is closed under the Lie bracket:
\begin{align*}
[v\otimes f_v, w\otimes f_w] =& (v\otimes f_w)f(v,w)-(w\otimes f_v)f(w,v)\\
=& f(v,w)(v\otimes f_w+w\otimes f_v)\\
=& f(v,w) [(v+w) \otimes f_{v+w} -v\otimes f_v -w\otimes f_w].\\
\end{align*}

The corresponding  Lie subalgebra of $\mathfrak{g}(V\otimes V^*)$ will be denoted by  $\mathfrak{s}_f(V\otimes V^*)$ or, for short, $\mathfrak{s}_f$.

If $f$ is nondegenerate, then  $\mathfrak{s}_f$  can be identified with  the finitary Lie algebra $\mathfrak{fsp}(V,f)$, i.e., the Lie subalgebra of $\mathfrak{fgl}(V)$
of finitary linear transformations $t:V\rightarrow V$ satisfying
$f(t(v),w)=-f(v,t(w))$ for all $v,w\in V$.

\begin{satz}
Suppose $(V,f)$ is a nondegenerate symplectic space over the field $\mathbb{F}$
of characteristic $\neq 2$.
Then the  Lie algebra $\mathfrak{s}_f$ is isomorphic to $\mathfrak{fsp}(V,f)$.
\end{satz}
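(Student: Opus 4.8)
\noindent\textit{Proof strategy.} The plan is to use the identification $\g(V\otimes V^*)\cong\mathfrak{fgl}(V)$ recalled above and to show that, under it, the subspace $S_f$ is carried \emph{exactly} onto the set $\mathfrak{fsp}(V,f)$ of finitary symplectic transformations of $(V,f)$. Since both of these are Lie subalgebras of $\mathfrak{fgl}(V)$ with the bracket inherited from $\mathfrak{fgl}(V)$, this identification of underlying sets immediately gives $\mathfrak{s}_f\cong\mathfrak{fsp}(V,f)$. The inclusion $S_f\subseteq\mathfrak{fsp}(V,f)$ is the easy half: by linearity it suffices to check that each pure tensor $v\otimes f_v$, which acts by $w\mapsto v\,f(v,w)$, satisfies $f\big((v\otimes f_v)(w),u\big)=-f\big(w,(v\otimes f_v)(u)\big)$, and indeed both sides reduce to $f(v,w)f(v,u)$ once one expands and uses that $f$ is alternating.

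For the reverse inclusion I would attach to a given $t\in\mathfrak{fsp}(V,f)$ the bilinear form $B(v,w):=f(t(v),w)$ on $V$. The symplectic identity together with $f$ being alternating shows that $B$ is \emph{symmetric}; and because $f$ is nondegenerate the (left) radical of $B$ is precisely $\ker t$, which has finite codimension since $t$ is finitary, so $B$ has finite rank. Two further observations are needed. First, $\mathrm{im}\,t$ and $\ker t$ are $f$-orthogonal, since for $w\in\ker t$ one gets $f(t(v),w)=-f(v,t(w))=0$; this is the one place where being symplectic, rather than merely finitary, is used. Secondly, the span of the functionals $B(v,\cdot)=f_{t(v)}$, $v\in V$, equals $\langle f_u : u\in\mathrm{im}\,t\rangle$, a finite-dimensional subspace of $V^*$ of dimension $\codim\ker t$; by the orthogonality just noted this span is contained in the annihilator of $\ker t$ in $V^*$, and a comparison of dimensions shows it \emph{equals} that annihilator.

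Now, because $\chara\MF\neq 2$, the nondegenerate symmetric form induced by $B$ on the finite-dimensional quotient $V/\ker t$ admits an orthogonal basis; dualizing the corresponding diagonalization writes $B=\sum_{l}c_l\,\ell_l\otimes\ell_l$ with $c_l\in\MF$ and with the $\ell_l$ spanning the annihilator of $\ker t$ in $V^*$. By the previous paragraph this annihilator is $\langle f_u : u\in\mathrm{im}\,t\rangle$, so each $\ell_l=f_{z_l}$ for suitable $z_l\in V$, whence $B=\sum_l c_l\,f_{z_l}\otimes f_{z_l}$. Setting $t':=\sum_l c_l\,(z_l\otimes f_{z_l})\in S_f$, one checks directly that $f(t'(v),w)=\sum_l c_l\,f(z_l,v)f(z_l,w)=B(v,w)=f(t(v),w)$ for all $v,w$, and nondegeneracy of $f$ then forces $t=t'\in S_f$. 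This gives $\mathfrak{fsp}(V,f)\subseteq S_f$ and completes the proof.

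The only genuinely non-routine ingredients are the classical diagonalization of finite-rank symmetric bilinear forms in characteristic $\neq 2$ --- which is exactly where that hypothesis enters, in the guise ``symmetric forms are spanned by squares $\ell\otimes\ell$'' --- and the bookkeeping that pins the diagonalizing functionals $\ell_l$ inside the image of the map $v\mapsto f_v$. I expect the latter to be the main obstacle: it is what makes the argument work for symplectic (as opposed to arbitrary finitary) operators, and it rests entirely on the $f$-orthogonality of $\mathrm{im}\,t$ and $\ker t$.
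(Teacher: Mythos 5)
Your proof is correct, and it takes a genuinely different route from the paper's. The paper proves the hard inclusion $\mathfrak{fsp}(V,f)\subseteq \mathfrak{s}_f$ by induction on $\codim\ker(s)$: it first produces, by an explicit computation using $\chara\MF\neq 2$, a vector $v$ with $f(v,s(v))\neq 0$, and then subtracts the rank-one symmetric tensor $f(v,s(v))^{-1}\,s(v)\otimes f_{s(v)}$, checks that this strictly enlarges the kernel while keeping the operator symplectic, and recurses --- peeling off one pure tensor at a time. Your argument packages the same mechanism globally: the assignment $t\mapsto B_t$ with $B_t(v,w)=f(t(v),w)$ identifies finitary symplectic operators with finite-rank symmetric bilinear forms, and the classical diagonalization of such forms in characteristic $\neq 2$ produces all the rank-one pieces at once. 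The extra bookkeeping you rightly single out --- that the diagonalizing functionals lie in $\{f_u : u\in\mathrm{im}\,t\}$, via the $f$-orthogonality of $\mathrm{im}\,t$ and $\ker t$ together with a dimension count on the annihilator of $\ker t$ --- is sound (the annihilator of a finite-codimension subspace has dimension equal to that codimension, and $u\mapsto f_u$ is injective by nondegeneracy), and it plays the role of the paper's observation that $\ker(s+f(v,w)^{-1}\,w\otimes f_w)$ properly contains $\ker(s)$. The hypothesis $\chara\MF\neq 2$ enters at the corresponding point in each proof (existence of an anisotropic vector for $B_t$ versus diagonalizability of symmetric forms). What each approach buys: the paper's induction is more elementary and self-contained, needing no structure theory of bilinear forms; yours makes explicit the correspondence between $S_f$ and symmetric forms of finite rank, which better explains \emph{why} the statement is true and why $S_f$ is described as the span of the $f$-symmetric tensors.
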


\begin{proof}
For all $v,u,w\in V$ we have
$$f((v\otimes f_v)(u),w)=f(f(v,u)v,w)=f(v,u)f(v,w)$$
and 
$$f(u,(v\otimes f_v)(w)=f(u,f(v,w)v)=f(v,w)f(u,v).$$
So, $f((v\otimes f_v)(u),w)=-f((v,(v\otimes f_v)w)$.
This shows that the elements from $\mathfrak{s}_f$ induce symplectic elements on
$V$.
As the action of $V\otimes V^*$ on $V$ is faithful, we find that $\mathfrak{s}_f$ is isomorphic to a  subalgebra of $\mathfrak{fsp}(V,f)$.
We will show that this subalgebra equals  $\mathfrak{fsp}(V,f)$.
Hereby we identify elements of $\mathfrak{s}$ with their images in $\mathfrak{fsp}(V,f)$.

Suppose $s\in \mathfrak{fsp}(V,f)$.
We prove by induction on the codimension of $\ker(s)$, that $s\in \mathfrak{s}_f$.

If $\codim (\ker(s))=0$, then there is nothing to prove.

Now suppose $\codim (\ker(s))\geq 1$.

First we prove that there is an element $v\in V$ with $f(v,s(v))\neq 0$.
Let $v_1\in V$ with $s(v_1)\neq 0$. We can assume that $f(v_1,s(v_1))=0$.
Now take $v_2\in V$ with $f(s(v_1),v_2)=1$.
We can assume that also $f(v_2,s(v_2))=0$.
But then 
$$\begin{array}{rl}
f(v_1+v_2,s(v_1+v_2))
=& f(v_1,s(v_1))+f(v_1,s(v_2))+\\
&+f(v_2,s(v_1))+f(v_2,s(v_2))\\
=& 0-f(s(v_1),v_2)-f(s(v_1),v_2)+0\\
=& -2f(s(v_1),v_2)=-2\neq 0.\\
\end{array}$$
So, indeed, there is an element $v\in V$ with $f(v,s(v))\neq 0$.
Fix such element $v$ and let $w=s(v)\neq 0$. Then
$$(s+f(v,w)^{-1} w\otimes f_w)(v)=w+f(v,w)^{-1} f(w,v)w=0.$$
Moreover, if $u\in \ker(s)$, then $f(u,w)=f(u,s(v))=-f(s(u),v)=0$
and hence $(s+f(v,w)^{-1} w\otimes f_w)(u)=0$.
So, $\ker (s+f(v,w)^{-1} w\otimes f_w)$ contains $\ker(s)$ as a proper subspace.
This implies  by
induction that $s+f(v,w)^{-1} w\otimes f_w\in \mathfrak{s}_f$. But then,
as $f(v,w)^{-1} w\otimes f_w\in \mathfrak{s}_f$, also
$s\in \mathfrak{s}_f$. 
\end{proof}

\begin{lem}\label{rank1extremal}
\begin{enumerate}[$(a)$]
\item The pure tensors $v\otimes f_v$, where $0\neq v\in V$, are extremal in $\mathfrak{s}_f$.

\item The  extremal element $v\otimes f_v$ is  pure  if and only if $v\not\in\rad(f)$.
\item The extremal form $g$ satisfies $g(v\otimes f_v,w\otimes f_w)=f(v,w)^2$.
\end{enumerate}
\end{lem}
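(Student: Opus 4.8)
The plan is to verify each of the three claims by direct computation with the tensor description of $\mathfrak{s}_f$, using the formula $[v\otimes\phi,w\otimes\psi]=(v\otimes\psi)\phi(w)-(w\otimes\phi)\psi(v)$ specialized to the $f$-symmetric tensors.

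For part $(a)$, I would compute $[v\otimes f_v,[v\otimes f_v,y]]$ for an arbitrary $y\in\mathfrak{s}_f$. Using the bracket formula twice, $[v\otimes f_v, y]$ has the form $(v\otimes f_v\text{-related terms})$; more concretely, for $y=w\otimes f_w$ one gets $[v\otimes f_v,w\otimes f_w]=f(v,w)(v\otimes f_w+w\otimes f_v)$ from the computation already displayed in the text, and then bracketing again with $v\otimes f_v$ one checks that the result lies in $\mathbb{F}(v\otimes f_v)$. Since such $w\otimes f_w$ span $\mathfrak{s}_f$, linearity finishes it; alternatively one can argue directly on the action on $V$, noting that $(v\otimes f_v)$ has rank $1$ image $\mathbb{F}v$, so $[v\otimes f_v,[v\otimes f_v,y]]$ again has image inside $\mathbb{F}v$ and is $f$-symmetric, hence a scalar multiple of $v\otimes f_v$. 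This also sets up part $(c)$, because tracking the scalar that appears is exactly the value $2g(v\otimes f_v,\,\cdot\,)$.

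For part $(c)$, I would take $y=w\otimes f_w$ and carry the computation of $[v\otimes f_v,[v\otimes f_v,w\otimes f_w]]$ through to the end, collecting the coefficient of $v\otimes f_v$. From the intermediate step $[v\otimes f_v,w\otimes f_w]=f(v,w)(v\otimes f_w+w\otimes f_v)$, bracketing once more with $v\otimes f_v$ produces a term $f(v,w)^2(v\otimes f_v)$ (plus terms that cancel using $f_v(v)=f(v,v)=0$), so $[v\otimes f_v,[v\otimes f_v,w\otimes f_w]]=2f(v,w)^2\,v\otimes f_v$. Comparing with Proposition \ref{extremalform}, which says this equals $2g(v\otimes f_v,w\otimes f_w)(v\otimes f_v)$, yields $g(v\otimes f_v,w\otimes f_w)=f(v,w)^2$ on a spanning set, and bilinearity of $g$ (together with the fact that $f(v,w)^2$ is the restriction of the symmetric bilinear form $(v\otimes f_v,w\otimes f_w)\mapsto f(v,w)^2$, extended by the polarization) gives the general formula. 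One subtlety to address: $g$ is a priori only pinned down once we agree it vanishes on sandwiches, so I must make sure the normalization is consistent — but since $\mathfrak{fsp}(V,f)$ with $f$ nondegenerate has no sandwiches (see part $(b)$), there is no ambiguity, and for degenerate $f$ the claimed formula $f(v,w)^2$ automatically vanishes on the radical, matching the convention.

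For part $(b)$: by definition $v\otimes f_v$ is pure iff there exists $y$ with $[v\otimes f_v,[v\otimes f_v,y]]\neq 0$, i.e.\ iff $g(v\otimes f_v,\,\cdot\,)\not\equiv 0$ on $\mathfrak{s}_f$. By part $(c)$ this means there is $w\in V$ with $f(v,w)^2\neq 0$, i.e.\ $f(v,w)\neq 0$ for some $w$, which is precisely the statement $v\notin\rad(f)$. Conversely if $v\in\rad(f)$ then $f(v,w)=0$ for all $w$, so $v\otimes f_v$ is a sandwich. The main obstacle, such as it is, is bookkeeping: being careful in part $(c)$ that the cross-terms involving $f(v,v)$ and $f(w,w)$ vanish and that the surviving coefficient is exactly $f(v,w)^2$ and not, say, $2f(v,w)^2$ after the factor of $2$ in Proposition \ref{extremalform} is accounted for; everything else is a routine unwinding of the definitions.
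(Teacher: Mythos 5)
Your proposal is correct and follows essentially the same route as the paper: a direct computation of $[v\otimes f_v,[v\otimes f_v,w\otimes f_w]]=2f(v,w)^2\,v\otimes f_v$ on the spanning set of pure tensors, from which extremality, the value $g(v\otimes f_v,w\otimes f_w)=f(v,w)^2$, and the purity criterion $v\notin\rad(f)$ all follow. The extra care you take with the normalization of $g$ on sandwiches is a reasonable precaution, though the paper treats it as immediate.
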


\begin{proof}
For $v,w\in V$ different from $0$ we have
$$\begin{array}{l}
[v\otimes f_v,[v\otimes f_v,w\otimes f_w]] \\
=[v\otimes f_v,f(v,w)(v\otimes f_w+w\otimes f_v)] \\
=f(v,w)[v\otimes f_v,(v+w)\otimes f_{v+w}-v\otimes f_v-w\otimes f_w]\\
=f(v,w)f(v,v+w)(v\otimes f_{v+w}+(v+w)\otimes f_v)\\
-f(v,w)f(v,w)(v\otimes f_w+w\otimes f_v)\\
=f(v,w)^2(v\otimes f_{v+w}+(v+w)\otimes f_v-v\otimes f_w-w\otimes f_v)\\
=2f(v,w)^2\ v\otimes f_v.
\end{array}
$$

Since $\mathfrak{s}_f$ is linearly spanned by its elements of the form $w\otimes f_w$, we find $v\otimes f_v$ to be extremal
and  $g(v\otimes f_v,w\otimes f_w)=f(v,w)^2$.

Clearly $v\otimes f_v$ is pure, if and only if $v\not\in \rad(f)$.
\end{proof}

\begin{lem}\label{forcomp}
Let $v,w,u\in V$. Then $v\otimes f_w+w\otimes f_v\in S_f$ and
$g(v\otimes f_w+w\otimes f_v,u\otimes f_u)=2(f(v,u)\cdot f(w,u))$.
\end{lem}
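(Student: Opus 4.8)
The plan is to reduce both claims to the polarization identity for the assignment $x\mapsto x\otimes f_x$, combined with Lemma \ref{rank1extremal}(c).

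First I would record that $f$ is linear in its first argument, so $f_{v+w}=f_v+f_w$ as elements of $V^*$, and therefore
$$(v+w)\otimes f_{v+w}=v\otimes f_v+v\otimes f_w+w\otimes f_v+w\otimes f_w.$$
Rearranging gives $v\otimes f_w+w\otimes f_v=(v+w)\otimes f_{v+w}-v\otimes f_v-w\otimes f_w$, which exhibits $v\otimes f_w+w\otimes f_v$ as a linear combination of spanning tensors of the form $x\otimes f_x$; hence it lies in $S_f$. This is precisely the identity already used in Section \ref{sec:sympl} to show that $S_f$ is closed under the Lie bracket, so I would just invoke it.

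For the value of the extremal form, I would use that $g$ is bilinear on $\mathfrak{s}_f$ (whose underlying space is $S_f$) and apply the same identity on the first slot:
$$g(v\otimes f_w+w\otimes f_v,\,u\otimes f_u)=g((v+w)\otimes f_{v+w},u\otimes f_u)-g(v\otimes f_v,u\otimes f_u)-g(w\otimes f_w,u\otimes f_u).$$
By Lemma \ref{rank1extremal}(c), $g(x\otimes f_x,u\otimes f_u)=f(x,u)^2$, so the right-hand side equals $f(v+w,u)^2-f(v,u)^2-f(w,u)^2$. Expanding $f(v+w,u)=f(v,u)+f(w,u)$ and cancelling the square terms leaves $2f(v,u)f(w,u)$, which is the asserted value.

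There is no genuine obstacle here; the proof is a two-line bilinearity computation. The only point requiring a moment's care is logical ordering: the extremal form $g$ is a priori only defined on the Lie algebra $\mathfrak{s}_f$, i.e.\ on $S_f$, so the membership $v\otimes f_w+w\otimes f_v\in S_f$ must be established before the formula for $g$ is even meaningful — which is why I would prove the two assertions in exactly the order they are stated.
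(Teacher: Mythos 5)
Your proof is correct and is essentially the paper's own argument: the paper's proof also rests on the polarization identity $(v+w)\otimes f_{v+w}=v\otimes f_v+v\otimes f_w+w\otimes f_v+w\otimes f_w$ together with Lemma \ref{rank1extremal}(c), and you have merely written out the bilinearity computation that the paper leaves implicit.
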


\begin{proof}
That follows directly from the equality
$$(v+w)\otimes f_{(v+w)}=v\otimes f_v+v\otimes f_w+w\otimes f_v+w\otimes f_w$$
and part (c) of Lemma \ref{rank1extremal}. 
\end{proof}

\begin{lem}\label{fnondegthengnondeg}
The symplectic form $f$ is nondegenerate, if and only if the extremal form $g$ is nondegenerate.
\end{lem}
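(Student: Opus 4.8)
The plan is to prove the two implications separately, in each case by locating the radical $\rad(g)$ inside $\mathfrak{s}_f$; recall that $\mathfrak{s}_f$ is linearly spanned, as a vector space, by the pure tensors $u\otimes f_u$ with $u\in V$, so it suffices to test membership in $\rad(g)$ against these.

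Suppose first that $f$ is degenerate. Fix a nonzero $v\in\rad(f)$; since $(V,f)$ is a symplectic space, $f$ is nontrivial, so $\rad(f)\neq V$ and we may also fix $w\in V\setminus\rad(f)$. From $v\in\rad(f)$ we get $f_v=0$, hence $v\otimes f_w=v\otimes f_w+w\otimes f_v$ lies in $S_f=\mathfrak{s}_f$ by Lemma \ref{forcomp}; it is nonzero because $v\neq 0$ and $f_w\neq 0$. For every $u\in V$, Lemma \ref{forcomp} then gives
$$g(v\otimes f_w,\,u\otimes f_u)=2\,f(v,u)\,f(w,u)=0,$$
the last step using $v\in\rad(f)$. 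Thus $0\neq v\otimes f_w\in\rad(g)$, and $g$ is degenerate. The point worth noting here is that one cannot take $v\otimes f_v$ itself as a witness, since that element vanishes when $v\in\rad(f)$; the tensor $v\otimes f_w$ with $w\notin\rad(f)$ does the job.

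For the converse, suppose $f$ is nondegenerate. Then $\mathfrak{s}_f\cong\mathfrak{fsp}(V,f)$ is a simple Lie algebra, as $\chara\MF\neq 2$. Since $g$ is associative, $\rad(g)$ is an ideal of $\mathfrak{s}_f$, so by simplicity either $\rad(g)=0$ or $\rad(g)=\mathfrak{s}_f$. The latter would force $g\equiv 0$; but $f$ is nontrivial, so there are $v,w\in V$ with $f(v,w)\neq 0$, and then Lemma \ref{rank1extremal}(c) yields $g(v\otimes f_v,w\otimes f_w)=f(v,w)^2\neq 0$, a contradiction. Hence $\rad(g)=0$, i.e.\ $g$ is nondegenerate.

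The argument is short; the only mild subtleties are choosing the right witness element in the degenerate direction and, in the nondegenerate direction, invoking simplicity of $\mathfrak{fsp}(V,f)$, which holds uniformly for any characteristic $\neq 2$ and also in the infinite-dimensional finitary setting. I would expect the degenerate direction to be the place where a reader could momentarily stumble.
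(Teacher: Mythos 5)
Your first direction ($f$ degenerate $\Rightarrow$ $g$ degenerate) is correct and in fact more careful than the paper's own treatment: the paper disposes of this implication in one line by appealing to Lemma \ref{rank1extremal}(c), which is precisely the argument you rightly observe does not work on the nose, since $v\otimes f_v=0$ when $v\in\rad(f)$. Your witness $v\otimes f_w$ with $w\notin\rad(f)$, placed in $S_f$ and tested against the spanning set $\{u\otimes f_u\}$ via Lemma \ref{forcomp}, genuinely closes that gap.

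Your second direction, however, is circular relative to this paper's development. You invoke simplicity of $\mathfrak{s}_f\cong\mathfrak{fsp}(V,f)$, but in the paper simplicity is proved only in the proposition immediately \emph{following} this lemma, and that proof opens with ``As $g$ is nondegenerate by the above proposition\dots''; that is, the paper derives simplicity \emph{from} the nondegeneracy of $g$, not the other way around. Unless you import simplicity from an external source (it is classically true in characteristic $\neq 2$, also in the finitary setting, but it is not available at this point in the paper), your argument cannot stand here. The paper instead argues directly: an element $x\in\rad(g)$ is expanded over a basis $v_1,\dots,v_n$ of a finite-dimensional nondegenerate subspace as $\sum_{i}\lambda_i\, v_i\otimes f_{v_i}+\sum_{i<j}\lambda_{ij}(v_i\otimes f_{v_j}+v_j\otimes f_{v_i})$, so that $g(x,u\otimes f_u)=\sum_i\lambda_i f(v_i,u)^2+\sum_{i<j}2\lambda_{ij}f(v_i,u)f(v_j,u)=0$ for all $u\in V$; choosing $u$ with $f(v_i,u)=1$ and $f(v_j,u)=0$ for $j\neq i$ forces $\lambda_i=0$, and choosing $u$ with $f(v_i,u)=f(v_j,u)=1$ and all other pairings zero forces $\lambda_{ij}=0$, whence $x=0$. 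If you wish to keep your route, either cite an independent proof of simplicity or replace that step with this direct computation.
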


\begin{proof}
If $g$ is nondegenerate, then by Lemma \ref{rank1extremal}(c) also $f$ is nondegenerate.

Now assume  $f$ to be nondegenerate. 
Let $x\in \mathfrak{s}_f$  be in this radical of $g$.
This element $x$ can be written as a finite sum of scalar multiples of tensors 
$v\otimes f_{v}$, where $v\in V$.
Taking a basis $\{v_1,\dots,v_n\}$ for a nondegenerate subspace of $V$
containing all these vectors $v$, we see that  
 $x$ can be written as 
$$x=\displaystyle\sum_{1\leq i\leq j} \lambda_i v_i\otimes f_{v_i}+\displaystyle\sum_{1\leq i<j\leq n}\lambda_{ij}(v_i\otimes f_{v_j}+v_j\otimes f_{v_i})$$
for some scalars $\lambda_i, \lambda_{ij}\in \mathbb{F}$.

Then, as follows from Lemmas \ref{rank1extremal} and \ref{forcomp} ,
$$
\begin{array}{rl}
g(x,u\otimes f_u)=&
g(\displaystyle\sum_{1\leq i\leq n} \lambda_i v_i\otimes f_{v_i}+\displaystyle\sum_{1\leq i<j\leq n}\lambda_{ij}(v_i\otimes f_{v_j}+v_j\otimes f_{v_i}),u\otimes f_u)\\
=&\displaystyle\sum_{1\leq i\leq n} \lambda_i f(v_i,u)^2+
\displaystyle\sum_{1\leq i<j\leq n}2\lambda_{ij} f(v_i,u)\cdot f(v_j,u)\\
=&0\\
\end{array}$$
for each $u$ in $V$.
So, if we take $u$ to be a vector with $f(v_i,u)= 1$ and $f(v_j,u)=0$ for all $j\neq i$, we find $\lambda _i=0$.
Taking for $u$ a vector with $f(v_i,u)=f(v_j,u)=1$ for some $1\leq i< j\leq n$
and $f(v_k,u)=0$ for all $k\neq i,j$, we find $\lambda_{ij}=0$.
But that implies that $x=0$.
So, the radical of $g$ is trivial and $g$ is nondegenerate.
\end{proof}

\begin{prop}
Suppose $(V,f)$ is a nondegenerate symplectic space over the field $\mathbb{F}$
of characteristic $\neq 2$.
Then the Lie  algebra $\mathfrak{s}_f$ is simple.
\end{prop}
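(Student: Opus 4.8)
The plan is to prove simplicity of $\mathfrak{s}_f = \mathfrak{fsp}(V,f)$ by a standard argument: take a nonzero ideal $I$ and show it must be everything. The key leverage is that $\mathfrak{s}_f$ is generated by the extremal (rank $1$) elements $v\otimes f_v$, and that these are all conjugate under the automorphism group generated by the maps $\exp(x,\lambda)$ of Proposition \ref{auto}; moreover, since $f$ is nondegenerate, Lemma \ref{fnondegthengnondeg} tells us the extremal form $g$ is nondegenerate, so there are no nonzero sandwiches and every extremal element is pure. Hence each $x = v\otimes f_v$ gives genuine exponential automorphisms, and the subgroup they generate acts transitively (up to scalars) on the set of extremal elements, since any two hyperbolic pairs can be carried into one another.

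First I would show that a nonzero ideal $I$ contains an extremal element. Pick $0\neq z\in I$; writing $z$ as a finite sum of scalar multiples of tensors $v_i\otimes f_{v_i}$ and symmetric combinations, inside some nondegenerate finite-dimensional subspace $W\leq V$, one computes brackets of $z$ with suitable rank $1$ elements $u\otimes f_u$ to produce, after at most two or three steps, a nonzero element of rank $1$ lying in $I$. Concretely, since $g$ is nondegenerate there is $y=u\otimes f_u$ with $g(z,y)\neq 0$ is not quite what we want directly; instead I would bracket: $[u\otimes f_u, z]$ lands in $I$, and iterating $[u\otimes f_u,[u\otimes f_u,\cdot]]$ against $z$ gives $2g(u\otimes f_u, \cdot)$-type terms. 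A cleaner route: choose $v\in V$ with $z(v)\neq 0$ and $f(v,z(v))\neq 0$ (possible by the argument already used in the proof that $\mathfrak{s}_f=\mathfrak{fsp}(V,f)$, applied to the symplectic transformation $z$), set $w=z(v)$; then $[w\otimes f_w, z]$ and further brackets isolate a rank $1$ element. I would carry this out carefully to land on some $\lambda\, v'\otimes f_{v'}\in I$ with $\lambda\neq 0$, hence $v'\otimes f_{v'}\in I$.

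Next, having one extremal element $v'\otimes f_{v'}\in I$, I would use that $I$ is an ideal: $[v'\otimes f_{v'}, w\otimes f_w] = f(v',w)(v'\otimes f_w + w\otimes f_{v'})\in I$ for all $w$, and then brackets of these with further rank $1$ elements generate all of $\mathfrak{s}_f$. Equivalently and more conceptually, since $\mathfrak{s}_f$ is generated by all extremal elements and these form a single orbit (up to scalars) under the automorphisms $\exp(u\otimes f_u,\lambda)$, and since $I$, being an ideal, is invariant under every inner automorphism $\exp(\mathrm{ad}\,x)$ — in particular under all $\exp(u\otimes f_u,\lambda)$, as these are precisely $\exp(\mathrm{ad}(u\otimes f_u))$ when the extremal element is pure — the ideal $I$ contains the whole orbit of $v'\otimes f_{v'}$, i.e.\ all extremal elements, and therefore $I=\mathfrak{s}_f$. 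To justify transitivity of this automorphism group on extremal elements, I would observe that for $v, w\in V$ with $f(v,w)\neq 0$ (a hyperbolic pair) the corresponding $\langle v\otimes f_v, w\otimes f_w\rangle\cong\mathfrak{sl}_2(\mathbb{F})$ by Lemma \ref{elementsinsl2}, and inside such an $\mathfrak{sl}_2$ the exponential automorphisms move $\langle v\otimes f_v\rangle$ to $\langle w\otimes f_w\rangle$; chaining hyperbolic pairs connects any two extremal points (any two nonzero vectors of $V$ can be joined by a chain of vectors, consecutive ones non-orthogonal, using nondegeneracy).

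The main obstacle I expect is the first step — extracting a rank $1$ element from an arbitrary element of the ideal — because it requires a genuine (if elementary) computation with tensors and symplectic forms rather than an abstract argument, and one must check that the bracketing procedure does not accidentally kill everything; nondegeneracy of $f$ (equivalently of $g$) is what rules that out, so the bookkeeping has to be done over a large enough nondegenerate finite-dimensional subspace containing the support of $z$. A secondary point to handle with care is the characteristic $\neq 2$ hypothesis: it is used both in the identity $[x,[x,y]]=2g(x,y)x$ and in the $\mathfrak{sl}_2$ structure, so I would flag where the factor $2$ matters. Once the rank $1$ element is in hand, the rest is routine orbit-chasing.
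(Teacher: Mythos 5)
Your overall strategy (show a nonzero ideal $I$ contains one extremal element, then propagate to all of them) is the paper's strategy, and your second stage is sound: propagation works either by repeated bracketing or by your automorphism-orbit argument, since $\exp(x,\lambda)=1+\lambda\,\mathrm{ad}_x+\tfrac{\lambda^2}{2}\mathrm{ad}_x^2$ preserves any ideal and chains of non-orthogonal vectors connect all of $V\setminus\{0\}$. The genuine gap is in your first stage, which you yourself flag as ``the main obstacle'': the rank-reduction computation is never carried out, and as sketched it can fail. For instance, with $w=z(v)$ your candidate bracket is $[w\otimes f_w,z]=-\bigl(w\otimes f_{z(w)}+z(w)\otimes f_w\bigr)$, which vanishes identically when $z(w)=z(z(v))=0$, so the procedure does not automatically ``isolate a rank $1$ element'' and would need a case analysis you have not supplied.

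The irony is that you name the correct idea and then discard it. You write that the existence of $y=u\otimes f_u$ with $g(z,y)\neq 0$ (guaranteed by Lemma \ref{fnondegthengnondeg}, since the $u\otimes f_u$ span $\mathfrak{s}_f$) ``is not quite what we want directly'' --- but it is exactly what is wanted. Since $I$ is an ideal, $[y,[y,z]]\in I$, and by Proposition \ref{extremalform} together with Lemma \ref{rank1extremal} this equals $2g(y,z)\,y$, which is a nonzero multiple of $y$ because $\chara\MF\neq 2$. Hence $y=u\otimes f_u\in I$ in one line, with no tensor computation at all; this is precisely how the paper's proof proceeds, after which it repeats the same identity to sweep up all $w\otimes f_w$ with $f(u,w)\neq 0$ and then, by a second iteration, every pure tensor. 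Replacing your first stage by this observation closes the gap and also makes your worry about ``accidentally killing everything'' moot.
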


\begin{proof}
Suppose $f$ is nondegenerate.
Let $\mathfrak{i}$ be a nontrivial ideal of $\mathfrak{s}_f$.
Fix a nonzero element $i\in \mathfrak{i}$.
As $g$ is nondegenerate by the above proposition, and
$\mathfrak{s}_f$ is generated by the extremal elements 
$v\otimes f_v$, where $v\in V$,
we find at least one element $x=v\otimes f_v$ with $g(x,i)\neq 0$.
But then $2g(x,i)x=[x,[x,i]]\in \mathfrak{i}$ and hence $x\in \mathfrak{i}$.

Applying the above with $i$ equal to $v\otimes f_v$,
we find that all elements of the form $w\otimes f_w$, where $f(v,w)\neq 0$,
are in $\mathfrak{i}$.
Repeating the argument with these elements as $i$ shows that
all tensors of the form $u\otimes f_u$ are in $\mathfrak{i}$.
But then  $\mathfrak{i}=\mathfrak{s}_f$.

So, we can conclude that $\mathfrak{s}_f$ is simple.
\end{proof}

\begin{prop}\label{puretensors}
Suppose $f$ is nondegenerate.
Then the extremal elements in the Lie algebra $\mathfrak{s}_f$ are
exactly the pure tensors of the form $\lambda v\otimes f_v$, $0\neq v\in V$, $\lambda \in \MF^*$.
\end{prop}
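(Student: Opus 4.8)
The plan is to prove that an extremal element $x$ of $\mathfrak{s}_f$ always has rank $1$ as a linear operator on $V$. Once this is known, $x=w\otimes\phi$ for some $w\in V\setminus\{0\}$ and $\phi\in V^*\setminus\{0\}$; writing out $f(x(u),u')=-f(u,x(u'))$ forces $\phi\in\MF^* f_w$, so $x=\lambda\,w\otimes f_w$ with $\lambda\in\MF^*$. Conversely every such tensor is extremal by Lemma~\ref{rank1extremal}(a), and scalar multiples of extremal elements are extremal, so this gives the proposition.

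To get at the rank I would exploit the defining relation $[x,[x,y]]=2g(x,y)x$ of Proposition~\ref{extremalform} for the generators $y=v\otimes f_v$ of $\mathfrak{s}_f$. Computing the bracket through the action on $V$ (repeatedly using $f_v\circ x=-f_{x(v)}$, valid since $x$ is symplectic) gives
\[
  [x,[x,v\otimes f_v]]=x^2(v)\otimes f_v+2\,x(v)\otimes f_{x(v)}+v\otimes f_{x^2(v)},
\]
and, writing $x$ as a sum of tensors $w\otimes f_w$ and using bilinearity together with Lemma~\ref{rank1extremal}(c), one finds $2g(x,v\otimes f_v)=-2f(v,x(v))$. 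Equating the two expressions as operators on $V$ and evaluating at an arbitrary $t\in V$ yields the master identity
\begin{equation*}
  f(v,t)\,x^2(v)+2f(x(v),t)\,x(v)+f(x^2(v),t)\,v=-2f(v,x(v))\,x(t)\qquad(v,t\in V).
  \tag{$\ast$}
\end{equation*}

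Now the argument runs as follows. Since $g$ is nondegenerate (Lemma~\ref{fnondegthengnondeg}) and $\mathfrak{s}_f$ is spanned by the $v\otimes f_v$, there is $v_0\in V$ with $a:=f(v_0,x(v_0))=-g(x,v_0\otimes f_{v_0})\neq0$; in particular $v_0$ and $x(v_0)$ are independent. Reading $(\ast)$ with $v=v_0$ as a formula for $x(t)$ shows $\operatorname{im}(x)\subseteq W_0:=\langle v_0,x(v_0),x^2(v_0)\rangle$, so $2\le\dim W_0\le3$. Taking $t=x(v_0)$ in $(\ast)$ gives $3a\,x^2(v_0)+f(x^2(v_0),x(v_0))v_0=0$; if $\dim W_0=3$ this forces $\chara\MF=3$, and otherwise $x^2(v_0)\in\langle v_0\rangle$, so $W_0=U_0:=\langle v_0,x(v_0)\rangle$, a nondegenerate symplectic plane. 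In the latter case $V=U_0\oplus U_0^{\perp}$, and since $\operatorname{im}(x)\subseteq U_0$ while $\ker(x)=\operatorname{im}(x)^{\perp}\supseteq U_0^{\perp}$, the operator $x$ is the image of $x|_{U_0}\in\mathfrak{sp}(U_0)\cong\MSL$ under the natural embedding; restricting the extremal relation shows $x|_{U_0}$ is extremal in $\mathfrak{sp}(U_0)$, hence (by Proposition~\ref{elementsinsl2}, the extremal elements of $\MSL$ being exactly the nonzero nilpotent ones; equivalently, an ad-nilpotent traceless $2\times2$ matrix is nilpotent) satisfies $(x|_{U_0})^2=0$. Thus $\operatorname{rank}(x)=\operatorname{rank}(x|_{U_0})=1$, and feeding $x^2(v_0)=0$ back into $(\ast)$ even gives $x(t)=-a^{-1}f(x(v_0),t)\,x(v_0)$.

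The one case to rule out, and the place I expect the only real work, is $\chara\MF=3$ with $\dim W_0=3$. Here I would derive a contradiction directly from $(\ast)$: taking $t\in\{x(v_0),x^2(v_0)\}$ gives $f(x^2(v_0),x(v_0))=0$ and $x^3(v_0)=0$, and $(\ast)$ also forces $v_0\in\operatorname{im}(x)$, so $\operatorname{im}(x)=W_0$ and $v_0=x(z)$ for some $z$. Then $z,x(z),x^2(z),x^3(z)$ are linearly independent with $x^4(z)=0$, a Pfaffian computation shows $Z:=\langle z,x(z),x^2(z),x^3(z)\rangle$ is nondegenerate with $x$ supported on it, so $x|_Z$ would be a regular nilpotent element of $\mathfrak{sp}_4(\MF)$ --- but evaluating $(\ast)$ at $v=z$ and $t=x^3(z)$ gives $a\,x(v_0)=0$, which is absurd. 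Hence $\dim W_0=3$ never occurs, and the previous paragraph completes the proof. Everything except this characteristic-$3$ exclusion is essentially forced by the single identity $(\ast)$ and the trivial structure of $\MSL$.
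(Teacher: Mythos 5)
Your proof is correct, but it takes a genuinely different route from the paper's. The paper disposes of this in three lines via exponential automorphisms: since $g$ is nondegenerate there is a pure tensor $y=\lambda v\otimes f_v$ with $g(x,y)=1$; then $\mathrm{exp}(x,1)y=x+[x,y]+y$ coincides with $\mathrm{exp}(y,-1)x$, so $x$ is the image of the pure tensor $y$ under a product of two exponential automorphisms of $\mathfrak{s}_f$, hence a scalar multiple of a pure tensor. That argument is very short but leans on the automorphisms carrying pure tensors to scalar multiples of pure tensors --- transparent for $\mathrm{exp}(y,\lambda)$ (it is conjugation by a symplectic transvection of $V$, so preserves the rank of operators) but not a priori for $\mathrm{exp}(x,\lambda)$ with $x$ the yet-unidentified element. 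Your operator-theoretic argument --- extracting the identity $(\ast)$ from $[x,[x,v\otimes f_v]]=2g(x,v\otimes f_v)x$, trapping $\mathrm{im}(x)$ inside $W_0=\langle v_0,x(v_0),x^2(v_0)\rangle$, reducing to extremality in $\mathfrak{sp}(U_0)\cong\MSL$ when $\dim W_0=2$, and excluding the $\chara\MF=3$, $\dim W_0=3$ configuration via the Pfaffian/regular-nilpotent contradiction --- is longer but entirely self-contained and makes no appeal to any property of $\mathrm{Aut}(\mathfrak{s}_f)$. I checked the computations (the formula $2g(x,v\otimes f_v)=-2f(v,x(v))$, the specializations $t=x(v_0)$, $t=x^2(v_0)$, $t=x^3(z)$, and the Gram matrix of $Z$ having Pfaffian $-a^2\neq0$) and they are right. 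One cosmetic point: in characteristic $3$ with $\dim W_0=2$ the relation $3a\,x^2(v_0)+f(x^2(v_0),x(v_0))v_0=0$ does not by itself give $x^2(v_0)\in\langle v_0\rangle$, since the coefficient $3a$ vanishes; but the conclusion you actually use afterwards, namely $W_0=U_0$, holds trivially whenever $\dim W_0=2$, so nothing breaks.
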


\begin{proof}
Let $x$ be an extremal element.
As $f$ is nondegenerate, so is $g$.

As the pure tensors $v\otimes f_v$ with $v\in V$ linearly span $\mathfrak{s}_f$,
there is an element $y=\lambda v\otimes f_v$ with $g(x,y)=1$.
But then $\mathrm{exp}(x,1)y=y+[x,y]+x=\mathrm{exp}(y,1)x$.
So $x=\mathrm{exp}(y,-1)\mathrm{exp}(x,1) y$.
As by Proposition \ref{auto} we have $\mathrm{exp}(x,-1)\mathrm{exp}(y,1)\in\mathrm{Aut}(\mathfrak{s}_f)$,
we find also $x$ to be a scalar multiple of a pure tensor.
\end{proof}

\begin{exam}\label{exam2dim}
Suppose $(V,f)$ is a nondegenerate symplectic space of dimension at least $2$.
Suppose $W$ is a $2$-dimensional subspace of $V$ spanned by the vectors $v_1,v_2$.
Then the extremal elements  $(\lambda v_1+\mu v_2)\otimes f_{\lambda v_1+\mu v_2}$ all lie  in the $3$-dimensional subspace of $\mathfrak{s}_f$ spanned by the three elements $v_1\otimes f_{v_1}, v_2\otimes f_{v_2}$ and $v_1\otimes f_{v_2}+v_2\otimes f_{v_1}$. The extremal points corresponding to the extremal elements in this three space form a conic, and hence also an oval, of  points $\langle (\alpha,\beta,\gamma)\rangle$ (with respect to the given basis) defined  by the quadratic equation $\alpha\beta=\gamma^2$.

Of course, for hyperbolic $W$ this is in line with Proposition \ref{sl2lines}, but it is also true for singular $W$. 
\end{exam}

In the example below we focus on the case that $W$ is a $3$-dimensional
subspace of $V$ on which the form $f$ does not vanish. The Lie subalgebra of $\mathfrak{s}_f$ generated by the extremal elements $w\otimes f_w$, where $w\in W$,
plays a crucial role in this paper.

\begin{exam}\label{exam4}
Now suppose that $(V,f)$ is a nondegenerate $4$-dimensional symplectic space
with basis $e_1,\dots, e_4$ such that $f(e_1,e_3)=f(e_2,e_4)=1$ and $f(e_i,e_j)=0$ for $\{i,j\}\neq \{1,3\},\{2,4\}$.
Let $W\subset V$ be the $3$-dimensional subspace spanned by
$e_1, e_2$, and  $e_3$, and denote by $\mathfrak{s}$ the subalgebra of $\mathfrak{s}_f$ spanned by the elements $v\otimes f_v$, where $v\in W$.
 Then  $e_2\otimes f_{e_2}$ is nontrivial and in the center of $\mathfrak{s}$. 

Note that in the Lie subalgebra $\mathfrak{s}_0$ of $\mathfrak{s}_{f|_W}(W\otimes W^*)$ generated by the elements $v\otimes f_v$ where $v\in W$, we have $e_2\otimes f_{e_2}=0$, since $f_{e_2}$ is constantly zero on $W$. We have $Z(\mathfrak{s})=\langle e_2\otimes f_{e_2}\rangle$ and $\mathfrak{s}_0\cong \mathfrak{s}/Z(\mathfrak{s})$.

In both $\mathfrak{s}$ and $\mathfrak{s}_0$ we find that the elements
 \begin{align*}
v\otimes f_v-(v+\lambda e_2)\otimes f_{v+\lambda e_2}=-\lambda(e_2\otimes f_v+v\otimes f_{e_2})-\lambda^2 e_2\otimes f_{e_2}
\end{align*}
for $\lambda\in \MF$
generate an  
ideal $\mathfrak{i}$ of $\mathfrak{s}$ and $\mathfrak{i}_0$ in $\mathfrak{s}_0$, respectively, which is, modulo the center $\langle e_2\otimes f_{e_2}\rangle$,  isomorphic to the dual natural $2$-dimensional
module for   $\mathfrak{s}/\mathfrak{i}\simeq \mathfrak{sl}_2(\MF)$
and  $\mathfrak{s}_0/\mathfrak{i}_0\simeq \mathfrak{sl}_2(\MF)$, respectively.
Indeed, $\mathfrak{i}/\langle e_2\otimes f_{e_2}\rangle$ is isomorphic to
$\mathfrak{i}_0=\{e_2\otimes f_w\mid w\in W\}$.

We notice that both $\mathfrak{s}$ and $\mathfrak{s}_0$ can be generated by a {\em symplectic triple}, i.e., a triple of elements $x,y,z$ with $[x,y]\neq 0$ and $[y,z]\neq 0$, $z\not\in \langle x,y\rangle$ and $[x,z]=0$. 
For example, we can choose the extremal elements
\begin{align*}
x:= e_1\otimes f_{e_1},\ y:=e_3\otimes f_{e_3},\ z:=(e_1-e_2)\otimes(f_{e_1}-f_{e_2})
\end{align*}
generating both $\mathfrak{s}$ and $\mathfrak{s}_0$.

The  pure tensors in $\mathfrak{s}$ and $\mathfrak{s}_0$ 
are extremal. They are scalar multiples of elements of the form
\begin{align*}
s:=(\alpha e_1+\beta e_2+\gamma e_3)\otimes (\alpha f_{e_1}+\beta f_{e_2}+\gamma f_{e_3}),
\end{align*}
with $\alpha, \beta,\gamma \in \MF$.  \\
Note that all pure tensors commuting with $x=e_1\otimes f_{e_1}$ are  scalar multiples of elements of the form
\[ (\alpha e_1+\beta e_2)\otimes(\alpha f_{e_1}+\beta f_{e_2})=\alpha^2e_1\otimes f_{e_1} +\beta^2e_2\otimes f_{e_2}+\alpha\beta(e_1\otimes f_{e_2}+e_2\otimes f_{e_1}).\]

In $\mathfrak{s}$ these elements span a  $3$-space $U_x$ with basis  $\{e_1\otimes f_{e_1}, e_2\otimes f_{e_2}, e_1\otimes f_{e_2}+e_2\otimes f_{e_1}\}$, and their coefficients $\alpha,\beta,\gamma\in \MF$  with respect to this basis satisfy the  quadratic equation $\alpha\beta=\gamma^2$. Therefore the corresponding $1$-spaces form a conic, so in particular an oval,  inside the projective plane on $U_x$. The same holds for any other choice of extremal $x$ not in the center $\langle e_2\otimes f_{e_2}\rangle$.

In $\mathfrak{s}_0$ the space spanned by these elements is $2$-dimensional and
they lie in  all but one of the projective points of the corresponding projective line. 

\medskip

We finish this example by showing that  all extremal elements
in  $\mathfrak{s}$ and  $\mathfrak{s}_0$ are   scalar multiples of pure tensors.
(As the form $f$ is degenerate on $W$ we can not use Proposition \ref{puretensors}.)
We start with $\mathfrak{s}_0$.
As every extremal element of $\mathfrak{s}_0$ stays extremal in $\mathfrak{s}_0/\mathfrak{i}_0$, which is isomorphic with $\MSL$,
we see that, up to a scalar, every extremal element $x$ in $\mathfrak{s}_0$ is of the form
$$x=v\otimes f_v+e_2\otimes f_w$$
for some $v\in \langle e_1,e_3\rangle$ and $w\in W$.
Now let $u\in W$, then
$$
\begin{array}{rl}
[x,[x,u\otimes f_u]]=&
[v\otimes f_v+e_2\otimes f_w,[v\otimes f_v+e_2\otimes f_w,u\otimes f_u]]\\
=&
[v\otimes f_v+e_2\otimes f_w,f(v,u)(v\otimes f_u+u\otimes f_v)+\\
&+f(w,u)e_2\otimes f_u]\\
=&
2f(v,u)^2v\otimes f_v+2f(w,u)f(v,u) e_2\otimes f_v+\\
&+f(v,u)f(w,v)e_2\otimes f_u.\\
\end{array}
$$ 
 
As this has to be a scalar multiple of $x$ for each $u\in W$, we find
$v-w$ in the radical of $f$.
But then 
$$\begin{array}{rl}
x=&
v\otimes f_v+e_2\otimes f_w\\
=&v\otimes f_v+e_2\otimes f_v\\
=&v\otimes f_v+e_2\otimes f_v+v\otimes f_{e_2}\\
=&(v+e_2)\otimes f_{v+e_2}.\\
\end{array}$$
So, $x$ is a scalar multiple of a pure tensor.

As extremal elements in $\mathfrak{s}$, that are not central,
map modulo the center $\langle e_2\otimes f_{e_2}\rangle$ to extremal
elements in $\mathfrak{s}_0$, 
such extremal element of  $\mathfrak{s}$ is of the form 
$x+z$, where $x$ is a scalar multiple of a pure tensor  and $z$ a central element.
But as $$[x+z,[x+z,y]]=[x,[x,y]]$$ is a scalar multiple of 
$x$ and of $x+z$ for all $y\in \mathfrak{s}$,
we can conclude that $z=0$. So, also in $\mathfrak{s}$
all extremal elements  are scalar multiples of pure tensors.
\end{exam}

\begin{defi}
By $\mathfrak{sp}_3(\MF)$ we denote the Lie algebra  $\mathfrak{s}$ from the example above.

By $\mathfrak{psp}_3(\MF)$ we denote the Lie algebra  $\mathfrak{s}_0$ from the example above. Notice that $\mathfrak{psp}_3(\MF)$ is isomorphic to
$\mathfrak{sp}_3(\MF)$ modulo its center.
\end{defi}

We will now consider a point-line geometry $\Gamma$, called the {\em $\MSL$-geometry}, 
on the set of  
extremal points of $\mathfrak{s}_f$.
The points of this geometry are the pure extremal points. The lines of this
geometry are the subsets of pure extremal points inside a subalgebra
generated by two noncommuting extremal points.

Clearly, this geometry is isomorphic to $HSp(V,f)$, the {\em geometry of hyperbolic lines} of $(V,f)$, whose point set is the set of $1$-spaces of $V$ outside the radical of $f$, and whose lines are the subsets of points inside a hyperbolic
$2$-space of $V$, i.e., a 2-dimensional subspace on which $f$ induces a nondegenerate form. See \cite{Cuy91}.

The  $\MSL$-geometry of $\mathfrak{sp}_3(\MF)$ (and of $\mathfrak{psp}_3(\MF)$)
is then isomorphic to the geometry $HSp(W,f|_W)$ where $(W,f|_W)$ is a $3$-dimensional
symplectic space with nontrivial form $f|_W$.
This geometry is called a {\em symplectic plane}.
It is isomorphic to a projective plane from which a point and all the lines through that point are removed and hence also called a {\em dual affine plane}.

\section{The $\MSL$-geometry }\label{sec:geom}

In this section we start with the proof of our main result, Theorem \ref{MainThm}.
However we consider a slightly more general situation, which will allow us to use induction at various points in our proofs.

In the following, $\MG$ denotes a 
Lie algebra over the field $\MF$ of characteristic $\neq 2$, generated by its set of pure extremal elements $\widehat E$ and  equipped with the extremal form $g$. Let $\widehat\ME$ be the set of pure extremal points of the Lie algebra $\MG$.

Let $x$ and $y$ be two pure extremal points that generate an $\MSL$.
Then the  {\em $\MSL$-line}  on  $x$ and $y$ is defined to be the set of all extremal points in the subalgebra $\langle x,y\rangle\cong\mathfrak{sl}_2$ (see ~\ref{sl2lines}). Denote by $\mathcal{L}(\MG)$, or $\mathcal{L}$ for short, 
the set of all $\MSL$-lines.

Now consider the point-line space $\Gamma(\MG):=(\widehat{\mathcal{E}},\mathcal{L})$. So in $\Gamma(\MG)$, denoted  by $\Gamma$ if it is clear what Lie algebra we refer to, the  points are the pure 
extremal points and two points $x,y\in \widehat\ME$ are on a line if and only if $x$ and $y$ generate an $\MSL$. We call $\Gamma$ the {\em $\MSL$-geometry} of $\mathfrak{g}$.

\begin{lem}
In the $\MSL$-geometry of $\g$ any two points are on at most one line. 
\end{lem}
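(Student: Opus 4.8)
The claim is that two distinct pure extremal points lie on at most one $\MSL$-line. Since an $\MSL$-line through $x$ and $y$ is by definition the set of extremal points in $\langle x,y\rangle$, it suffices to show that if $x,y$ are pure extremal points generating an $\MSL$, then the line they determine — namely $E\cap\langle x,y\rangle$ — does not depend on which pair of its points we use to generate it. Equivalently, for any two noncommuting extremal points $x',y'$ in $E\cap\langle x,y\rangle$, we have $\langle x',y'\rangle=\langle x,y\rangle$.

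First I would recall that $g(x,y)\neq 0$ precisely when $x,y$ generate an $\MSL$ (Proposition \ref{sl2lines} together with the case analysis of Proposition \ref{2generators}, since $x,y$ pure and noncommuting forces case (e)). So the hypothesis that $x$ and $y$ lie on a common line means $\langle x,y\rangle\cong\MSL$ with $g(x,y)\neq 0$. Now the key input is the last sentence of Proposition \ref{sl2lines}: \emph{the subalgebra $\langle x,y\rangle$ is generated by any two noncommuting extremal elements contained in it.} So if $x',y'$ are two distinct points on the line through $x,y$, pick extremal representatives; by Proposition \ref{sl2lines} (the conic description, or Remark \ref{oval}) two distinct extremal points in $\langle x,y\rangle$ correspond to distinct points of a conic, hence the representatives are noncommuting (a conic contains no line, so no two of its points span an extremal line; and they are not proportional since the points are distinct). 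Therefore $\langle x',y'\rangle=\langle x,y\rangle$, and the line determined by $x',y'$ is $E\cap\langle x',y'\rangle=E\cap\langle x,y\rangle$, the same line.

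To finish: suppose $x,y$ lie on two $\MSL$-lines $\ell$ and $\ell'$. Then $\ell=E\cap\langle x,y\rangle=\ell'$ by the previous paragraph applied in each case (both $\ell$ and $\ell'$ are generated by the pair $x,y$, which is noncommuting since the points are distinct and both on a line, hence on an $\MSL$), so $\ell=\ell'$. The one small point that needs care — and the only place a genuine argument is required rather than just quoting — is verifying that two \emph{distinct} extremal points inside $\langle x,y\rangle$ always have noncommuting representatives, so that Proposition \ref{sl2lines}'s generation statement applies. This follows from Remark \ref{oval}: the extremal points in $\langle x,y\rangle$ form a conic/oval, and an extremal line would be a projective line all of whose points are extremal, which an oval does not contain; alternatively one reads it off the explicit parametrization $\alpha g(x,y)x+\beta y+\gamma[x,y]$ with $\alpha\beta=\gamma^2$ in Proposition \ref{sl2lines}, checking directly that two such elements with distinct projective coordinates do not commute. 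I expect this verification to be the main (and only) obstacle, and it is routine.
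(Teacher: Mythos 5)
Your proposal is correct and follows the same route as the paper, whose entire proof is a citation of Proposition \ref{sl2lines}: the last assertion there (the subalgebra is generated by any two noncommuting extremal elements it contains) is exactly the uniqueness statement, and you correctly identify that the only point needing verification is that two distinct extremal points inside $\langle x,y\rangle$ never commute. One small caution: the oval argument alone only excludes case (b) of Proposition \ref{2generators} (an extremal line), not cases (c) or (d), so of your two suggested justifications it is the direct computation with the parametrization $\alpha g(x,y)x+\beta y+\gamma[x,y]$, $\alpha\beta=\gamma^2$, that actually closes the argument --- and it does, since the vanishing of the bracket of two such elements forces their coordinate triples to be proportional.
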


\begin{proof}
See Lemma \ref{sl2lines}.
\end{proof}

On $\widehat\ME$ we define the relation $\perp$ by:
 \[\ x\perp y\Leftrightarrow [x,y]=0.\]
If $X\subseteq \widehat\ME$, then $X^\perp$ denotes the set $\{y\in \widehat\E\mid x\perp y \mathrm{\ for\ all\ } x\in X\}$. For $x\in \widehat\E$ we often write $x^\perp$ for $\{x\}^\perp$.

A {\em subspace} $X$ of $\Gamma$ is a subset $X$ of $\widehat\E$ with the property that any line meeting $X$ in at least two points is contained in $X$.

As the intersection of any collection of subspaces is again a subspace, we can define the {\em subspace  generated by} a 
subset $Y$ of $\widehat\E$ to be the intersection
of all subspaces containing $Y$.

We often identify  a subspace $X$ with the point-line space
$\Gamma(X):=(X,\mathcal{L}_X)$ where $\mathcal{L}_X=\{\ell\in \mathcal{L}\mid \ell\subseteq X\})$.

A subspace $X$ of  $\Gamma$ is called {\em nondegenerate} 
if it is connected and for any pair of elements $x,y\in X$ with $x^{\perp}\cap X=y^{\perp}\cap X$, it follows that $x=y$.

From now on we assume that $\mathfrak{g}$
is generated by a subset $\E$ of $\widehat\E$ such that
\begin{enumerate}[(A)]
\item any two elements $x,y\in \E$ commute or generate a subalgebra of $\g$ isomorphic to $\MSL(\mathbb{F})$;
\item for any three elements $x,y,z\in \E$ with $[x,y]\neq 0$, there is an
element $u\in \E$ contained in $\langle x,y\rangle$ commuting with $z$.
\item $\E$ is a connected subspace of $\Gamma$ which linearly spans $\g$. 
\end{enumerate}

The set of extremal elements in $\g$ that span points of $\E$ is denoted by $E$.
As in the previous section, a {\em symplectic triple of extremal elements} is a triple $(x,y,z)$ of elements in $\widehat E$ with
$[x,y]\neq 0 \neq [y,z]$, $z\not\in\langle x,y\rangle$ and $[x,z]=0$.
A {\em symplectic triple of extremal points} is a triple $(x,y,z)$ of extremal points
containing elements $x_1\in x$, $y_1\in y$ and $z_1\in z$ forming a symplectic triple $(x_1,y_1,z_1)$.
  
\begin{prop}\label{symtrip}
A symplectic triple $(x,y,z)$ of pure extremal elements of the Lie algebra $\MG$ generates either a subalgebra isomorphic to $\mathfrak{sp}_3(\mathbb{F})$, in which case it is of dimension $6$, or to $\mathfrak{psp}_3(\mathbb{F})$ of dimension $5$.

Under this isomorphism $x$, $y$ and $z$ are mapped onto scalar multiples of pure tensors
of $\mathfrak{sp}_3(\MF)$ or 
$\mathfrak{psp}_3(\mathbb{F})$, respectively.
\end{prop}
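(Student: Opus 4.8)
The plan is to prove Proposition~\ref{symtrip} by setting up explicit structure constants for the subalgebra $\mathfrak{a}:=\langle x,y,z\rangle$ generated by a symplectic triple $(x,y,z)$ of pure extremal elements, and then recognizing $\mathfrak{a}$ as one of the two model algebras $\mathfrak{sp}_3(\MF)$ or $\mathfrak{psp}_3(\MF)$ of Example~\ref{exam4}. First I would observe that since $[x,y]\neq 0$, Proposition~\ref{elementsinsl2} (via condition (A)) gives $\langle x,y\rangle\cong\MSL$ with the standard basis $\{x, y, h:=[x,y]\}$, after rescaling so that $g(x,y)=1$; here $[h,x]=-2x$, $[h,y]=2y$ (up to the usual sign conventions). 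I would then bring in $z$ and use $[x,z]=0$ together with extremality to pin down $[y,z]$ and $[h,z]$. The key computational input is Proposition~\ref{extremalform}: $[x,[x,w]]=2g(x,w)x$ for every $w$, and associativity of $g$. Applying this to $w=z$ and using $[x,z]=0$ gives $g(x,z)=0$; then $[x,[y,z]] = [[x,y],z]+[y,[x,z]] = [h,z]$ by Jacobi, so $[h,z]$ lies in the span of whatever $[x,[y,z]]$ is, and one finds $[h,z]=[x,[y,z]]$ while $[x,[x,[y,z]]]=0$, forcing $[h,z]\in x^\perp$-type constraints. Iterating Jacobi and extremality on the elements $z$, $[y,z]$, $[x,[y,z]]$, $[y,[y,z]]$ shows that $\mathfrak{a}$ is spanned by at most the six elements $x, y, h, z, [y,z], [x,[y,z]]$ (the last of which equals $[h,z]$), and all further brackets close inside this span.

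The second step is to compute all structure constants among these six spanning vectors, using only the Jacobi identity, extremality, and the associativity/symmetry of $g$, leaving free parameters $g(y,z)$ and $g(h,z)=g([x,y],z)$ (note $g(x,z)=0$ is already forced). The crucial case split is whether the element $w_2:=[y,[y,z]]$, which by extremality of $y$ equals $2g(y,z)y$, contributes independently — it does not, since it is a multiple of $y$ — and whether the would-be sixth basis vector $[h,z]=[x,[y,z]]$ is zero or not. A short argument shows $[x,[y,z]]\neq 0$: if it vanished then $[h,z]=0$, so $z$ centralizes $\langle x,y\rangle$, and then $\langle y,z\rangle$ would force (via Jacobi with $x$) a degeneration contradicting $z\notin\langle x,y\rangle$ and $[y,z]\neq 0$. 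So $\mathfrak{a}$ has dimension either $6$ or $5$, the drop to $5$ occurring exactly when a specific linear relation among $\{z,[y,z],[h,z], \ldots\}$ holds, which will turn out to correspond to the central element $e_2\otimes f_{e_2}$ of Example~\ref{exam4} being zero, i.e., to $\mathfrak{psp}_3$ rather than $\mathfrak{sp}_3$. Matching the computed structure constants against those of the models in Example~\ref{exam4} — where $x=e_1\otimes f_{e_1}$, $y=e_3\otimes f_{e_3}$, $z=(e_1-e_2)\otimes(f_{e_1}-f_{e_2})$ — then identifies $\mathfrak{a}$ with $\mathfrak{sp}_3(\MF)$ or $\mathfrak{psp}_3(\MF)$; this identification is made canonical by the already-established fact (end of Example~\ref{exam4}) that both model algebras are \emph{generated} by such a symplectic triple, so the map sending the abstract triple to the model triple extends to an isomorphism once we know the structure constants agree.

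For the final clause — that $x$, $y$, $z$ map to scalar multiples of pure tensors — I would invoke the classification of extremal elements in the model algebras proved at the end of Example~\ref{exam4}: \emph{every} extremal element of $\mathfrak{sp}_3(\MF)$ or $\mathfrak{psp}_3(\MF)$ is a scalar multiple of a pure tensor. Since $x,y,z$ are extremal in $\mathfrak a$, hence in the model, the claim follows immediately. The main obstacle I anticipate is the bookkeeping in step two: showing that the span of the six listed elements is genuinely closed under all brackets, and correctly tracking the signs and the two free $g$-parameters through the Jacobi identities so that the structure constants land exactly on the model's. A secondary subtlety is the $5$ versus $6$ dichotomy — I need to verify that the only possible linear dependency is the expected one (collapsing the center), rather than some more drastic degeneration; this is where associativity of $g$ and the purity of $x$, $y$, $z$ (so that $\expo$-automorphisms are available via Proposition~\ref{auto}) do the real work, essentially reducing the verification to a rank computation on the $6$-dimensional model and its known $5$-dimensional quotient.
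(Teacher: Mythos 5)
Your proposal follows essentially the same route as the paper: span $\langle x,y,z\rangle$ by the six elements $x,y,[x,y],z,[y,z],[x,[y,z]]$, pin down the multiplication table via the Jacobi identity, extremality, and associativity of $g$, and then match against the models of Example~\ref{exam4} to conclude $\mathfrak{s}$ is $\mathfrak{sp}_3(\MF)$ or its central quotient, with the pure-tensor claim following from the classification of extremal elements there. One small correction: $g([x,y],z)$ is not a free parameter but is forced to vanish by associativity (since $[x,z]=0$), which is exactly what makes $[x,[x,[y,z]]]=0$ in your computation.
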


\begin{proof}
Let $(x,y,z)$ be a symplectic triple and $\mathfrak{s}$ the subalgebra generated by $x$, $y$ and $z$.

Consider the six elements $x$, $y$, $z$, $[x,y]$, $[y,z]$ and $[x,[y,z]]$.
Notice that after rescaling we can assume that $g(x,y)=g(z,y)=1$ and $g(x,z)=0$.

We will prove that the subalgebra $\langle x,y,z\rangle$ is linearly spanned by these six elements and that their multiplication table is uniquely determined.

By associativity of $g$, we have
$g(a,[b,c])=0$ for all choices of $a,b,c\in \{x,y,z\}$.
So, $[x,[x,[y,z]]]=2g(x,[y,z])x=0$.  
Moreover, as $[x,z]=0$ the Jacobi identity implies that  $[x,[y,z]+[z,[x,y]]=0$
and we have  
$[z,[x,[y,z]]]=[z,[z,[y,x]]]=2g(z,[y,x])z=0$.
Furthermore, using these identities and the Jacobi identity repeatedly, we find 

$$\begin{array}{rl}
[y,[x,[y,z]]]
=&-[x,[[y,z],y]]-[[y,z],[y,x]]\\
=&[x,2y]+[[z,[y,x]],y]+[[[y,x],y],z]]\\
=&2[x,y]-[y,[z,[y,x]]]+[-2y,z]\\
=&2[x,y]-[y,[x,[y,z]]-2[y,z].\\ 

\end{array}$$
So, $[y,[x,[y,z]]]=[x,y]-[y,z]$.
And,

$$\begin{array}{rl}
[[x,y],[y,z]]
=&- [[y,[y,z],x]-[[[y,z],x],y]\\
=&-2[y,x]+[[[x,y],z],y]\\
=&2[x,y]-[[z,y],[x,y]]-[[y,[x,y]],z]\\
=&2[x,y]-[[x,y],[y,z]]+2[y,z],\\
\end{array}$$
which implies that $[[x,y],[y,z]]=[x,y]+[y,z]$.

But then
$$\begin{array}{rl}
[[x,y],[x,[y,z]]]
=&-[x,[[y,z],[x,y]]]-[[y,z],[[x,y],x]]\\
=&[x,[x,y]+[y,z]]-[[y,z],-2x]]\\
=&2x-[x,[y,z]]\\
\end{array}$$ and
$$\begin{array}{rl}
[[y,z],[x,[y,z]]]
=&-[[z,[x,[y,z]]],y]-[[[x,[y,z]],y],z]\\
=&[[y,[x,[y,z]]],z]\\
=&-[[x,y],z]-[[y,z],z]\\
=&[x,[y,z]]-2z.\\
\end{array}$$

These computations indeed imply that
the subspace of $\MG$ linearly spanned by these six elements is closed under multiplication. Moreover, the multiplication table
of these six  elements  is completely determined.
In particular,  $\mathfrak{s}$ has dimension at most $6$.

The algebra $\mathfrak{sp}_3(\MF)$ is $6$-dimensional and generated by a symplectic triple, see Example \ref{exam4}. But then
the above implies that  $\mathfrak{sp}_3(\MF)$ has a basis satisfying the same 
multiplication table as the above six elements spanning $\mathfrak{s}$.
So, $\mathfrak{s}$ is isomorphic to a quotient of $\mathfrak{sp}_3(\MF)$.
In particular, it is isomorphic to $\mathfrak{sp}_3(\MF)$ or $\mathfrak{psp}_3(\MF)$.
Moreover, this isomorphism
can be chosen to map $x,y$ and $z$ onto pure tensors.
\end{proof}

\begin{prop}\label{symplecticplane}
Two intersecting $\mathfrak{sl}_2$-lines inside the subspace $\E$ of $\Gamma$
generate a subspace isomorphic to a symplectic plane.
\end{prop}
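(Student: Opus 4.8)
The plan is to realize the configuration of two intersecting $\mathfrak{sl}_2$-lines as a symplectic triple of extremal elements and then invoke Proposition~\ref{symtrip}. Let $\ell_1$ and $\ell_2$ be two distinct $\mathfrak{sl}_2$-lines inside $\E$ meeting in a common point $p$. Pick extremal representatives: let $x\in p$, and choose $y_1\in\ell_1$, $y_2\in\ell_2$ with $p,y_1$ generating $\langle x,y_1\rangle\cong\MSL$ and similarly $p,y_2$ generating $\langle x,y_2\rangle\cong\MSL$. The first task is to produce, out of $y_1$ and $y_2$, a point $z$ collinear with $y_1$ on $\ell_1$ (or in its span) but commuting with $x$, or more directly a genuine symplectic triple. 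Here condition (B) is the key tool: applied to the triple $(x,y_1,y_2)$ (after checking $[x,y_1]\neq 0$), it yields an extremal $u\in\langle x,y_1\rangle$ with $[u,y_2]=0$; replacing $y_1$ by such a $u$ lets me arrange one of the two commuting relations needed in a symplectic triple. I would then set up $(y_2, x, u)$ or a relabeling thereof so that the defining conditions $[y_2,x]\neq 0\neq [x,u]$ or the symplectic-triple pattern $[a,b]\neq 0\neq[b,c]$, $c\notin\langle a,b\rangle$, $[a,c]=0$ hold.

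The second task is to check the non-degeneracy condition $z\notin\langle x,y\rangle$ in the symplectic triple, i.e.\ that the putative third point genuinely lies outside the first $\mathfrak{sl}_2$. This is where I expect the main subtlety: a priori the point $u$ produced by (B), or the point $y_2$, might collapse into $\langle x,y_1\rangle$, which would force $\ell_1$ and $\ell_2$ to coincide or to meet in more than one point, contradicting the assumption that they are distinct intersecting lines (and that two points lie on at most one line, by the Lemma following Proposition~\ref{sl2lines}). So I would argue: if the candidate triple were degenerate in this sense, the two lines would have to be equal, contrary to hypothesis; hence the triple is a genuine symplectic triple. I would also need $g$-values to be suitably nonzero/zero so that $\langle x,y_1\rangle$ and $\langle x,y_2\rangle$ really are $\MSL$'s — but that is exactly what it means for $\ell_1,\ell_2$ to be $\mathfrak{sl}_2$-lines, together with Proposition~\ref{sl2lines}.

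Once a symplectic triple of pure extremal elements is in hand, Proposition~\ref{symtrip} tells us the subalgebra they generate is isomorphic to $\mathfrak{sp}_3(\MF)$ or $\mathfrak{psp}_3(\MF)$, with $x,y,z$ mapped to (scalar multiples of) pure tensors. By the discussion at the end of Section~\ref{sec:sympl}, the $\MSL$-geometry of $\mathfrak{sp}_3(\MF)$ and of $\mathfrak{psp}_3(\MF)$ is in both cases isomorphic to the symplectic plane $HSp(W,f|_W)$ on a $3$-dimensional symplectic space. So the subspace of $\Gamma$ generated by $\ell_1$ and $\ell_2$ — which is the set of pure extremal points in $\langle x,y_1,y_2\rangle$, and which contains $\ell_1$ and $\ell_2$ as lines through the common point — is isomorphic to a symplectic plane. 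I would close by noting that the subspace generated by $\ell_1\cup\ell_2$ cannot be larger than the pure extremal points of $\langle x,y_1,y_2\rangle$, since that point-set is already a subspace (being the point set of $\mathfrak{sp}_3$ or $\mathfrak{psp}_3$, whose $\MSL$-geometry is a symplectic plane, which is a subspace-closed configuration), and it contains both lines.

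The main obstacle, as indicated, is the bookkeeping needed to extract an honest symplectic triple from the raw data of two intersecting $\mathfrak{sl}_2$-lines and condition (B): one must choose representatives carefully, apply (B) in the right order, verify the genuinely ``outside'' condition via the uniqueness of lines through two points, and make sure none of the required bracket relations degenerate. Everything after that is a direct appeal to Proposition~\ref{symtrip} and the identification of the $\MSL$-geometry of $\mathfrak{sp}_3(\MF)$ (and $\mathfrak{psp}_3(\MF)$) with a symplectic plane from Section~\ref{sec:sympl}.
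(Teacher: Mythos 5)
Your proposal is correct and follows essentially the same route as the paper: both extract a symplectic triple from the two intersecting lines (the paper simply asserts its existence, you supply the details via condition (B) and the uniqueness of lines through two points) and then invoke Proposition~\ref{symtrip} together with the identification of the extremal points of $\mathfrak{sp}_3(\MF)$ and $\mathfrak{psp}_3(\MF)$ from Example~\ref{exam4}. If anything, your write-up is more careful than the paper's about why the third point lies outside the first $\mathfrak{sl}_2$ and why the generated subspace is exactly the point set of the subalgebra.
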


\begin{proof}
Consider  two intersecting $\MSL$-lines $l$ and $m$ inside $\E$. 
There exists an intersection point $z\in\mathcal{E}$ with $z\in l\cap m$ and we can find points $x\in l$ and $y\in m$ such that $(x,z,y)$ is a symplectic triple.

We have seen in ~\ref{symtrip} that a symplectic triple in a Lie algebra $\MG$ generates a $\mathfrak{sp}_3(\MF)$ subalgebra or its central quotient $\mathfrak{psp}_3(\MF)$. Moreover, the extremal elements in $x$, $y$ and $z$ are mapped
to scalar multiples of pure tensors.
As the extremal elements not in the center of $\mathfrak{sp}_3(\MF)$ or
 $\mathfrak{psp}_3(\MF)$ are scalar multiples of pure tensors,
see Example ~\ref{exam4},  we find that $x$, $y$ and $z$ generate a subspace of $\Gamma$ isomorphic to a symplectic plane and contained in $\E$.
\end{proof}

\begin{cor}\label{perpintersectsline}
If $x$ is an extremal point and $\ell$ and $\mathfrak{sl}_2$-line inside $\E$,
then $\ell$ meets $x^\perp$ in a point or is contained in $x^\perp$.  
\end{cor}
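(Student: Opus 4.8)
The plan is to split on whether $\ell\subseteq x^\perp$; if so there is nothing to prove, so assume $\ell\not\subseteq x^\perp$ and show that then $\ell\cap x^\perp$ is a single point. First I establish the bound $|\ell\cap x^\perp|\le 1$. The key fact is that two distinct extremal points on one $\MSL$-line never commute: inside the subalgebra $\g_\ell\cong\MSL(\MF)$ carrying $\ell$ the extremal elements are exactly the nonzero rank-one nilpotents (by Proposition~\ref{sl2lines}), and two non-proportional such elements cannot commute, since a two-dimensional subspace of $\MSL(\MF)$ consisting of nilpotents would be a projective line inside the nilpotent cone of $\MSL(\MF)$, which is impossible as that cone is a nondegenerate conic in characteristic $\neq 2$ (this can also be read off from Propositions~\ref{2generators} and~\ref{sl2lines}). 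Granting this, suppose $u_1\neq u_2$ both lie in $\ell\cap x^\perp$. Then $[u_1,u_2]\neq 0$, so by Proposition~\ref{sl2lines} the points $u_1,u_2$ already generate $\g_\ell$. The centralizer $C_\g(x)=\{w\in\g\mid[x,w]=0\}$ is a subalgebra of $\g$ (immediate from the Jacobi identity) that contains $u_1$ and $u_2$, hence all of $\g_\ell$; so every extremal element of $\g_\ell$, i.e.\ every point of $\ell$, lies in $x^\perp$, contradicting $\ell\not\subseteq x^\perp$.

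It remains to exhibit one point of $\ell$ in $x^\perp$. If $x$ lies on $\ell$ this is clear, so assume $x\notin\ell$ and choose a point $y\in\ell$ with $[x,y]\neq 0$. I would proceed geometrically, following the preceding proposition: by~(A) the points $x,y$ generate an $\MSL$, hence an $\MSL$-line $m$ through $x$ and $y$, and $m\subseteq\E$ by~(C) since $m$ meets $\E$ in the two points $x,y$. As $x\in m\setminus\ell$, the lines $\ell$ and $m$ are distinct and meet in $y$, so Proposition~\ref{symplecticplane} gives that $\ell$ and $m$ generate a subspace $\pi$ of $\Gamma$ isomorphic to a symplectic plane $HSp(W,f|_W)$ with $\dim W=3$, and both $x$ and $\ell$ lie in $\pi$. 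In that model the perp of a point $\langle v\rangle$ consists of the points $\langle w\rangle$ with $w\in v^{\perp_f}$, $w\notin\rad(f|_W)$, where $v^{\perp_f}$ is a two-dimensional subspace of $W$ containing the one-dimensional radical, whereas $\ell$ corresponds to a hyperbolic two-dimensional subspace $H$, which does not contain the radical; two distinct two-dimensional subspaces of the three-dimensional $W$ meet in exactly one point, and as $H$ avoids the radical that point is an honest point of $\pi$. Hence $\ell$ meets $x^\perp$, and by the first paragraph in exactly one point. (A quicker algebraic substitute for this last step: apply~(B) to the two generators of $\g_\ell$ and to $x$ to get directly an extremal point of $\ell$ commuting with $x$.)

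The two small verifications I skipped — that distinct points on an $\MSL$-line do not commute, and that two distinct planes in a three-space meet in a single point, which is the desired point of the symplectic plane — are routine. The one essential ingredient is the existence step: it is exactly where the hypotheses~(B)/(C) (equivalently, Propositions~\ref{symtrip}--\ref{symplecticplane}) enter, and they are indispensable, because the alternative ``$\ell\subseteq x^\perp$'' in the statement genuinely occurs: it happens precisely when $x\notin\ell$ but $x$ commutes with every point of $\ell$, e.g.\ in $\mathfrak{fsp}(V,f)$ with $\dim V$ large enough that the hyperbolic plane carrying $\ell$ can be taken inside $v^{\perp_f}$, where $x=v\otimes f_v$.
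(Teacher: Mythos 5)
Your proof is correct and, in its existence step, follows exactly the paper's route: pick $y\in\ell$ with $[x,y]\neq 0$, form the $\MSL$-line $m$ on $x$ and $y$, pass to the symplectic (dual affine) plane generated by $\ell$ and $m$ via Proposition~\ref{symplecticplane}, and locate there the unique point of $\ell$ in $x^\perp$ --- which is the paper's entire two-sentence argument. Your separate global uniqueness argument via the centralizer subalgebra and your alternative existence step via hypothesis (B) are both valid, but they only elaborate what the dual affine plane structure already yields.
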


\begin{proof}
If $\ell$ is not contained in $x^\perp$, there is an $\mathfrak{sl}_2$-line $m$ on $x$ meeting $\ell$. Inside the symplectic  plane spanned by $m$ and $\ell$ we find a unique point on $\ell$ inside $x^\perp$.
\end{proof}

\begin{lem}\label{nondegen}
If $\mathrm{rad}(g)=\{0\}$, then  $\E$ is nondegenerate.
\end{lem}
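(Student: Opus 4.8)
The plan is to prove the contrapositive-style statement directly: assuming $\rad(g) = \{0\}$, verify the two defining conditions of nondegeneracy for the subspace $\E$, namely that $\E$ is connected and that distinct points of $\E$ have distinct perps within $\E$. Connectedness is already assumed in condition (C), so the real content is the separation property. Suppose $x, y \in \E$ are distinct extremal points with $x^\perp \cap \E = y^\perp \cap \E$; I want to derive a contradiction with $\rad(g) = \{0\}$, most naturally by showing that $x$ (or a suitable combination) lies in $\rad(g)$, or by exhibiting an extremal point in $\E$ that is perpendicular to one of $x, y$ but not the other.

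First I would fix extremal elements $x_1 \in x$ and $y_1 \in y$ and consider $g(x_1, -)$ and $g(y_1, -)$ as linear functionals on $\g$. Since $\E$ linearly spans $\g$ by condition (C), it suffices to understand these functionals on the spanning set $E$ of extremal elements. The key observation should be this: for an extremal element $w$ spanning a point $w \in \E$, we have $g(x_1, w) = 0$ if and only if $[x_1, w] = 0$, i.e. $w \in x^\perp$ — but this equivalence needs care, since $g(x_1,w)=0$ together with $[x_1,w]\neq 0$ is exactly case (d) of Proposition~\ref{2generators}, which produces an extremal line, contradicting condition (A). So under our hypotheses, for extremal $w$, $w \perp x \iff g(x_1, w) = 0$, and likewise for $y$. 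Hence $x^\perp \cap \E = y^\perp \cap \E$ forces $g(x_1, w) = 0 \iff g(y_1, w) = 0$ for all extremal $w$ spanning points of $\E$.

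Next I would upgrade this "same zero set" statement to proportionality of the functionals. The subtlety is that $g$ restricted to the line $\langle x_1, y_1 \rangle$ (if $x \ne y$ and they are collinear or perpendicular) behaves differently in the various cases of Proposition~\ref{2generators}, so I would split according to whether $[x_1, y_1] = 0$ or $\langle x_1, y_1\rangle \cong \MSL$. In the $\MSL$ case, $g(x_1, y_1) \ne 0$ by Proposition~\ref{sl2lines}, so $x \notin y^\perp$, contradicting $x^\perp \cap \E = y^\perp\cap \E$ would require $x \in x^\perp$, which is false for pure extremal $x_1$ (since $g(x_1,x_1)\neq 0$ as $x_1$ is pure, by Proposition~\ref{extremalform} applied with $y=$ something giving a nonzero bracket — actually $x\in x^\perp$ literally means $[x_1,x_1]=0$ which is trivially true, so I need $x \perp y$ or the collinear case to be the only live possibility). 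So the genuine case is $[x_1, y_1] = 0$, i.e. $x \perp y$; then $x \in x^\perp \cap \E$ but we need $x \in y^\perp$, which holds, and symmetrically. Here I would argue that the functionals $g(x_1,-)$ and $g(y_1,-)$, having the same kernel on the spanning set, must be scalar multiples; normalizing, $g(x_1 - \mu y_1, -) = 0$ on all of $\E$, hence on all of $\g$, so $x_1 - \mu y_1 \in \rad(g) = \{0\}$, forcing $x = y$, the desired contradiction.

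The main obstacle I anticipate is justifying the passage from "the two functionals vanish on the same extremal points of $\E$" to "they are proportional." Vanishing on the same elements of a spanning set does not in general imply proportionality (that is a statement about hyperplanes, not arbitrary subsets), so I would need to exploit the geometric structure — specifically the abundance of symplectic triples and symplectic planes guaranteed by Propositions~\ref{symtrip} and \ref{symplecticplane}, and Corollary~\ref{perpintersectsline} — to show that within each symplectic plane through $x$ and $y$ the two perp-conditions coincide forces a local proportionality, and that these local scalars are globally consistent because $\E$ is connected. An alternative, possibly cleaner route: use Corollary~\ref{perpintersectsline} to find, for any extremal point $w \not\perp x$, the unique point of any $\MSL$-line through $w$ and $x$ lying in $x^\perp$; comparing with the corresponding point in $y^\perp$ inside a common symplectic plane should directly show that a nontrivial combination $\alpha x_1 + \beta y_1$ is perpendicular to everything in $\E$, landing in $\rad(g)$. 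I would pursue whichever of these makes the connectedness bookkeeping lightest, and I expect the write-up to hinge on one careful argument inside a single symplectic plane followed by a short connectivity propagation.
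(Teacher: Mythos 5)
Your overall strategy is the right one --- reduce to the situation $x\perp y$ and aim to place a combination $x_0-\mu y_0$ in $\rad(g)=\{0\}$ --- and your observation that condition (A) forces $w\perp x\iff g(x_1,w)=0$ for extremal $w$ is correct. But the proposal stops exactly at the decisive step: you explicitly flag that vanishing on the same subset of a spanning set does not give proportionality of the functionals $g(x_1,-)$ and $g(y_1,-)$, and you only sketch two possible ways around this (``local proportionality in each symplectic plane plus connectivity propagation of the scalars'', or an argument via Corollary~\ref{perpintersectsline}) without carrying either out. Moreover, your second sketch misapplies Corollary~\ref{perpintersectsline}: the unique point of an $\MSL$-line through $x$ and $w$ lying in $x^\perp$ is $x$ itself (the tangent point), so that construction yields nothing new.

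The paper closes the gap without any propagation of local scalars. Fix one extremal point $z\in\E$ with $z\not\perp x$ and rescale representatives so that $g(x_0,z_0)=g(y_0,z_0)$; this single normalization replaces your search for the scalar $\mu$. Then for an arbitrary extremal $v_0$: if $v\perp x$ one gets $g(x_0-y_0,v_0)=0$ for free; if $v\not\perp x$ and $v\not\perp z$, Corollary~\ref{perpintersectsline} applied to the line $\langle z,v\rangle$ (not to a line through $x$) produces $u\in\langle z,v\rangle$ with $u\in x^\perp=y^\perp$, so that $v_0=\alpha u_0+\beta z_0+\gamma[u_0,z_0]$, and the \emph{associativity} of $g$ kills the last term via $g(x_0,[u_0,z_0])=g([x_0,u_0],z_0)=0$; the residual case $v\in z^\perp\setminus x^\perp$ is handled by writing $v_0$ as a linear combination of points of the symplectic plane on $x,z,v$ that avoid $z^\perp$. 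Associativity of the extremal form is the ingredient your plan never invokes, and it is precisely what makes the ``upgrade to proportionality'' unnecessary. As written, your argument is a plausible outline with the hardest step acknowledged but left open, not a proof.
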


\begin{proof}
By assumption $\E$ is connected.
Now consider two extremal points $x,y\in \ME$ with  $x^{\perp}\cap \E=y^{\perp}\cap \E$
and an extremal point $z\in \E$ not in $x^\perp$.
We can find nonzero extremal elements $x_0$, $y_0$ and $z_0$ in $x$, $y$, and $z$ with $g(x_0, z_0)=g( y_0, z_0)$. It follows
\[g(x_0-y_0, z_0)=0.\]

Now suppose $v$ is an arbitrary extremal point in $\E$ and $0\neq v_0\in v$.

If $v\in x^\perp$, then 
\[g(x_0- y_0, v_0)=0.\]

If $v$ is not in $x^\perp$ nor in $z^\perp$, 
then, by Corollary \ref{perpintersectsline}, there is an extremal point $u\subseteq \langle z,v\rangle$ with $u\in x^\perp=y^\perp$.
But then $v_0$ is in the linear span
of $u_0$, $z_0$ and $[u_0,z_0]$, where $0\neq u_0\in u$.
So there are $\alpha,\beta,\gamma\in \mathbb{F}$ with $v_0=\alpha u_0+\beta z_0+\gamma [u_0,z_0]$. Using associativity of $g$ we find 
\[
\begin{array}{rl}
g( x_0- y_0, v_0)
=&g(x_0-y_0,\alpha u_0+\beta z_0+\gamma [u_0,z_0])\\
=&\gamma g([x_0,u_0],z_0)-\gamma g([y_0,u_0],z_0)\\
=&0.
\end{array}\]

It remains to consider the case where $v\in z^\perp$ but not in $x^\perp$.
But then $x,z$ and $v$ are contained in a symplectic plane
all whose points $w$ not in $z^\perp$ satisfy $g(x_0-y_0,w)=0$.
As $v$ is contained in the linear span of these points, we also find
in this case that 
$g(x_0- y_0, v_0)=0$

So, for each extremal element $v_0$ in an extremal point $v$, it follows that $g(x_0 -y_0,v_0)=0$. As, $\g$ is spanned by its extremal elements, we find $x_0- y_0\in \rad(g)=\{0\}$, and we conclude that 
$x_0=y_0$ and $x =y$. 
\end{proof}
 
Geometries in which any two intersecting lines generate a subspace isomorphic to a symplectic plane  have been studied by Cuypers in \cite{Cuy91}.
Using  the main result of \cite{Cuy91},  we obtain the following.

\begin{satz}\label{PhiThm} 
The subspace $\Gamma(\E)$ of $\Gamma$ is isomorphic to the geometry $HSp(V,f)$
of hyperbolic lines of a symplectic space $(V,f)$ over $\MF$. This isomorphism is denoted by
\begin{align*}\label{isophi} \varphi : \Gamma(\E)\overset{\cong}\longrightarrow HSp(V,f).
\end{align*}

The form $f$ is nondegenerate if $\mathrm{rad}(g)=\{0\}$.
\end{satz}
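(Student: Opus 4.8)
The plan is to invoke the characterization theorem of Cuypers \cite{Cuy91} for geometries in which any two intersecting lines generate a symplectic plane. The work that remains is therefore to verify that $\Gamma(\E)$ satisfies the hypotheses of that theorem, and then to transport the conclusion back into the Lie-algebraic language, in particular to relate the radical of $g$ to the (non)degeneracy of the resulting symplectic form $f$.

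First I would record the geometric axioms that $\Gamma(\E)$ enjoys. By condition (C) the subspace $\E$ is connected, and by Lemma~\ref{sl2lines} any two points lie on at most one line, so $\Gamma(\E)$ is a partial linear space. The key local axiom, that any two intersecting $\MSL$-lines inside $\E$ generate a subspace isomorphic to a symplectic plane, is exactly Proposition~\ref{symplecticplane}. One also needs every line to carry enough points; since an $\MSL$-line is, by Proposition~\ref{sl2lines} and Remark~\ref{oval}, a conic (hence an oval) in a projective plane over $\MF$, it has at least three points when $|\MF|\geq 2$, which holds as $\chara\MF\neq 2$. Together with the no-triangle/exchange behaviour forced inside symplectic planes, these are precisely the conditions under which \cite{Cuy91} identifies the geometry: its main theorem asserts that a connected partial linear space all of whose pairs of intersecting lines span a symplectic plane is isomorphic to $HSp(V,f)$ for some symplectic space $(V,f)$ over $\MF$. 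Applying that result produces the isomorphism $\varphi:\Gamma(\E)\overset{\cong}\longrightarrow HSp(V,f)$.

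For the last sentence, suppose $\rad(g)=\{0\}$. Then by Lemma~\ref{nondegen} the subspace $\E$ is nondegenerate, meaning that for $x,y\in\E$ the equality $x^\perp\cap\E=y^\perp\cap\E$ forces $x=y$. Under $\varphi$ the collinearity-perpendicularity relation $\perp$ on $\widehat\E$ corresponds to the relation ``spanning a non-hyperbolic (degenerate) $2$-space'' in $HSp(V,f)$, i.e. to $f$-orthogonality of the corresponding $1$-spaces; so nondegeneracy of $\E$ translates into: distinct points of $HSp(V,f)$ have distinct orthogonal-point sets. If $f$ were degenerate with radical $R\neq 0$, pick $0\neq r\in R$ and any $1$-space $\langle v\rangle$ outside $R$; then $\langle v\rangle$ and $\langle v+r\rangle$ are distinct points of $HSp(V,f)$ (both lie outside $\rad f$), yet $f(v,\cdot)=f(v+r,\cdot)$, so they have the same orthogonal points, contradicting nondegeneracy. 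Hence $f$ is nondegenerate, as claimed.

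The main obstacle is the precise matching of axioms: one must confirm that the hypotheses in \cite{Cuy91} are literally the ones we have verified — in particular that no extra regularity (thickness of lines beyond three points, or a richness/connectedness condition on the collinearity graph) is being silently used — and that the notion of ``symplectic plane'' there coincides with the dual affine plane $HSp(W,f|_W)$ arising here from $\mathfrak{sp}_3(\MF)$ and $\mathfrak{psp}_3(\MF)$ as described at the end of Section~\ref{sec:sympl}. Once that dictionary is pinned down, the identification of $\perp$ with $f$-orthogonality (so that the radical statement follows) is a routine check internal to $HSp(V,f)$.
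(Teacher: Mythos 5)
Your proposal matches the paper's own proof: both verify that $\Gamma(\E)$ is a connected partial linear space in which intersecting lines generate symplectic planes (Proposition~\ref{symplecticplane}), check line thickness from $\chara\MF\neq 2$, invoke the main theorem of \cite{Cuy91}, and coordinatize over $\MF$. The only differences are minor: the paper separately dispatches the case of a single line and insists on lines with \emph{more} than three points (using $|\MF|\geq 3$), while your explicit derivation of the nondegeneracy of $f$ from Lemma~\ref{nondegen} correctly spells out a step the paper leaves implicit.
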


\begin{proof}
If $\Gamma(\E)$ contains a single line, then $\MG$ is isomorphic to
$\mathfrak{sl}_2(\MF)$ and $\Gamma$ is isomorphic to $HSp(2,\MF)$.

So, assume that $\Gamma(\E)$ contains at least two lines.
By assumption, $\Gamma(\E)$ is connected. By \ref{symplecticplane}, we moreover know that in $\Gamma(\E)$ two points are on at most one line, and any pair of intersecting lines is contained in a symplectic plane. Our assumption that $\mathbb{F}$ is not of characteristic $2$ implies that $|\mathbb{F}|\geq 3$ and hence guarantees, by Proposition \ref{sl2lines}, that we have a line with more than $3$ points in $\mathcal{L}_\E$. Then the main result of \cite{Cuy91} can be applied 
and we conclude that $\Gamma(\E)$ is isomorphic to the geometry of hyperbolic lines in a symplectic space. 

As each symplectic plane can be coordinatized by the commutative field $\MF$,
we obtain that $\Gamma(\E)$ is isomorphic to   $HSp(V,f)$ for some symplectic space over $\MF$.
\end{proof}

Concretely, the geometric structure of $HSp(V,f)$  translates to the following:

Let $x,y\in \E$ and  $ \varphi(x)=p, \varphi(y)=q$ be distinct points and $x_1\in x$ and $y_1\in y$ nontrivial extremal elements, then
\begin{align*}
p,q &\text{ are on a hyperbolic line in } (V,f) \Leftrightarrow x \not\perp y \Leftrightarrow g(x_1,y_1)\neq 0.
\end{align*}
So the hyperbolic lines in $HSp(V,f)$ are the lines obtained from the $\mathfrak{sl}_2\text{-lines}$ in $(\ME, \mathcal{L}_\E)$.
The second type of lines in the symplectic space $(V,f)$, the singular lines, correspond to commuting extremal points. Indeed, for two distinct points $ \varphi(x)=p, \varphi(y)=q$ and $x_1\in x$ and $y_1\in y$ nontrivial extremal elements, we have
\begin{align*}
p,q &\text{ are on a singular line in } (V,f) \Leftrightarrow x\perp y\Leftrightarrow g(x_1,y_1)= 0.
\end{align*}

For later use, we need a name  for the equivalent of the singular lines in the symplectic geometry for elements in $\ME$.

\begin{remark} \label{pollines}
Suppose $\E$ is nondegenerate and $x\neq y\in \ME$ with $x\perp y$. 
Then the {\em polar line} through  $x$ and $y$ is the set $(\{x,y\}^{\perp})^{\perp}$. 
The set of all polar lines is denoted by $\mathcal{P}_\E$. By $\mathbb{P}(\ME)$ we denote the point-line space $(\ME, \mathcal{L}_\E \cup \mathcal{P}_\E)$.

With the previous construction and the result of Theorem \ref{PhiThm}, we find,
in case $\E$ is nondegenerate, that the isomorphism $\varphi$ extends uniquely to an isomorphim
\[\hat\varphi: \mathbb{P}(\ME)\overset{\cong}\longrightarrow \MP(V).\]
\end{remark}


\section{The uniqueness of the Lie product}
 \label{sec:unique}

Consider a Lie algebra $\MG$ over the field $\mathbb{F}$ of characteristic not $2$, generated by its set $\widehat\ME$ 
of extremal points,  and with  extremal form $g$. 
 
As in the previous section, assume that $\E$ is a subset of $\widehat\E$
such that
\begin{enumerate}[(A)]
\item any two elements $x,y\in \E$ commute or generate a subalgebra of $\g$ isomorphic to $\MSL(\mathbb{F})$;
\item for any three elements $x,y,z\in \E$ with $[x,y]\neq 0$, there is an
element $u\in \E$ contained in $\langle x,y\rangle$ commuting with $z$.
\item $\E$ is a  connected subspace of $\Gamma$ which linearly spans $\g$.
We additionally assume that $\E$ is nondegenerate.  
\end{enumerate}

The goal of this section is to show that, up to a scalar, the Lie product
$[\cdot,\cdot]$ of $\mathfrak{g}$ is the unique Lie product on the vector space $\mathfrak{g}$, whose $\MSL$-geometry on $\E$ coincides with that of $\mathfrak{g}$.

So assume $[\cdot,\cdot]_1$ is a second Lie product on the vector space 
$\mathfrak{g}$.
Let $\g_1$ be the Lie algebra defined by $[\cdot,\cdot]_1$.

We assume that 
\begin{enumerate}[(a)]
\item the elements of $E$ are extremal with respect to $[\cdot,\cdot]_1$;
\item for $x,y\in E$ we have  $[x,y]=0\Leftrightarrow [x,y]_1=0$;
\item $\E$ is a subspace of $\Gamma(\g_1)$.
\end{enumerate}

Our goal is now to prove that $[\cdot,\cdot]_1$ is a scalar multiple of $[\cdot,\cdot]$.

\begin{lem}
Let $L$ be an $\MSL$-line of $\Gamma$ contained in $\E$, then $L$ is also
an $\MSL$-line in $\Gamma(\g_1)$.
\end{lem}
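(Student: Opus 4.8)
The plan is to show that an $\MSL$-line $L \subseteq \E$ of $\Gamma(\g)$ consists of extremal points of $\g_1$ that pairwise generate an $\MSL$ under $[\cdot,\cdot]_1$, and that $L$ is precisely the set of extremal points of $\g_1$ inside the subalgebra they generate. First I would pick two distinct points $x, y \in L$. By definition of an $\MSL$-line of $\Gamma(\g)$, the elements $x$ and $y$ do not commute with respect to $[\cdot,\cdot]$, so by assumption (b) they do not commute with respect to $[\cdot,\cdot]_1$ either; hence by assumption (a) and condition (A) (applied in $\g_1$, or simply by Proposition~\ref{2generators} since $x,y$ are $[\cdot,\cdot]_1$-extremal and non-commuting), the subalgebra $\langle x, y\rangle_1$ of $\g_1$ generated by $x$ and $y$ is isomorphic to $\MSL(\MF)$, and by assumption (c) it determines an $\MSL$-line $L_1$ of $\Gamma(\g_1)$ through $x$ and $y$.

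Next I would argue that $L = L_1$ as point sets. One inclusion: since $\E$ is a subspace of $\Gamma(\g_1)$ and $x,y \in \E$, the whole line $L_1$ through $x$ and $y$ lies in $\E$; I claim $L_1 \subseteq L$. By Proposition~\ref{sl2lines}, every point of $L_1$ is a $[\cdot,\cdot]_1$-extremal point in $\langle x,y\rangle_1$. The key observation is that $\langle x,y\rangle_1$ coincides, as a subspace of the vector space $\g$, with the $3$-dimensional subspace $\langle x, y\rangle$ of $\g$: both are spanned by (lifts of) $x$, $y$, and the respective bracket of two non-commuting extremal elements lifting $x,y$ — and in an $\MSL$ the underlying vector space is recovered as the span of any two non-commuting extremal elements together with their bracket, while dimension $3$ is forced in both cases. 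To make this precise one uses assumptions (a) and (b): the extremal/commuting structure on $E \cap \langle x,y\rangle$ is the same for both brackets, so the set of extremal points in this $3$-space that belongs to $\E$ is the same oval (conic) in both geometries, by Remark~\ref{oval} applied on each side. Thus $L = L_1$.

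The step I expect to be the main obstacle is identifying the two subalgebras $\langle x,y\rangle$ and $\langle x,y\rangle_1$ as the same subset of the vector space $\g$, i.e.\ showing that the second bracket does not enlarge this span beyond three dimensions and that it genuinely lands in the same $3$-space. The cleanest route is: fix non-zero $x_0 \in x$, $y_0 \in y$; then $[x_0,y_0]$ and $[x_0,y_0]_1$ are both non-zero (by (b)) and both are extremal elements (for the respective brackets) spanning the third point of the respective line; since by (c) the $\MSL$-line $L_1$ lies in $\E$, Theorem~\ref{PhiThm} forces $L_1$ to be a hyperbolic line of the fixed symplectic space $(V,f)$ attached to $\E$, and the same is true of $L$, so $L$ and $L_1$ are both hyperbolic lines through $\varphi(x)$ and $\varphi(y)$ — and in $HSp(V,f)$ there is at most one such, giving $L = L_1$ directly. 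This is in fact the shortest argument and I would use it: it sidesteps the linear-algebra identification entirely by invoking the already-established geometry.
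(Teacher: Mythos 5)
There is a genuine gap: neither of your two routes actually establishes the inclusion $L_1\subseteq L$, and the one you settle on is circular. Theorem \ref{PhiThm} is a statement about $\Gamma(\g)$ only: it identifies the $\MSL$-lines of $\g$ inside $\E$ with the hyperbolic lines of $(V,f)$, and it says nothing about the $\MSL$-lines of $\Gamma(\g_1)$. Asserting that it ``forces $L_1$ to be a hyperbolic line of the fixed symplectic space $(V,f)$'' is therefore exactly the claim $L_1=L$ you are trying to prove; a subset of $\E$ whose points are pairwise non-perpendicular need not lie on a hyperbolic line of $(V,f)$, and nothing in hypotheses (a)--(c) on $[\cdot,\cdot]_1$ lets you read $L_1$ off inside $HSp(V,f)$ directly. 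Your first route stalls at precisely the step you flag yourself: a priori $[x_0,y_0]_1$ need not lie in the $3$-space $\langle x_0,y_0,[x_0,y_0]\rangle$, so $\langle x,y\rangle$ and $\langle x,y\rangle_1$ could be two different $3$-spaces meeting only in $\langle x_0,y_0\rangle$, and the appeal to Remark \ref{oval} (``the same oval in both geometries'') presupposes that they coincide. (A minor but real error along the way: $[x_0,y_0]$ is not extremal and spans no point of $L$; in Proposition \ref{elementsinsl2} the choice $\alpha=\beta=0$ forces $\gamma=0$.)

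The missing idea, and the way the paper argues, is a description of $L$ that is manifestly independent of which bracket is used: restricted to $\E$ one has $L=\{x,y\}^{\perp\perp}$, and by hypothesis (b) the relation $\perp$ on $\E$ is the same for $[\cdot,\cdot]$ and $[\cdot,\cdot]_1$. Every point of the $\MSL$-line $M$ of $\Gamma(\g_1)$ on $x$ and $y$ lies in $\E$ (hypothesis (c)) and commutes, for $[\cdot,\cdot]_1$, with everything in $\{x,y\}^{\perp_1}$, so $M\subseteq\{x,y\}^{\perp_1\perp_1}\cap\E=\{x,y\}^{\perp\perp}\cap\E=L$. Then $\langle M\rangle\subseteq\langle L\rangle$ are both $3$-dimensional, hence equal, and every point of $L$, being in $\E$ and hence extremal for $[\cdot,\cdot]_1$ by hypothesis (a), is an extremal point of $\g_1$ inside $\langle x,y\rangle_1=\langle M\rangle$, i.e.\ lies on $M$ by Proposition \ref{sl2lines}. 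If you want to salvage your linear-algebra route, this double-perp step is also the cleanest way to obtain the identification $\langle x,y\rangle_1=\langle x,y\rangle$ that your argument needs.
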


\begin{proof}
Let $x,y\in\E$ be two points on the line $L$ of $\Gamma$.
Then, restricted to $\E$, we find $L=\{x,y\}^{\perp\perp}$.

But since the perp relation $\perp$ on $\E$ is the same for both
$[\cdot,\cdot]$ and $[\cdot,\cdot]_1$, the $\MSL$-line $M$ on $x$ and $y$
in $\Gamma(\g_1)$ is contained in $L$.  
But then the linear subspaces spanned by  $L$ and $M$ in $\g$ are the same,
implying that $L$ and $M$ are equal. See \ref{sl2lines}.
\end{proof}

\begin{lem}\label{hyppairs}
Let $x,y\in E$ be noncommuting and  
generating a subalgebra $\mathfrak{h}$ of $\MG$. 
Then  there is a $\gamma\in \MF^*$ such that for all
$v,w\in \mathfrak{h}$ we have $[v,w]_1=\gamma[v,w]$.
\end{lem}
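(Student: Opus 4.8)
The plan is to exploit the concrete structure of $\mathfrak{h}\cong\MSL(\MF)$ together with the hypothesis that $E$ consists of extremal elements for \emph{both} products and that the perp-relation is unchanged. First I would normalize: since $x,y$ are noncommuting extremal elements of $\mathfrak{h}$, by \ref{sl2lines} the elements $g(x,y)x,\ y,\ [x,y]$ form a basis of $\mathfrak{h}$, and by hypothesis (b) also $[x,y]_1\neq 0$, so $z_1:=[x,y]_1$ is a nonzero element of $\mathfrak{h}$ (using hypothesis (c), that $\E$ and hence the $\MSL$-line $L$ through $x,y$ is a subspace of $\Gamma(\g_1)$, together with the previous lemma, which tells us the $\MSL$-line through $x,y$ in $\g_1$ is the same set $L$ spanning the same $2$-space of points, hence $\langle x,y\rangle_1=\mathfrak h$ as a vector space). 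Define $\gamma\in\MF^*$ by $[x,y]_1=\gamma[x,y]$.

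Next I would pin down the rest of the multiplication table. The key observation is that the extremal form controls brackets: for an extremal $x$ and any $v$, $[x,[x,v]]=2g(x,v)x$, and the analogous identity holds for $[\cdot,\cdot]_1$ with its own extremal form $g_1$. Applying this with $v=y$ gives $[x,[x,y]]=2g(x,y)x$ and $[x,[x,y]_1]_1=2g_1(x,y)x$; but $[x,[x,y]_1]_1=\gamma[x,[x,y]]_1$, and since $[x,y]\in\mathfrak h$ I can express $[x,[x,y]]_1$ back in the basis. Working this through — together with the parallel computation starting from $y$ and from the extremal element $[x,y]$ itself (which is extremal by \ref{2generators}(d) applied inside $\mathfrak{sl}_2$, or directly because all nonzero vectors on an $\MSL$-line are extremal) — forces $[\cdot,\cdot]_1 = \gamma[\cdot,\cdot]$ on each pair of basis vectors, and I would also get $g_1 = \gamma^2 g$ on $\mathfrak h$ as a byproduct. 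Since both products are bilinear and $\mathfrak h$ is spanned by these basis elements, bilinearity then gives $[v,w]_1=\gamma[v,w]$ for all $v,w\in\mathfrak h$.

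The cleanest way to organize the forcing step is probably this: because $\mathfrak h\cong\MSL$ is simple and spanned by extremal elements, it is enough to check the relation on a spanning set of extremal elements; and every extremal point of $\mathfrak h$ lies on the line $L$, so for any two extremal elements $u,v\in\mathfrak h$ either $u\perp v$ (in which case $\langle u,v\rangle=\langle u\rangle$ or $\langle v\rangle$, by \ref{2generators} inside $\mathfrak{sl}_2$ commuting extremal elements are proportional, so $[u,v]=0=[u,v]_1$ trivially) or $u\not\perp v$ and they generate $\mathfrak h$ again, so I may repeat the above with $u,v$ in place of $x,y$, getting a scalar $\gamma_{u,v}$; consistency on overlapping bases (any two such bases share a common extremal element) forces all these scalars to coincide with $\gamma$. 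Then bilinearity extends to all of $\mathfrak h$.

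The main obstacle I anticipate is the bookkeeping in the forcing step: one must be careful that $[\cdot,\cdot]_1$ is only assumed \emph{a priori} to be a Lie bracket making $E$ extremal and preserving commuting pairs, so one cannot assume $\langle x,y\rangle_1$ equals $\mathfrak h$ until one has invoked the preceding lemma about $\MSL$-lines; and one must use the extremal identities for $g_1$ carefully, since the value of $\gamma$ is not known to be $1$. A secondary subtlety is verifying that $[x,y]$ itself is extremal and noncommuting with, say, $x$ — needed to run the "repeat with a different basis" argument — which follows from the conic/oval description in Remark \ref{oval} (the tangent through $\langle x\rangle$ is $\langle x,[x,y]\rangle$, so $[x,y]$ spans an extremal point of $L$ distinct from $\langle x\rangle$, hence not perpendicular to generic points of $L$). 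Once these structural points are in place, the computation is a finite, determined calculation in $\MSL$ and produces the single scalar $\gamma\in\MF^*$ as claimed.
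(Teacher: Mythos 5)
There is a genuine gap, and it sits exactly at the crux of the lemma. You write ``Define $\gamma\in\MF^*$ by $[x,y]_1=\gamma[x,y]$,'' but at that point all you know (from the preceding lemma) is that $[x,y]_1$ is a nonzero element of the $3$-dimensional space $\mathfrak{h}$; a priori $[x,y]_1=\alpha x+\beta y+\gamma[x,y]$ with $\alpha,\beta$ possibly nonzero, and showing $\alpha=\beta=0$ is the real content of the proof. This is not a formal consequence of $x,y$ being extremal and noncommuting for both brackets: transporting the $\MSL$-bracket along a linear map that fixes $x$ and $y$ but sends $[x,y]$ to $[x,y]+x$ yields a second Lie bracket for which $x,y$ are still noncommuting extremal generators, yet $[x,y]_1\notin\MF[x,y]$. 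What rules this out is that both structures must have the \emph{same} set of extremal points in $\mathfrak{h}$ (the same conic), and the paper's proof exploits exactly this: the one-parameter family of extremal elements $y+\lambda[x,y]_1+\lambda^2g_1(x,y)x$ of $\mathfrak{h}_1$ must satisfy the conic equation $\alpha\beta=\gamma^2$ of $\mathfrak{h}$ for every $\lambda\in\MF$, giving a cubic identity in $\lambda$ that forces $\alpha=\beta=0$ and, simultaneously, $g_1(x,y)=\gamma^2g(x,y)$. That last relation is not a ``byproduct'' as you suggest but an essential input: without it you cannot convert $[x,[x,y]_1]_1=2g_1(x,y)x$ into $[x,[x,y]]_1=\gamma[x,[x,y]]$, and indeed for any other value of $g_1(x,y)$ one gets a consistent $\MSL$-bracket that is not a uniform scalar multiple of $[\cdot,\cdot]$. (A repair in the same spirit: the tangent to the common conic at $\langle x\rangle$ is both $\langle x,[x,y]\rangle$ and $\langle x,[x,y]_1\rangle$, and intersecting with the tangent at $\langle y\rangle$ gives $[x,y]_1\in\MF[x,y]$; but the conic comparison is still needed for $g_1=\gamma^2 g$.)

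A second, concrete error: your ``parallel computation starting from the extremal element $[x,y]$'' rests on the false premise that $[x,y]$ is extremal. By Proposition \ref{sl2lines} the extremal elements of $\mathfrak{h}$ are $\alpha g(x,y)x+\beta y+\gamma[x,y]$ with $\alpha\beta=\gamma^2$, so $[x,y]$ (with $\alpha=\beta=0$, $\gamma=1$) is not among them --- it is the semisimple element of $\MSL$, not a nilpotent one. In Remark \ref{oval} the tangent line through $\langle x\rangle$ meets the oval \emph{only} in $\langle x\rangle$, so $\langle[x,y]\rangle$ lies on that tangent line but is not a point of $L$. Hence that leg of your forcing argument cannot be run, and the remaining extremal identities for $x$ and $y$ alone do not determine the bracket on $\mathfrak{h}$ up to a single scalar.
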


\begin{proof}
Without loss of generality we can assume $g(x,y)=1$.
The subalgebra $\mathfrak{h}$ of $\g$ is isomorphic to $\mathfrak{sl}_2$.
Its extremal points are the $1$-spaces spanned by
elements $a x+b y+c [x,y]$ satisfying the equation $ab=c^2$. 
See \ref{sl2lines}.

Also inside $\g_1$ 
the elements $x$ and $y$ generate a
subalgebra $\mathfrak{h}_1$ isomorphic to $\MSL$. Moreover, as follows from the above lemma, the linear subspaces
$\mathfrak{h}$ and $\mathfrak{h}_1$ are equal.
So
\[ [x,y]_1=\alpha x+ \beta y+\gamma [x,y]
\]
for some fixed $\alpha,\beta$ and $\gamma\neq 0$ in $\MF$.
But that implies that the extremal elements \[ y+\lambda [x,y]_1+\lambda^2 g_1(x,y) x=
(\lambda^2g_1(x,y) + \lambda \beta)x+( 1+\lambda\alpha)y+\lambda^2\gamma^2[x,y]
\] 
of $\mathfrak{h}_1$, are also extremal in $\mathfrak{h}$ 
and hence the equation
\[ 
 ( 1+\lambda\alpha)(\lambda^2g_1(x,y) + \lambda \beta) = \lambda^2\gamma^2
\]
is satisfied for each $\lambda\in \mathbb{F}$.

If the field $\mathbb{F}$ contains $3$ elements, then we easily find $\alpha=\beta=0$. If the field contains more than $3$ elements, then the fact 
that the  equation is
a cubic equation in $\lambda$ with more than $3$ solutions implies immediately that $\alpha=\beta=0$.
But that implies that for all $v,w\in \mathfrak{h}$ we have $[v,w]_1=\gamma[v,w]$.
\end{proof}

\begin{lem}\label{symptripproduct}
Let $(x,y,z)$ be a symplectic triple in $\ME$
generating  a subalgebra $\mathfrak{s}$ of $\MG$ isomorphic to
$\mathfrak{(p)sp}_3(\MF)$.

Then there is a scalar $\gamma\in \MF^*$
such that for any two elements $v,w\in \mathfrak{s}$ we have
$[v,w]_1=\gamma [v,w]$.  
\end{lem}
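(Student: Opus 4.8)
The plan is to bootstrap from Lemma \ref{hyppairs}, which already handles a single $\MSL$-subalgebra. Since a symplectic triple $(x,y,z)$ generates $\mathfrak s\cong \mathfrak{(p)sp}_3(\MF)$, and $\mathfrak s$ is spanned by the six (resp. five) elements $x,y,z,[x,y],[y,z],[x,[y,z]]$ as computed in the proof of Proposition \ref{symtrip}, it suffices to pin down the scaling constant on enough noncommuting pairs to force a single global constant $\gamma$. First I would apply Lemma \ref{hyppairs} to the pair $(x,y)$ to obtain a constant $\gamma_1\in\MF^*$ with $[v,w]_1=\gamma_1[v,w]$ for all $v,w$ in the subalgebra $\langle x,y\rangle$, and likewise to $(y,z)$ to get a constant $\gamma_2$ on $\langle y,z\rangle$. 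The two subalgebras $\langle x,y\rangle$ and $\langle y,z\rangle$ overlap in the $\MSL$-line containing $y$ (they share $y$, and inside $\mathfrak s$ they share more: both contain $y$ and elements built from it), so on that common part the two constants must agree, giving $\gamma_1=\gamma_2=:\gamma$.

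Next I would show the constant propagates to all of $\mathfrak s$. The key observation is that $\mathfrak s$ is generated as a Lie algebra by the $\MSL$-lines it contains, and by Proposition \ref{symplecticplane} any two intersecting $\MSL$-lines inside $\E$ span a symplectic plane; moreover any two points of the symplectic plane on $\E$ that are non-perpendicular lie on an $\MSL$-line of $\Gamma$, to which Lemma \ref{hyppairs} applies with its own constant. Because the symplectic plane is connected and any two of its $\MSL$-lines are joined by a chain of pairwise-intersecting $\MSL$-lines, all these local constants coincide with $\gamma$. Since every basis vector $x,y,z,[x,y],[y,z],[x,[y,z]]$ of $\mathfrak s$ lies on an $\MSL$-line of $\Gamma$ through one of $x,y,z$ — e.g.\ $[x,y]$ spans an extremal point on the $\MSL$-line $\langle x,y\rangle$, and $[x,[y,z]]$ can be reached through the symplectic plane generated by the lines through $z$ — the relation $[v,w]_1=\gamma[v,w]$ holds whenever $v$ and $w$ are among these spanning extremal elements.

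Finally, I would conclude by bilinearity: both $[\cdot,\cdot]$ and $[\cdot,\cdot]_1$ are bilinear on $\mathfrak s$, and they agree up to the fixed scalar $\gamma$ on a spanning set of $\mathfrak s$, hence $[v,w]_1=\gamma[v,w]$ for all $v,w\in\mathfrak s$. One subtlety to address in the $\mathfrak{psp}_3$ case is that the central element must be handled correctly: in $\mathfrak{psp}_3(\MF)$ the subalgebra is $5$-dimensional and there is no nontrivial center, while in $\mathfrak{sp}_3(\MF)$ the center $\langle e_2\otimes f_{e_2}\rangle$ is spanned by a non-pure extremal element; but this central element is a value of the bracket (it appears in brackets of extremal elements, e.g.\ as a component of $[v\otimes f_v,(v+\lambda e_2)\otimes f_{v+\lambda e_2}]$-type expressions inside the symplectic plane), so the scaling on it is already forced by the scaling on the pairs producing it.

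I expect the main obstacle to be the bookkeeping in the second paragraph: verifying that the local constants from the various $\MSL$-lines inside the symplectic plane really do all coincide, i.e.\ that the ``constant'' is well-defined along chains of intersecting lines. This is exactly the kind of connectivity-plus-local-rigidity argument that needs the symplectic-plane structure from Proposition \ref{symplecticplane} and the uniqueness of lines through two points (Lemma \ref{sl2lines}); once one knows that on the overlap of two $\MSL$-subalgebras the brackets must rescale by the same factor (because a single nonzero bracket value determines the factor), the rest is a routine propagation argument.
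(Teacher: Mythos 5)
Your overall architecture (get a constant $\gamma_L$ on each $\MSL$-line from Lemma \ref{hyppairs}, show all these constants coincide across the symplectic plane by connectedness, then finish by bilinearity on a spanning set of extremal elements) matches the paper's proof. But the step you rely on to make the constants agree is where the real content lies, and your proposed mechanism for it fails. You argue that $\gamma_1=\gamma_2$ because $\langle x,y\rangle$ and $\langle y,z\rangle$ ``overlap'' and ``a single nonzero bracket value determines the factor'' on the overlap. In fact two distinct $\MSL$-lines of the symplectic plane meet in exactly one extremal point, so the two $\mathfrak{sl}_2$-subalgebras intersect only in the $1$-dimensional space $\MF y$ (in the basis $x,y,z,[x,y],[y,z],[x,[y,z]]$ of $\mathfrak s$, the span of $x,y,[x,y]$ meets the span of $y,z,[y,z]$ in $\MF y$ alone). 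The Lie bracket restricted to a $1$-dimensional subspace is identically zero for both products, so the overlap imposes no relation whatsoever between $\gamma_1$ and $\gamma_2$. There is no pair of elements common to both subalgebras with a nonzero bracket, so the ``local rigidity'' you invoke is vacuous, and the propagation along chains of intersecting lines never gets started.

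The paper closes this gap with a genuinely different device: given two lines $L,M$ of the symplectic plane, it picks $p\in L\setminus M$, three points $q,r,s$ on $M$ collinear with $p$ (with $s\in L$), and the unique point $t$ of $M$ \emph{not} collinear with $p$ (this is where the dual-affine-plane structure is used). Since the extremal points of $M$ form a conic in a projective plane of $\g$, one can choose representatives with $q_1+r_1+s_1=t_1$. Then $[p_1,t_1]=0=[p_1,t_1]_1$ yields two linear relations among $[p_1,q_1],[p_1,r_1],[p_1,s_1]$, whose difference gives $(\gamma_L-\gamma_Q)[p_1,q_1]+(\gamma_L-\gamma_R)[p_1,r_1]=0$; a short case analysis (according to whether $[p_1,q_1]$ and $[p_1,r_1]$ are independent) then forces $\gamma_L=\gamma_Q=\gamma_R$. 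Some argument of this kind, exploiting a linear dependence among four extremal points of a single line together with the perpendicularity $p\perp t$, is indispensable; without it your proof does not go through. (Your final remarks about spanning and about the central element in the $\mathfrak{sp}_3$ case are fine once this is repaired.)
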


\begin{proof}
Let $\mathcal{S}$ be the set of extremal points in the symplectic plane
generated by $x,y$ and $z$.
For any subset $\mathcal{T}$ of $\mathcal{S}$ we denote by
$E_{\mathcal{T}}$ the set of extremal elements whose span
is in $\mathcal{T}$.

To prove the lemma, it suffices to show that
there exists a $\gamma\in \MF$ such that for all
$v,w\in E_{\mathcal{S}}$ we have $[v,w]_1=\gamma[v,w]$.

Let $L$ be any line of the symplectic plane on $\mathcal{S}$.
Then by Lemma ~\ref{hyppairs} there is an $\gamma_L\in\MF^*$
with $[v,w]=\gamma_L[v,w]_1$ for all $v,w\in E_L$.
Suppose $L, M$ are two lines in the symplectic plane on $\mathcal{S}$.
We will prove that $\gamma_L=\gamma_M$.

Let $p$ be a point on $L$ but not on $M$ and let $q,r,s$ be three distinct points on $M$ collinear with $p$, such that $s\in L$. Denote the line   through $p$ and $q$ by $Q$ and through
$p$ and $r$ by $R$.  By $t$ we denote the unique point on $M$ not collinear to $p$.
Let $p_1,q_1,r_1$ and $s_1$ be extremal elements in $p,q,r$ and $s$, respectively, such that $0\neq q_1+r_1+s_1=t_1\in t$.
Then 

$$
\begin{array}{ll}
0&=[p_1,t_1]\\
 &=[p_1,q_1+r_1+s_1]\\
 &=[p_1,q_1]+[p_1,r_1]+[p_1,s_1]
\end{array}
$$
and, moreover
$$
\begin{array}{ll}
0&=[p_1,t_1]_1\\
 &=[p_1,q_1+r_1+s_1]_1\\
 &=[p_1,q_1]_1+[p_1,r_1]_1+[p_1,s_1]_1\\
 &=\gamma_Q[p_1,q_1]+\gamma_R[p_1,r_1]+\gamma_L[p_1,s_1]
\end{array}
$$

This implies
that
$$(\gamma_L-\gamma_Q)[p_1,q_1]+(\gamma_L-\gamma_R)[p_1,r_1]=0.$$

If $[p_1,q_1]$ and $[p_1,r_1]$ are linearly independent,
we find $\gamma_L=\gamma_Q=\gamma_R$.
If $[p_1,q_1]$ and $[p_1,r_1]$ are linearly dependent,
then $\gamma_Q=\gamma_R$, as then $[p_1,[p_1,q_1]]_1=\gamma_Q[p_1,[p_1,q_1]]$
but also $[p_1,[p_1,q_1]]_1=\gamma_R[p_1,[p_1,q_1]]$.
With a similar argument, but permuted $L,Q$ and $R$, we find
$\gamma_L=\gamma_Q=\gamma_R$.

This shows that for all lines $L'$ on $p$ inside $\mathcal{S}$ we have 
$\gamma_{L'}=\gamma_L$.
But by connectedness of the symplectic plane, we find this to be true for any line $L'$ in $\mathcal{S}$, which clearly proves the statement of the lemma.
\end{proof}

\begin{prop}\label{uniquelieproduct}
There is a scalar $\gamma\in \MF^*$
such that for any two elements $v,w\in \mathfrak{g}$ we have
$[v,w]_1=\gamma [v,w]$.  
\end{prop}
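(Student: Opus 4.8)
The plan is to upgrade the local statement of Lemma~\ref{symptripproduct} to a global one on all of $\g$ by exploiting connectivity of $\E$ together with the fact that $\E$ linearly spans $\g$. First I would fix two noncommuting extremal points $x_0, y_0 \in \E$ generating a subalgebra $\mathfrak{h}_0 \cong \MSL$, and let $\gamma \in \MF^*$ be the scalar given by Lemma~\ref{hyppairs} on $\mathfrak{h}_0$, i.e. $[v,w]_1 = \gamma[v,w]$ for all $v,w \in \mathfrak{h}_0$. This $\gamma$ will be the global scalar. The key claim is that for \emph{every} pair of noncommuting $x,y \in E$, the scalar $\gamma_{\langle x,y\rangle}$ provided by Lemma~\ref{hyppairs} equals this fixed $\gamma$.

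To prove the claim I would argue that the function assigning to each $\MSL$-line $\ell \subseteq \E$ its scalar $\gamma_\ell$ is locally constant on the collinearity graph of $\Gamma(\E)$, hence constant by connectedness of $\E$ (hypothesis (C)). The local step is exactly what Lemma~\ref{symptripproduct} gives: any two $\MSL$-lines that meet in a point lie in a common symplectic plane (Proposition~\ref{symplecticplane}), and inside that symplectic plane all line-scalars agree by Lemma~\ref{symptripproduct}. So if $\ell$ and $\ell'$ share a point then $\gamma_\ell = \gamma_{\ell'}$; chaining along a path of pairwise-intersecting $\MSL$-lines connecting any two points of $\E$ (such a path exists since $\E$ is connected and, by Proposition~\ref{sl2lines}, each line has at least $3$ points so the line graph is connected whenever the point graph is), we conclude $\gamma_\ell = \gamma$ for all $\MSL$-lines $\ell \subseteq \E$.

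Having established $[v,w]_1 = \gamma[v,w]$ whenever $v,w$ lie on a common $\MSL$-line in $\E$, I would extend this to all pairs of elements of $E$: if $x \perp y$ in $E$ then $[x,y] = [x,y]_1 = 0$ by hypothesis (b), so $[x,y]_1 = \gamma[x,y]$ trivially, and if $[x,y]\neq 0$ the pair lies on the $\MSL$-line $\langle x,y\rangle \subseteq \E$, handled above. Thus $[x,y]_1 = \gamma[x,y]$ for all $x,y \in E$. Finally, both $[\cdot,\cdot]$ and $\frac{1}{\gamma}[\cdot,\cdot]_1$ are bilinear maps $\g \times \g \to \g$ agreeing on the spanning set $E \times E$ (hypothesis (C): $\E$, equivalently $E$, linearly spans $\g$), so by bilinearity they agree on all of $\g \times \g$. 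Hence $[v,w]_1 = \gamma[v,w]$ for all $v,w \in \g$.

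The main obstacle is the connectedness/path argument: one must be careful that "connected subspace of $\Gamma$" transfers to connectedness of the graph whose vertices are $\MSL$-lines and whose edges are intersections, so that the locally-constant scalar is genuinely globally constant. The cleanest route is to work at the level of points: show $\gamma$ is well defined as a function of an ordered noncommuting pair, prove invariance under replacing one point of the pair by a collinear point via the symplectic-plane lemma, and then use that any two points of the connected space $\E$ are joined by a chain of collinearities. Everything else is a routine bilinear-extension argument once the scalar is pinned down. This completes the proof of Proposition~\ref{uniquelieproduct}.
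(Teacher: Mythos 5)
Your proposal is correct and follows essentially the same route as the paper: Lemma \ref{hyppairs} assigns a scalar to each $\MSL$-line, Lemma \ref{symptripproduct} makes this assignment constant across intersecting lines, connectedness of $\E$ makes it globally constant, and the spanning property of $E$ together with bilinearity finishes the argument. Your extra care about the commuting case and about passing from connectedness of the point graph to connectedness of the line graph only makes explicit what the paper leaves implicit.
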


\begin{proof}
Let $L$ be a line of $\mathcal{L}_\E$.
By Lemma ~\ref{hyppairs}
there is a $\gamma\in \MF^*$ with $[\cdot,\cdot]_1=\gamma[\cdot,\cdot]$
restricted to the subalgebra generated by $L$. 

The above Lemma \ref{symptripproduct} implies that for any
line $M$ of $\mathcal{L}_\E$ intersecting $L$ we have that $[\cdot,\cdot]_1=\gamma[\cdot,\cdot]$
restricted to the subalgebra generated by $M$.
But then connectedness of $\E$ implies that 
$[\cdot,\cdot]_1=\gamma[\cdot,\cdot]$
restricted to the subalgebra generated by any line $N$ of $\mathcal{L}_\E$,
which, as the points of $\mathcal{E}$ linearly span $\g$, 
clearly implies the proposition.
\end{proof}

We can apply the above in the following situation.

\begin{prop}\label{Iso}
Let $(V,f)$ be a nondegenerate symplectic space.
Suppose there exists a semi-linear isomorphism  between the vector spaces
$\MG$ and  $S_f$, inducing an isomorphism
$\Gamma(\E)\cong \Gamma(\mathfrak{fsp}(V,f))(\cong HSp(V,f))$, then
$\MG$ is isomorphic to $\mathfrak{fsp}(V,f)$. \end{prop}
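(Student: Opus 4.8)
The plan is to transport the Lie product of $\mathfrak{fsp}(V,f)$ through the given semi-linear isomorphism and then invoke the uniqueness result, Proposition \ref{uniquelieproduct}. Let me write this out.

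Let $\psi : \MG \to S_f$ be the given semi-linear vector space isomorphism (say $\sigma$-semilinear for a field automorphism $\sigma$ of $\MF$), inducing the isomorphism $\varphi : \Gamma(\E) \to \Gamma(\mathfrak{fsp}(V,f))$. First I would define a new Lie product $[\cdot,\cdot]_1$ on the vector space $\MG$ by pulling back the product of $\mathfrak{fsp}(V,f) = S_f$: set $[v,w]_1 := \psi^{-1}([\psi(v),\psi(w)])$ for all $v,w\in\MG$. Since $\psi$ is additive and $\sigma$-semilinear with $\sigma$ a field automorphism, and the Lie product of $\mathfrak{fsp}(V,f)$ is $\MF$-bilinear and satisfies the Jacobi identity, one checks routinely that $[\cdot,\cdot]_1$ is again an $\MF$-bilinear alternating product satisfying the Jacobi identity; i.e. $\g_1 := (\MG,[\cdot,\cdot]_1)$ is a Lie algebra, and by construction $\psi : \g_1 \to \mathfrak{fsp}(V,f)$ is a Lie algebra isomorphism.

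Next I would verify that $[\cdot,\cdot]_1$ satisfies hypotheses (a)--(c) preceding Lemma \ref{hyppairs}. Condition (a): each $x\in E$ spans a point of $\E$, which $\varphi$ sends to a point of $HSp(V,f)$, i.e. to a $1$-space $\langle v\rangle$ of $V$ outside $\rad(f) = \{0\}$; by Lemma \ref{rank1extremal} and Proposition \ref{puretensors} the corresponding pure tensor $v\otimes f_v$ is a pure extremal element of $\mathfrak{fsp}(V,f)$, so $\psi(x)$ is a scalar multiple of a pure tensor, hence extremal, and therefore $x$ is extremal with respect to $[\cdot,\cdot]_1$. Condition (b): for $x,y\in E$, $[x,y]=0$ iff the points $\langle x\rangle,\langle y\rangle$ of $\Gamma(\E)$ are non-collinear (equivalently $\perp$-related), iff $\varphi(\langle x\rangle),\varphi(\langle y\rangle)$ lie on a singular line of $(V,f)$, iff $[\psi(x),\psi(y)]=0$ in $\mathfrak{fsp}(V,f)$, iff $[x,y]_1=0$; here I use that $\varphi$ is an isomorphism of $\MSL$-geometries so it preserves collinearity, together with the dictionary at the end of Section \ref{sec:geom} relating collinearity, $\perp$, and $g(\cdot,\cdot)\neq 0$ on both sides. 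Condition (c): that $\E$ is a subspace of $\Gamma(\g_1)$ follows because $\psi$ carries the $\MSL$-lines of $\Gamma(\g_1)$ lying in $\E$ precisely onto the lines of $HSp(V,f)$ through $\varphi(\E)$ (again since $\psi$ is a Lie isomorphism and $\varphi$ is a geometry isomorphism), so any $\MSL$-line of $\g_1$ meeting $\E$ in two points is contained in $\E$.

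With (a)--(c) in hand, Proposition \ref{uniquelieproduct} applies verbatim and yields a scalar $\gamma\in\MF^*$ with $[v,w]_1 = \gamma[v,w]$ for all $v,w\in\MG$. Thus the map $v\mapsto \psi(v)$ is a semi-linear bijection $\MG\to\mathfrak{fsp}(V,f)$ satisfying $\psi([v,w]) = \psi([v,w]_1/\gamma) = \gamma^{\sigma}{}^{-1}[\psi(v),\psi(w)]$ — in particular, after rescaling $\psi$ by a suitable nonzero scalar (or composing with the automorphism of $\mathfrak{fsp}(V,f)$ given by scalar multiplication, which is legitimate since the Lie product of $\mathfrak{fsp}(V,f)$ is homogeneous of degree $2$ under such rescalings of the ambient $V$), we obtain an honest Lie algebra isomorphism $\MG\cong\mathfrak{fsp}(V,f)$; concretely $\g_1\cong\mathfrak{fsp}(V,f)$ via $\psi$ and $\g_1\cong\g$ via the identity-up-to-scalar-$\gamma$ on the product, so $\MG\cong\mathfrak{fsp}(V,f)$ as Lie algebras.

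The main obstacle I anticipate is purely bookkeeping around the semi-linearity: one must make sure that pulling the $\MF$-bilinear bracket back through a $\sigma$-semilinear map still produces an $\MF$-bilinear (not merely biadditive and $\sigma$-biadditive) bracket, so that Proposition \ref{uniquelieproduct} is genuinely applicable; this works because pulling $\lambda\mapsto\psi^{-1}(\lambda\,\psi(v)) = \sigma^{-1}(\lambda)v$ composes correctly, but it deserves a careful line. The other point needing care is reconciling the scalar $\gamma$ from uniqueness with the semi-linear twist to land on a bona fide isomorphism rather than merely a "Lie product proportional to the transported one"; this is handled by the remark that $\mathfrak{fsp}(V,f)$ admits scalar rescalings of its bracket as automorphisms-up-to-isomorphism (equivalently, $\mathfrak{fsp}(V,f)\cong\mathfrak{fsp}(V,cf)$ for any $c\in\MF^*$), so no obstruction survives.
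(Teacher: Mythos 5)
Your proposal is correct and follows essentially the same route as the paper: identify the vector spaces via the given semi-linear map, pull back the bracket of $\mathfrak{fsp}(V,f)$ to get a second Lie product $[\cdot,\cdot]_1$ on $\MG$ inducing the same $\MSL$-geometry, apply Proposition \ref{uniquelieproduct} to obtain $[\cdot,\cdot]_1=\gamma[\cdot,\cdot]$, and remove the scalar by the rescaling $v\mapsto\gamma^{-1}v$. Your write-up is merely more explicit than the paper's about the semi-linearity bookkeeping and the verification of hypotheses (a)--(c), which the paper leaves implicit.
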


\begin{proof}
After identifying the underlying vector spaces  of 
$\mathfrak{g}$ and $S_f$ of $\mathfrak{fsp}(V,f)$, 
we find $\Gamma(\E)$ to be equal to $\Gamma(\mathfrak{fsp}(V,f))$.
Thus, we can apply Proposition ~\ref{uniquelieproduct} and find that,
up to a scalar multiple, the two Lie products of 
$\mathfrak{g}$ and $\mathfrak{fsp}(V,f)$ are the same and hence these
Lie algebras are isomorphic.
Indeed, if $[\cdot,\cdot]_1=\gamma [\cdot,\cdot]$, for some $\gamma\in \mathbb{F}^*$,
then scalar multiplication with $\gamma^{-1}$ provides the isomorphism:

$$[\gamma^{-1}v,\gamma^{-1}w]_1=\gamma^{-2}[v,w]_1=\gamma^{-2}\gamma[v,w]=\gamma^{-1}[v,w]$$
for all $v,w\in  \g$.
\end{proof}

\section{The embedding}
\label{sec:embedding}

Before we begin with the last steps of the identification of a Lie algebra $\MG$ satisfying the conditions of Theorem \ref{MainThm}, 
we review the previous results.

Let $\MG$ be a Lie algebra  over the field $\MF$ with  $\chara \MF\neq 2$, generated by its set of  extremal elements $E$. Assume that the radical 
of the extremal form $g$ is trivial, and,  moreover, that  
conditions (a) and (b) of Theorem \ref{MainThm} hold.
By Theorem \ref{PhiThm} each of the connected components of the point-line geometry 
$\Gamma(\mathfrak{g})=(\E,\mathcal{L})$ is then isomorphic  to the geometry of hyperbolic lines  $ HSp(V,f)$ for some nondegenerate symplectic space $(V,f)$. 

If $\Gamma(\mathfrak{g})$ consists of more than one connected component, then 
$\mathfrak{g}$ is a direct sum of the subalgebras generated by the extremal points in the various components. So, we can consider the case where 
$\Gamma(\mathfrak{g})$ consists of a single connected component
isomorphic  to   $ HSp(V,f)$ for some nondegenerate symplectic space $(V,f)$. 

The previous section, in particular Proposition \ref{Iso}, implies that
if the vector space $\g$ is isomorphic to the vector space
underlying $\mathfrak{fsp}(V,f)$, and this isomorphism induces an isomorphism between the two $\MSL$-geometries,  
then the Lie algebras $\g$ and  $\mathfrak{fsp}(V,f)$ are isomorphic.

In this section it is our goal to establish the existence of  such an isomorphism of vector spaces in the finite dimensional case.

Again we consider a slightly more general situation.
From now on we assume that $\mathfrak{g}$
is generated by its set of extremal elements $\widehat E$ and that
$E$ is a subset of $\widehat E$ such that
\begin{enumerate}[(A)]
\item any two elements $x,y\in E$ commute or generate a subalgebra of $\g$ isomorphic to $\MSL(\mathbb{F})$;
\item for any three elements $x,y,z\in E$ with $[x,y]\neq 0$, there is an
element $u\in E$ contained in $\langle x,y\rangle$ commuting with $z$.
\item $\E$ is a nondegenerate connected subspace of $\Gamma$ which linearly spans $\g$. 
\end{enumerate}

Here $\E$ denotes the set of extremal points spanned by elements of $E$.

Now, by Theorem \ref{PhiThm} the subspace $(\E,\mathcal{L}_\E)$ is isomorphic to
$HSp(V,f)$ for some nondegenerate symplectic space $(V,f)$.
Our main tool for establishing a linear bijection between the vector spaces $\g$ and $S_f$ is the following result due to Schillewaert and Van Maldeghem \cite{SvM13}:

\begin{satz}[\cite{SvM13}]\label{SvM}
Let $U$ be a vector space of dimension $n\geq 3$ 
over a field $\mathbb{F}$ of order at least $3$ equipped with a nondegenerate symmetric bilinear form $b$, and $W$ a vector space of dimension $d$ over a field $\mathbb{K}$.
Let $\cV$ be an injective map from the point set $P$ of $\mathbb{P}(U)$ into 
the point set of $\mathbb{P}(W)$ such that:

\begin{enumerate}[$(a)$]
\item $\cV(P)$ spans $\mathbb{P}(W)$.
\item for any line $\ell$ of $\mathbb{P}(U)$, 
the image $\cV(\ell)=\{\cV(x)\mid x\in \ell\}$
spans a plane of $\mathbb{P}(W)$ 
intersecting $\cV(P)$ in precisely  $\cV(\ell)$;
the points of $\cV(\ell)$ form an oval in the plane.
\item $d\geq \frac{1}{2}n(n+1)$.
\end{enumerate}

Then $d=\frac{1}{2}n(n+1)$ and $\mathbb{F}\cong \mathbb{K}$,
and there exists a semi-linear bijection $\phi: S_b(U\otimes U^*) \rightarrow W$
such that
$\cV(\langle u\rangle)=\langle \phi(u\otimes b_u)\rangle$ for all $u\in U$.
\end{satz}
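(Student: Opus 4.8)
This is a characterization of the quadric Veronesean of a projective space, and the natural route is an induction on $n$ in the spirit of the Mazzocca--Melone axioms (as used by Thas and Van Maldeghem for finite quadric Veroneseans and extended to general fields). Write $X:=\cV(P)\subseteq \mathbb{P}(W)$, and for a line $\ell$ of $\mathbb{P}(U)$ let $\pi_\ell:=\langle\cV(\ell)\rangle$ be the plane it spans, carrying the oval $O_\ell:=\cV(\ell)$. Since $\cV$ is injective and two distinct lines of $\mathbb{P}(U)$ meet in at most a point, two distinct ovals meet in at most a point; combined with hypothesis $(b)$ (each oval-plane meets $X$ exactly in its oval) this forces $\pi_\ell\cap\pi_m$ to contain exactly the point $\cV(\ell\cap m)$ when $\ell,m$ are concurrent and coplanar in $\mathbb{P}(U)$, and no point of $X$ otherwise; one also checks $\pi_\ell\cap\pi_m$ is never a line. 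This "clean intersection" behaviour of the oval-planes is the combinatorial backbone, and pinning it down carefully is already a small obstacle.

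The base case is $n=3$, where $\mathbb{P}(U)\cong PG(2,\mathbb{F})$ and one must prove that $X$ is projectively the quadric Veronesean of a projective plane, so that $d=6$ and $\mathbb{K}\cong\mathbb{F}$. Here one coordinatizes synthetically: fix a triangle $p_0,p_1,p_2$ in $\mathbb{P}(U)$ together with a unit point; the three ovals on the sides of the triangle and the pencils of ovals through the vertices let one build a projective frame for $\langle X\rangle$, prove that $\langle X\rangle$ has projective dimension exactly $5$ --- hypothesis $(c)$ gives $d\ge 6$, the explicit spanning set gives $d\le 6$ --- and read a field isomorphism $\mathbb{K}\cong\mathbb{F}$ off the cross-ratios carried by a single oval (this is where $|\mathbb{F}|\ge 3$ is essential; $|\mathbb{F}|=3$ is checked by hand). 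The upshot is that in these coordinates $\cV$ is the standard degree-$2$ Veronese of $PG(2,\mathbb{F})$ up to a semi-linear bijection; in particular each $O_\ell$ is a conic with the \emph{standard} quadratic parametrization, and the form $b$ enters only through the identification of $\operatorname{Sym}^2 U$ with $S_b(U\otimes U^*)$ via $u\mapsto u\otimes b_u$ (the Veronesean itself is insensitive to $b$).

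For the inductive step, first bootstrap from the base case: since every line of $\mathbb{P}(U)$ lies in a plane, the planar result shows that for \emph{every} $n$ and every line $\ell$, $\cV|_\ell$ is a standard Veronesean conic. Using this, the cross-term of two points $p,h$ of $\mathbb{P}(U)$ lies in $\langle\cV(p),\cV(\langle h\rangle),\cV(p+h)\rangle$, and expanding an arbitrary point $q\notin H$ along the line through $p$ and $H$, one shows that $\langle X\rangle$ is spanned by $\langle\cV(H)\rangle$ together with $n$ further points ($\cV(p)$ and the $n-1$ points $\cV(p+e_i)$ for a basis of a hyperplane $H$ missing $p$). Hence $\binom{n}{2}\le\dim_{\mathrm{vec}}\langle\cV(H)\rangle\le d\le\binom{n}{2}+n=\binom{n+1}{2}$, and with $(c)$ this forces $d=\binom{n+1}{2}$ and $\dim_{\mathrm{vec}}\langle\cV(H)\rangle\ge\binom{n}{2}$, so axiom $(c)$ holds for $\cV|_H$. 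The induction hypothesis applied to each hyperplane $H$ now gives $\mathbb{K}\cong\mathbb{F}$, $\dim_{\mathrm{vec}}\langle\cV(H)\rangle=\binom{n}{2}$, and a Veronesean description of $\cV|_H$. Finally one glues: for two hyperplanes meeting in a codimension-$2$ subspace the semi-linear bijections furnished by induction agree on the overlap up to a projectivity and can be normalized to agree; since the hyperplanes cover $\mathbb{P}(U)$ and $X$ spans, the normalized maps assemble into a single semi-linear bijection $\phi:S_b(U\otimes U^*)\to W$ with $\cV(\langle u\rangle)=\langle\phi(u\otimes b_u)\rangle$ for all $u\in U$. (Alternatively, having fixed $d=\binom{n+1}{2}$ and $\mathbb{K}\cong\mathbb{F}$, one may build $\phi$ directly by mapping the standard Veronesean frame to the $\binom{n+1}{2}$ frame points produced from a frame of $\mathbb{P}(U)$ and its cross-terms, then checking equality on all of $X$ using that every line maps to a standard conic.)

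I expect the genuinely hard part to be the planar base case $n=3$: converting the purely incidence-theoretic hypotheses on the oval-spanned set into an honest coordinate system, proving $\dim\langle X\rangle=5$ with no extra assumptions, and extracting $\mathbb{K}\cong\mathbb{F}$ --- this is where essentially all of the synthetic projective geometry lives and where $|\mathbb{F}|\ge 3$ is used. The inductive dimension bookkeeping and the gluing are then comparatively routine, the only delicate point being to order the estimates so that hypothesis $(c)$ becomes available for the restrictions to hyperplanes.
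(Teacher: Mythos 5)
There is nothing in the paper to compare your argument against: Theorem~\ref{SvM} is not proved in this paper at all. The authors import it verbatim from Schillewaert and Van Maldeghem, stating explicitly that it is ``a reformulated (and somewhat weaker) version of Theorem 2.3 of \cite{SvM13}'', and the only original content surrounding it in Section~\ref{sec:embedding} is the reduction from the symmetric form $b$ to the symplectic form $f$ (Corollary~\ref{corSvM}) and the verification, in Lemmas~\ref{V1*} and~\ref{V3*}, that the embedding of $\mathbb{P}(\ME)$ into $\mathbb{P}(\g)$ satisfies hypotheses $(a)$--$(c)$. So your task was effectively to reprove an external black box.

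Taken on its own terms, your sketch follows the strategy that the Veronesean-cap literature actually uses (Mazzocca--Melone-style axioms, induction on $n$ with a planar base case, gluing over hyperplanes), so the architecture is sound. But as a proof it has genuine gaps, and you identify the largest one yourself: the base case $n=3$, where one must show $\dim\langle X\rangle=5$ from pure incidence data and extract $\mathbb{K}\cong\mathbb{F}$, is named rather than carried out, and that is where essentially all the mathematical content of \cite{SvM13} lives. Two further points are asserted without justification and are not automatic: the claim that two oval-planes never meet in a line (two distinct ovals share at most one point of $X$, but a line in $\pi_\ell\cap\pi_m$ meeting $X$ in at most one point is not by itself a contradiction --- in the literature this is either a separate axiom or is extracted from the dimension bound $(c)$, and the order in which you deploy $(c)$ matters); and the coherence of the gluing step, i.e.\ that the hyperplane-wise semi-linear bijections can be simultaneously normalized (the overlaps are codimension-$2$ subspaces and one must check a cocycle condition, not just pairwise agreement up to projectivity). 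So the proposal is a credible reconstruction of the published proof's outline, not a proof.
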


We notice that the above result is a reformulated (and somewhat weaker) version
of Theorem 2.3 of \cite{SvM13}.
Schillewaert and Van Maldeghem phrase their result in terms of 
quadric Veroneseans and symmetric matrices.

We want to apply the above result to prove that there is a semi-linear bijection between the vector space of $S_f$ and $\g$.
We first will apply it to the case that $U=V$ and $W=S_f$.
Indeed, for finite dimensional $V$ we find that the map $\cV:\mathbb{P}(V)\rightarrow\mathbb{P}( S_f)$ defined by
$$\cV(\langle v\rangle)=\langle v\otimes f_v\rangle$$
for all $0\neq v\in V$, does satisfy the conditions of Theorem \ref{SvM},
see Example \ref{exam2dim}. 
So, Theorem \ref{SvM} implies that there is a semi-linear bijection between $S_f$ and $S_b(V\otimes V^*)$ for  any symmetric nondegenerate bilinear form $b$ on $V$, mapping pure tensors to pure tensors.
This can be made concrete in the following way.

Suppose $(V,f)$ is a nondegenerate symplectic space of finite dimension.
Fix a basis $B$ of $V$ and consider the element $s=\Sigma_{u\in B}u\otimes f_u$. Then the action of $s$ on $V$ is invertible.
Now consider the bilinear form $b:V\times V\rightarrow \mathbb{F}$
given by $$b(v,w)=f(s(v),w)=\Sigma_{u\in B}f(u,v)f(u,w)$$
for all $v,w\in V$.
Clearly, $b$ is nondegenerate and symmetric.
So,  the linear map $\psi:S_f\rightarrow S_b$ defined by
$$\psi(v\otimes f_v)=v\otimes b_v$$
for all $v\in V$, 
is an isomorphism mapping the pure tensors of $S_f$ to the pure tensors of $S_b$.

This implies the following.

\begin{cor}\label{corSvM}
Let $(V,f)$ be a nondegenerate symplectic space of finite dimension $n\geq 4$ over a field $\mathbb{F}$. Let
$\cV$ be a map of the point set $P$ of $\mathbb{P}(V)$ into the point set of 
$\mathbb{P}(W)$, where $W$ is a vector space of dimension $d$ over  a field $\mathbb{K}$,
such that the conditions $(a)$, $(b)$ and $(c)$ of Theorem \ref{SvM} hold.

Then $d=\frac{1}{2}n(n+1)$ and $\mathbb{F}\cong \mathbb{K}$,
and there exists a semi-linear bijection $\phi: S_f \rightarrow W$
such that
$\cV(\langle v\rangle)=\langle \phi(v\otimes f_v)\rangle$ for all $v\in V$.

\end{cor}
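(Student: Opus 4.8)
The plan is to reduce this statement directly to Theorem \ref{SvM} by using the isomorphism $\psi : S_f \to S_b$ constructed in the discussion preceding the corollary. Recall that for a fixed basis $B$ of $V$ we set $s = \sum_{u \in B} u \otimes f_u$, and since $f$ is nondegenerate and $V$ is finite-dimensional, the action of $s$ on $V$ is invertible; consequently the symmetric bilinear form $b(v,w) = f(s(v),w) = \sum_{u \in B} f(u,v)f(u,w)$ is nondegenerate, and $\psi(v \otimes f_v) = v \otimes b_v$ is a linear isomorphism $S_f \to S_b$ carrying pure tensors to pure tensors. The only mild point to check is that $b$ is genuinely symmetric: $f(s(v),w) = f(s(w),v)$ follows because $s$ is itself a symmetric element of $V \otimes V^*$ (equivalently, $f(s(v),w) = \sum_u f(u,v)f(u,w)$ is visibly symmetric in $v,w$); this is already implicit in the text.

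Next I would apply Theorem \ref{SvM} with $U = V$, the form $b$ as above, and the map $\cV' := \cV \circ (\text{point map induced by } \psi^{-1})$, or — cleaner — simply observe that the hypotheses $(a)$, $(b)$, $(c)$ on $\cV$ relative to $\mathbb{P}(V)$ are exactly the hypotheses of Theorem \ref{SvM} with $U = V$ equipped with $b$: note that a subset of $\mathbb{P}(V)$ is a line of $\mathbb{P}(V)$ regardless of which nondegenerate symmetric form we attach to $V$, and conditions $(a)$–$(c)$ make no reference to $b$. Since $n \geq 4 \geq 3$ and $|\mathbb{F}| \geq 3$ (as $\chara \mathbb{F} \neq 2$), Theorem \ref{SvM} applies and yields $d = \tfrac12 n(n+1)$, $\mathbb{F} \cong \mathbb{K}$, and a semi-linear bijection $\phi_0 : S_b(V \otimes V^*) \to W$ with $\cV(\langle v \rangle) = \langle \phi_0(v \otimes b_v)\rangle$ for all $v \in V$.

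Finally I would set $\phi := \phi_0 \circ \psi : S_f \to W$. This is a composition of a linear isomorphism and a semi-linear bijection, hence semi-linear and bijective, and for every $0 \neq v \in V$ we have $\phi(v \otimes f_v) = \phi_0(\psi(v \otimes f_v)) = \phi_0(v \otimes b_v)$, so $\cV(\langle v\rangle) = \langle \phi_0(v\otimes b_v)\rangle = \langle \phi(v \otimes f_v)\rangle$, as required. There is essentially no obstacle here: the entire content of the corollary is Theorem \ref{SvM} together with the bookkeeping that the target of that theorem ($S_b$) can be transported back to $S_f$ via $\psi$. The one thing to state carefully is that conditions $(a)$–$(c)$ for $\cV$ on $\mathbb{P}(V)$ do not depend on the choice of nondegenerate symmetric bilinear form on $V$, which is what licenses invoking Theorem \ref{SvM} in the first place.
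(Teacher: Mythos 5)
Your proposal is correct and follows essentially the same route as the paper: the text preceding the corollary constructs exactly the isomorphism $\psi:S_f\rightarrow S_b$ (with $b(v,w)=f(s(v),w)$ nondegenerate and symmetric) and obtains the corollary by applying Theorem \ref{SvM} to $(V,b)$ and composing with $\psi$. Your added remarks --- that conditions $(a)$--$(c)$ do not depend on the choice of form on $V$, and that $\chara\mathbb{F}\neq 2$ gives $|\mathbb{F}|\geq 3$ --- are exactly the bookkeeping the paper leaves implicit.
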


In the following two lemmas  we show that the embedding 
of $\mathbb{P}(\ME)\simeq \mathbb{P}(V)$ into $\mathbb{P}(\g)$ satisfies the conditions of the corollary, provided  that $V$ is finite dimensional. 

\begin{lem}\label{V1*}
Let $\ell$ be a line of $\mathbb{P}(\E)$. Then the points of  $\ell$ span a  plane $\pi$ of $\MP(\MG)$, with $\mathcal{E}(\MG)\cap \pi=\ell$ forming an oval. 
\end{lem}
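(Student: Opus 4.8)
The plan is to split according to the two kinds of lines of $\mathbb{P}(\E)=(\ME,\mathcal L_\E\cup\mathcal P_\E)$ (Remark~\ref{pollines}): the $\MSL$-lines, and the polar lines. If $\ell$ is an $\MSL$-line, pick two points $x,y\in\ell$. Then $\langle x,y\rangle\cong\MSL(\MF)$ is $3$-dimensional, so $\pi:=\MP(\langle x,y\rangle)$ is a plane of $\MP(\MG)$. By Proposition~\ref{sl2lines} the extremal elements of $\MG$ lying in $\langle x,y\rangle$ are exactly the nonzero vectors $\alpha g(x_1,y_1)x_1+\beta y_1+\gamma[x_1,y_1]$ with $\alpha\beta=\gamma^2$; their spans form a conic, which by definition of the $\MSL$-line is precisely $\ell$, and which (by Remark~\ref{oval}) is an oval. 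Since $\chara\MF\neq2$ forces $|\MF|\geq3$, a conic has at least four points and hence spans its plane; so $\ell$ spans $\pi$, $\mathcal E(\MG)\cap\pi=\ell$, and $\ell$ is an oval. This case is therefore immediate.

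For the polar case I would use that, as $\mathrm{rad}(g)=\{0\}$, the form $f$ is nondegenerate and $\hat\varphi$ of Theorem~\ref{PhiThm}/Remark~\ref{pollines} identifies $\mathbb{P}(\ME)$ with $\MP(V)$, carrying a polar line $\ell$ to $\mathbb{P}(L)$ for a totally isotropic $2$-space $L$ of $(V,f)$; the points of $\ell$ pairwise commute. Choose distinct $x,y\in\ell$ and then a point $z\in\E$ with $z\not\perp x$, $z\not\perp y$ and $y\notin\langle x,z\rangle$; such $z$ exists because $\hat\varphi$ is onto $\MP(V)$ and the two hyperplanes $\hat\varphi(x)^{\perp_f},\hat\varphi(y)^{\perp_f}$ (together with one further plane) do not cover $V$. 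Then $(x,z,y)$ is a symplectic triple, so by Proposition~\ref{symtrip} it generates a subalgebra $\mathfrak s$ isomorphic to $\mathfrak{sp}_3(\MF)$ or $\mathfrak{psp}_3(\MF)$, with $x,y,z$ mapped to scalar multiples of pure tensors, and (Proposition~\ref{symplecticplane}) $\mathfrak s$ contains the symplectic plane $\mathcal S$ on $W':=\langle\hat\varphi(x),\hat\varphi(y),\hat\varphi(z)\rangle$. Since $z\not\perp x,y$ one computes $\mathrm{rad}(f|_{W'})=\hat\varphi(z)^{\perp_f}\cap L$, a point of $\mathbb{P}(L)$ distinct from $\hat\varphi(x),\hat\varphi(y)$; write $P_0:=\hat\varphi^{-1}(\mathrm{rad}(f|_{W'}))\in\ell$.

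Next I would show $\mathfrak s\cong\mathfrak{sp}_3(\MF)$ (i.e.\ $\dim\mathfrak s=6$). If instead $\mathfrak s\cong\mathfrak{psp}_3(\MF)$, then by Example~\ref{exam4} the point $P_0$ corresponds to the vanishing radical direction and so does not lie in $\mathfrak s$, while the remaining $|\MF|\geq3$ points of $\ell$ lie in $\mathfrak s$, pairwise commute, and have representatives spanning only a $2$-dimensional subspace $D$ of $\MG$ in which they occupy $|\MF|$ collinear projective points. Applying Proposition~\ref{2generators} to two of these spanning $D$ and a third with both coordinates nonzero forces case~(b): $D$ is an extremal line of $\MG$, contradicting condition~(A) (equivalently, the absence of extremal lines). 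Hence $\mathfrak s\cong\mathfrak{sp}_3(\MF)$. Now I would invoke Example~\ref{exam4} inside $\mathfrak s$ with $x$ in the role of $e_1\otimes f_{e_1}$: the pure tensors of $\mathfrak s$ commuting with $x$ span a $3$-dimensional subspace $U_x\subseteq x^\perp$, and in suitable coordinates their coefficients satisfy $\alpha\beta=\gamma^2$, so they form a conic — hence an oval — in the plane $\pi:=\MP(U_x)$. These pure tensors are exactly the ones supported on $\hat\varphi(x)^{\perp_f}\cap W'=L$, so this conic is precisely $\ell$; in particular $\ell$ spans $\pi$. Finally, any extremal point of $\MG$ lying in $\pi\subseteq\mathfrak s$ is extremal already in $\mathfrak s\cong\mathfrak{sp}_3(\MF)$, hence (Example~\ref{exam4}) a scalar multiple of a pure tensor, and being in $U_x\subseteq x^\perp$ it commutes with $x$, so it is supported on $L$ and lies on $\ell$. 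Thus $\mathcal E(\MG)\cap\pi=\ell$.

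The step I expect to be the main obstacle is precisely the exclusion of the $\mathfrak{psp}_3(\MF)$ case: one must use that $\hat\varphi$ is a bijection onto the whole of $\MP(V)$, so that the radical point of $W'$ genuinely carries a (pure) extremal point of $\MG$, together with the hypothesis that $\MG$ has no extremal lines, to rule out the $5$-dimensional quotient and pin down the $6$-dimensional algebra. Everything else reduces to bookkeeping with Proposition~\ref{sl2lines} and the explicit description of $\mathfrak{sp}_3(\MF)$ in Example~\ref{exam4}.
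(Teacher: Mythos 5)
Your $\mathfrak{sl}_2$-line case is fine and agrees with the paper. In the polar-line case there are two problems. The smaller one is the contradiction you invoke to exclude $\mathfrak{psp}_3(\MF)$: an extremal line consists of pairwise \emph{commuting} extremal elements, so its existence violates neither condition (A) nor anything else assumed in this section (``no extremal lines'' is not a hypothesis here, and it is not equivalent to (A)). The conclusion you want does follow, but from a different punchline: case (b) of Proposition \ref{2generators} would make \emph{every} point of $\MP(D)$ extremal in $\g$, hence extremal in $\mathfrak{s}\cong\mathfrak{psp}_3(\MF)$, whereas Example \ref{exam4} shows that the extremal points of $\mathfrak{psp}_3(\MF)$ lying in $D$ occupy only $|\MF|$ of its $|\MF|+1$ projective points. (The paper excludes the $2$-dimensional case by a different device: a second symplectic triple $(x,z',y)$, which forces the missing point $a$ to be a nonzero central element of $\mathfrak{psp}_3(\MF)$.)

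The genuine gap is the step ``this conic is precisely $\ell$''. The conic of pure-tensor points in $U_x\subseteq\mathfrak{s}\cong\mathfrak{sp}_3(\MF)$ consists of the $|\MF|$ points of $\ell$ collinear with $z$ \emph{together with the centre} $c=\langle e_2\otimes f_{e_2}\rangle$ of $\mathfrak{s}$. Your remaining point $P_0$ of $\ell$ (the one commuting with $z$) is a priori not even contained in the subalgebra $\mathfrak{s}$, and conversely $c$ is a priori not extremal in $\g$; the correspondence ``pure tensors supported on $L$'' silently matches the radical point $\langle e_2\rangle$ of $\MP(L)$ with $c$, but $\langle e_2\rangle$ is not a point of $HSp(W',f|_{W'})$, so the geometric identification $\hat\varphi$ gives you no control there, and $\hat\varphi^{-1}(\langle e_2\rangle)=P_0$ need not equal $c$. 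Establishing $P_0=c$ is exactly the hard core of the paper's proof: it takes a second symplectic triple $(x,z',y)$ whose non-collinear point on $\ell$ differs from $P_0$, the observation that every extremal point commuting with $P_0$ also commutes with $c$, and the triviality of $\rad(g)$ to force a suitable combination $\lambda c_1+\mu a_1$ to vanish. Without this step you cannot conclude that $\ell$ spans a plane (its span could be $4$-dimensional, namely $U_x$ plus the direction of $P_0$), that $\ell$ is an oval, or that $\E(\MG)\cap\pi=\ell$.
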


\begin{proof}
We have to distinguish two cases for $\ell$; it is an $\mathfrak{sl}_2$-line or a polar line.
Let us first consider the case that $\ell$ is an $\mathfrak{sl}_2$-line. 
Then $\langle \ell \rangle$  is indeed a $3$-dimensional subalgebra isomorphic to
$\mathfrak{sl}_2(\mathbb{F})$ and the extremal points in $\langle x ,y\rangle$ form a conic, and hence an oval, as we have seen in \ref{sl2lines}.

Now assume that $\ell$ is a polar line. 
We have to prove that the linear span of $\ell$ is a $3$-dimensional subspace of $\mathfrak{g}$ meeting $\ME$ just in $\ell$.
And, moreover, that the points on $l$ form a conic and hence an oval in this $3$-space.

Fix two points $x$ and $y$ on $\ell$.
Let $z$ be a point in $\ME$ such that $(x,z,y)$ is a symplectic triple. 
The subalgebra $\langle x,y,z\rangle$ is isomorphic to $\mathfrak{sp}_3(\mathbb{F})$ or its central quotient $\mathfrak{psp}_3(\mathbb{F})$, see Proposition \ref{symtrip}.

The point $z$ is collinear with all but  one point, say $a$, on the polar line $\ell$. 
Clearly  $\ell\setminus\{a\}\subseteq \langle x,y,z\rangle$.
As we have seen in Example \ref{exam4}, the points of $\ell\setminus\{a\}$ are all contained in a subspace of 
$\mathfrak{g}$ of dimension $3$ if $\langle x,y,z\rangle\simeq \mathfrak{sp}_3(\MF)$ and of dimension $2$ if $\langle x,y,z\rangle\simeq \mathfrak{psp}_3(\MF)$.
In the first case they are all but one of the points of a conic (the missing point being the center of  $\langle x,y,z\rangle$)
and in the second case all but one of the points of the $2$-space.

As we can assume that $\dim(V)\geq 4$, we can  
consider a second point $z'$ with  $(x,z',y)$ being another symplectic triple
but this time with $z'$ collinear with $a$, but not with some $a'\neq a$ in $\ell$.
As above, we find that all points of $\ell\setminus \{a'\}$
are  contained in a subspace of 
$\mathfrak{g}$ of dimension $3$  or $2$.
Moreover, in the first case they are all but one of the points of a conic (the missing point being the center of  $\langle x,y,z'\rangle$)
and in the second case all but one of the points of the $2$-space.

If $\ell\setminus \{a\}$  spans a $2$-space, then 
this $2$-space is contained in $\langle x,y,z'\rangle$, 
and  we find at least three extremal points in it that are not commuting with $z'$.
But this implies that also  $ \ell\setminus \{a'\}$  spans a $2$-space
and $ \ell\setminus \{a\}$ and $ \ell\setminus \{a'\}$ span the same $2$-space.
In particular, $a$ is contained in this $2$-space. But since $a\in \langle x,y,z\rangle\cong\mathfrak{psp}_3(\MF)$ and the center of $\mathfrak{psp}_3(\MF)$ is trivial, it follows $[a,z]\neq 0$,
contradicting that $a$ is not collinear to $z$.

Hence $ \ell\setminus \{a\}$  and $\ell\setminus \{a'\}$ both span a $3$-dimensional subspace.

Let $c$ be  the center of $\langle x,y,z\rangle$.
Then, as we can see in $\Gamma(\g)\cong HSp(V,f)$, 
every element $u\in \ME$ that commutes with $a$ also commutes with all points of a polar line
on $a$ that meets  $\langle x,y,z\rangle$  in at least three points. 
As $c$ is in the span of these points, we find $[u,c]=0$.

Let $c_1$ be a nonzero element of $c$ and $a_1$ be a nonzero element
of $a$ and fix $\lambda,\mu\in \MF$, not both $0$, such that $g(z',\lambda c_1+\mu a_1)=0$.
As also $g(u, \lambda c_1+\mu a_1)=0$ for all $u\in \ME$ with $a\perp u$,
we find that  $g(v, \lambda c_1+\mu a_1)=0$ for all $v\in\langle u^\perp,z'\rangle=\MG$.
This implies that $\lambda c_1+\mu a_1$ is in the radical of $g$ and hence $0$.
But then $a=c$, so $\ell$ is a conic (and hence an oval) in $\langle \ell\rangle$.
Finally we notice that the
extremal points in $\langle \ell\rangle$ are in $\ell$.
See Example \ref{exam4}.
This proves the lemma.
\end{proof}

\begin{lem}\label{V3*}
Suppose $(\ME,\mathcal{L}_\E)\cong HSp(V,f)$ for some nondegenerate symplectic space $(V,f)$ of dimension $2m$.
Then  $\dim \MG\geq m(2m+1)$. 
\end{lem}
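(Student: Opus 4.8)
The plan is to produce enough linearly independent vectors in $\MG$ by exhibiting pure extremal elements whose images under the embedding span a space of dimension at least $m(2m+1) = \frac12 \cdot 2m(2m+1)$; this is exactly $\dim S_f$, so the estimate cannot be improved and matches Corollary \ref{corSvM}. I would fix a symplectic basis $e_1,\dots,e_m,f_1,\dots,f_m$ of $V$ with $f(e_i,f_i)=1$ and all other pairings zero, and transport it via the isomorphism $(\ME,\mathcal{L}_\E)\cong HSp(V,f)$ to pure extremal points $x_{e_i}, x_{f_i}$ of $\MG$, together with pure extremal points $x_v$ for the various ``mixed'' vectors $v = e_i + e_j$, $e_i + f_j$ (for $i\neq j$), etc. Each such $x_v$ is spanned by an extremal element $X_v \in E$; the goal is to show that the collection $\{X_v\}$ contains $m(2m+1)$ linearly independent elements.

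The key bookkeeping device is Lemma \ref{V1*}: for any line $\ell$ of $\mathbb P(\ME)$ — whether an $\mathfrak{sl}_2$-line or a polar line — the extremal points of $\ell$ span a $3$-dimensional subspace $\langle\ell\rangle$ of $\MG$ in which they form an oval. This is the exact analogue of the statement, in the symplectic model $S_f$, that the three ``quadratic'' tensors $v\otimes f_v$, $w\otimes f_w$, $(v+w)\otimes f_{v+w}$ are linearly independent and that $v\otimes f_v$, $w\otimes f_w$, $v\otimes f_w + w\otimes f_v$ span a $3$-space (Example \ref{exam2dim}). Concretely, for any two points $p,q$ on a common line, the extremal element attached to a third point of that line is \emph{not} in $\langle X_p, X_q\rangle$, and this gives me a supply of independence relations. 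I would then run the standard count for the quadric Veronesean: the span of all $X_v$ over $v$ ranging over a basis and all pairwise sums $e_i+e_j$, $e_i+f_j$, $f_i+f_j$ has dimension at least $2m$ (the basis vectors) plus $\binom{2m}{2}$ (one new direction for each unordered pair, supplied by the oval in the corresponding line $\langle\ell\rangle$), namely $2m + \binom{2m}{2} = m(2m+1)$. Making ``one new direction per pair'' rigorous is where Lemma \ref{V1*} is used repeatedly, together with the fact that the $\mathfrak{sl}_2$-line or polar line through any two points is unique, so these increments genuinely accumulate rather than collapsing.

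I expect the main obstacle to be the induction / assembly step: showing that the $\binom{2m}{2}$ ``mixed'' directions $X_{e_i+e_j} - (\text{something in } \langle X_{e_i}, X_{e_j}\rangle)$ are jointly independent, not merely pairwise new. The clean way to organize this is by induction on $m$ (or on the dimension of the spanning set of vectors): a nondegenerate symplectic subspace $(V',f')$ of codimension $2$ gives a subspace $\E'$ of $\E$ with $(\ME',\mathcal{L}_{\E'})\cong HSp(V',f')$, hence by induction a subspace of $\MG$ of dimension $\geq (m-1)(2m-1)$; one then has to add back the $2m-1 + 2m - 1 + 1$ ``new'' directions coming from $e_m$, $f_m$ and their interactions with $V'$, and check via Lemma \ref{V1*} and the symplectic-plane structure (Proposition \ref{symplecticplane}) that none of these lie in the previously constructed subspace. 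The delicate point is that the embedding of $\ME$ into $\MP(\MG)$ is not a priori known to be the Veronesean embedding — that is precisely what Theorem \ref{SvM} will later establish — so all independence must be extracted purely from the oval-per-line property of Lemma \ref{V1*} and the incidence geometry of $HSp(V,f)$, without assuming any coordinate formula for $\MG$.
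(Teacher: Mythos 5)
There is a genuine gap, and it sits exactly where you flag ``the main obstacle'': the joint independence of the $\binom{2m}{2}$ mixed directions. Lemma \ref{V1*} gives you, for each line $\ell$, that $X_{v+w}\notin\langle X_v,X_w\rangle$, i.e.\ one new direction \emph{relative to the two endpoints of that particular line}; it says nothing about how the new directions coming from different lines interact, and your proposed fix --- induct on $m$ and ``check that none of the $4m-1$ new elements lies in the previously constructed subspace'' --- still only controls each new element individually, not their span modulo $\MG'$. Worse, this cannot be repaired by incidence geometry alone: a projection of the Veronesean embedding $v\mapsto v\otimes f_v$ from a point of $\MP(S_f)$ lying on no conic plane is still injective and still satisfies the oval-per-line property, yet lands in a space of strictly smaller dimension. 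This is precisely why Theorem \ref{SvM} carries the hypothesis $d\geq\frac12 n(n+1)$ as an input rather than deriving it, and why Lemma \ref{V3*} has to be proved before Corollary \ref{corSvM} can be invoked. Any proof that uses only Lemma \ref{V1*} and the structure of $HSp(V,f)$ would ``prove'' the bound for such projections as well, so it must be wrong.

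The paper's proof supplies the missing joint independence from the Lie algebra structure rather than the geometry. It inducts on $m$ with $\MG_0=\langle z\in\ME\mid [z,x]=[z,y]=0\rangle$ for a noncommuting pair $x,y$; the inductive hypothesis together with Corollary \ref{corSvM} and Proposition \ref{Iso} identifies $\MG_0\cong\mathfrak{sp}(V',f')$, and then the quotients $\MG_x/(\langle x\rangle+\MG_0)$ and $\MG_y/(\langle y\rangle+\MG_0)$ are shown to carry the natural $2m$-dimensional module for $\MG_0$. Irreducibility of that module is what forces these pieces to have the full dimension $2m$ --- that is the step that replaces your ``jointly independent, not merely pairwise new,'' and it has no purely incidence-geometric substitute. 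If you want to salvage your approach, you would need some comparable global input (e.g.\ the nondegeneracy of the extremal form $g$ used quantitatively, or the module structure as in the paper); as written, the counting argument does not close.
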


\begin{proof}
We prove this by induction on $m$.

If $m=1$, then $(\E,\mathcal{L}_\E)$ is a single $\MSL$-line and $\dim \MG=\dim \MSL=3=1\cdot(2\cdot 1+1)$.

Now suppose the statement of the proposition is true for some $m\in \MN$. 
Then consider the case where $(\E,\mathcal{L}_\E)\cong HSp(V,f)$, 
for some  nondegenerate symplectic space $(V,f)$ of dimension $2(m+1)$. 
We fix two noncommuting extremal points $x,y$ in $\E$  and consider $\MG_0=\langle z\in \ME|[z,x]=[z,y]=0\rangle$. This Lie algebra is generated by $\E_0=\{z\in \E\mid z\perp x,y\}$, a subspace of  $(\E,\mathcal{L}_\E)$  isomorphic to $HSp(V',f')$, where $(V',f')$ is a nondegenerate symplectic space of dimension $2m$. By induction $\MG_0$ has dimension $\geq m(2m+1)$. 
So, we can apply Corollary \ref{corSvM} and find that there is an isomorphism
between the vector space $\g_0$ and $S_{f'}$ mapping extremal elements to pure tensors. But then Proposition \ref{Iso} implies that $\g_0$ is isomorphic to
 $\mathfrak{sp}(V',f')$.

We provide a lower bound for the dimension of $\g$ by computing lower bounds
for various composition factors of the action of $\g_0$ on $\g$.

Consider $\MG_x/(\langle x\rangle +\MG_0)$, where $\MG_x:=\langle z\in \ME |[z,x]=0\rangle $. Each polar line $s$ on $x$ spans a $3$-space in $\MG_x$ which meets $\MG_0$ in at most one point, so $s$ maps to a space of dimension at most $1$ in $\MG_x/(\langle x\rangle +\MG_0)$. Now assume that $s\subseteq \langle \MG_0+x\rangle$. Then the set $C_s(y)$ of elements in $s$ commuting with $y$ contains $\MG_0\cap s$, which is at least $2$-dimensional. But inside the $\mathfrak{sp}_3(\MF)$-subalgebra spanned by $x,y$ and $s$ we see that $C_s(y)$ is a $1$-space, a contradiction. So it follows that indeed $s$ is mapped to a $1$-dimensional subspace in $\MG_x/(\langle x\rangle +\MG_0)$.

Let $\mathfrak{s}:=\mathfrak{sp}_3(\MF)$ as in Example \ref{exam4} be a Lie algebra generated by a symplectic triple, such that $x$ spans the center of $\mathfrak{s}$. Then the intersection of the geometries of  $\mathfrak{s}$ and $\MG_0$ is a hyperbolic line $l$ (so a $3$-space in $\MG_0$) and $\mathfrak{s}$ is mapped to a subspace of dimension at most $6-(3+1)=2$ in $\MG_x/(\langle x\rangle +\MG_0)$. We prove that this subspace is indeed of dimension $2$. 
We use that $\mathfrak{s}\cong N: \MSL$, where $N\cong \MF^{1+2}$ is an ideal isomorphic to a  non-split extension of the natural module for $\MSL$ by a $1$-dimensional center. Note that the elements of $\mathfrak{s}$ that are in $\MSL$ commute with $y$, as stated before. So assume there is an $n\in N$ that commutes with $y$. Clearly $n$ is not in the center of  $N$. But the action of $\MSL$ on $N/\langle x\rangle$ is the action on the natural module, so the images of $n$ under this action will generate the full ideal $N$ and commute with $y$. This implies $[x,y]=0$, a contradiction. So $\mathfrak{s}$ maps to a $2$-dimensional subspace in $\MG_x/(\langle x\rangle +\MG_0)$.

Note that the  geometry of the space spanned by polar lines $l$ on $x$ together with all possible subspaces $\mathfrak{s}$ as above on $x$ is isomorphic to $HSp(V',f')$. 

As follows from the above, this space is naturally embedded into $\MG_x/(\langle x\rangle +\MG_0)$, which therefore has dimension $2m$ (by \cite{Cuy91} and \cite{Lef81}) and is isomorphic to the natural module for $\MG_0$. 

A similar construction of the spaces $\MG_y$ and $\MG_y/(\langle y\rangle+\MG_0)$ leads to similar conclusions.

So  $\MG_x/(\langle x\rangle +\MG_0)$ and $\MG_y/(\langle y\rangle +\MG_0)$ are both $2m$-dimensional and by the above construction natural modules for $\MG_0$. 
 These natural modules are irreducible, and we deduce
\begin{align*}\dim \MG_{2(m+1)}\geq&\dim \langle x,y\rangle +\dim \MG_x/(\langle x\rangle +\MG_0)+ \dim \MG_y/(\langle y\rangle +\MG_0) + \dim \MG_0\\
\geq & 3+2m+2m+m(2m+1)\\
=&2m^2+5m+3\\
=&(m+1)(2(m+1)+1).
\end{align*} 

So, the proposition is proved by induction.
\end{proof}

We are now able to identify finite dimensional $\g$:

\begin{satz}\label{ThmVeronesian}
If $\g$ is finite dimensional, then $\MG\cong \mathfrak{sp}(V,f)$.
\end{satz}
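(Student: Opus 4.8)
The strategy is to combine the geometric identification of $\Gamma(\E)$ as $HSp(V,f)$ (Theorem~\ref{PhiThm}) with the dimension bound of Lemma~\ref{V3*} and the embedding result Corollary~\ref{corSvM}, and then invoke the uniqueness of the Lie product (Proposition~\ref{Iso}). Concretely, I would argue as follows. Since $\g$ is finite dimensional and simple, the extremal form $g$ is nondegenerate (its radical is an ideal, hence $\{0\}$), so by Lemma~\ref{nondegen} the subspace $\E$ is nondegenerate and by Theorem~\ref{PhiThm} the geometry $\Gamma(\E)$ is isomorphic to $HSp(V,f)$ for a nondegenerate symplectic space $(V,f)$; write $\dim V = 2m$. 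We may assume $\Gamma(\E)$ is connected (otherwise $\g$ decomposes as a direct sum over the components and we treat each summand separately; the characterization is closed under such sums). If $m=1$ then $\g \cong \MSL = \mathfrak{sp}_1(\MF) = \mathfrak{fsp}(V,f)$ directly, and if $m=2$ one can appeal to the structure of $\mathfrak{sp}_3$-type subalgebras plus a small explicit argument, or simply push the general argument through once $2m \geq 4$.

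For $2m \geq 4$ the core of the proof is to apply Corollary~\ref{corSvM} with $W = \g$ and the embedding $\cV = \hat\varphi^{-1}\colon \mathbb{P}(V) \to \mathbb{P}(\g)$ obtained from the isomorphism $\hat\varphi\colon \MP(\ME) \to \MP(V)$ of Remark~\ref{pollines}. Condition~(a) of Theorem~\ref{SvM} holds because $\E$ linearly spans $\g$; condition~(b) — that each line of $\mathbb{P}(V)$ maps to an oval spanning a plane and meeting $\cV(P)$ in exactly that oval — is precisely Lemma~\ref{V1*}; and condition~(c), $d = \dim \g \geq \tfrac12 (2m)(2m+1) = m(2m+1)$, is exactly Lemma~\ref{V3*}. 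Corollary~\ref{corSvM} then yields $\dim \g = m(2m+1)$, $\MF \cong \mathbb{K}$, and a semi-linear bijection $\phi\colon S_f \to \g$ with $\cV(\langle v\rangle) = \langle \phi(v\otimes f_v)\rangle$ for all $v\in V$. In particular $\phi$ carries the pure tensors $v\otimes f_v$ (which span the extremal points of $\mathfrak{fsp}(V,f)$, by Proposition~\ref{puretensors}) onto the extremal points of $\g$, so $\phi$ induces an isomorphism of $\MSL$-geometries $\Gamma(\mathfrak{fsp}(V,f)) \cong \Gamma(\E)$.

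Having a semi-linear bijection between the vector spaces of $\g$ and $S_f$ that induces an isomorphism of the associated $\MSL$-geometries, I would conclude by Proposition~\ref{Iso} that $\g \cong \mathfrak{fsp}(V,f)$, which is exactly $\mathfrak{sp}(V,f)$ in the notation of the theorem. The main obstacle in this argument is not in the final assembly — which is essentially bookkeeping once the three hypotheses of the Schillewaert–Van Maldeghem theorem are verified — but rather lies upstream, in Lemmas~\ref{V1*} and~\ref{V3*}: showing that a polar line of $\Gamma(\E)$ spans a plane in which the extremal points form an oval (and no extra extremal points intrude), and establishing the sharp lower bound $\dim \g \geq m(2m+1)$ via the inductive analysis of the composition factors of $\g_0$ acting on $\g_x/(\langle x\rangle + \g_0)$. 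Since those lemmas are already proved in the excerpt, the remaining work here is just to check the hypotheses and apply the cited results; one should, however, take a moment to dispatch the small cases $m \leq 2$ (where Corollary~\ref{corSvM} needs $n \geq 4$) and to handle the disconnected case, before invoking the machinery for $2m \geq 4$.
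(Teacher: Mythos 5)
Your proposal is correct and follows essentially the same route as the paper: verify hypotheses (a)--(c) of Corollary~\ref{corSvM} for the embedding $\hat\varphi^{-1}$ via the spanning assumption, Lemma~\ref{V1*} and Lemma~\ref{V3*}, obtain the semi-linear bijection $\phi\colon S_f\to\g$ carrying pure tensors to extremal points, and conclude with Proposition~\ref{Iso}. The only step the paper makes explicit that you gloss over is that $V$ itself is finite dimensional (a finite spanning set $\mathcal{B}\subset\E$ gives $\varphi(\mathcal{B})^\perp=\emptyset$, which forces $\dim V<\infty$ by nondegeneracy of $f$), and your concern about the case $m=2$ is unnecessary, since then $n=2m=4$ already meets the requirement $n\geq 4$ of Corollary~\ref{corSvM}.
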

\begin{proof} 
Suppose $\g$ is finite dimensional.
The $\MSL$-geometry $(\E,\mathcal{L}_\E)$ is isomorphic to $HSp(V,f)$. 
As we saw in  Remark \ref{pollines}, this isomorphism, denoted by $\varphi$, extends uniquely to an isomorphism $\hat\varphi$ between $\mathbb{P}(\ME)$ and  $\mathbb{P}(V)$.

The space  $V$ is also finite dimensional. Indeed, as $\E$ spans $\g$, we can find a finite set $\mathcal{B}\subset \E$ spanning $\g$. But then $\varphi(\mathcal{B})^\perp$ is empty. As $(V,f)$ is nondegenerate, this is only possible if $V$ is finite dimensional.  

By \ref{V1*} and \ref{V3*} we can conclude that $\hat\varphi^{-1}$ maps
points from $\mathbb{P}(V)$ to points of $\mathbb{P}(\E)$ and  satisfies 
the conditions (a), (b) and (c) of Corollary \ref{corSvM}.
In particular, there exists a semi-linear invertible  $\phi$ from $S_f$ 
to $\g$ with for all nonzero $v\in V$:
$$\langle\phi(v\otimes f_v)\rangle= \hat\varphi^{-1}(\langle v\rangle).$$
But that means that we can not only identify the vector spaces $S_f$ and $\g$, but, simultaneously, also the $\MSL$-geometries on the extremal points of $\mathfrak{s}_f$ and on $\E$. So, we can  apply Proposition \ref{Iso} and conclude that $\mathfrak{s}_f$ is isomorphic to $\g$.
\end{proof}

\section{Proof of Theorem \ref{MainThm}}

\label{sec:final}

In this final section we will finish the proof of Theorem \ref{MainThm}.

So, suppose that $\mathfrak{g}$ is a simple Lie algebra over
the field $\mathbb{F}$ of characteristic not $2$.
Moreover, assume that $\mathfrak{g}$ is generated by its set of extremal elements $E$ satisfying the conditions (a) and (b) of Theorem \ref{MainThm}.

Let $g$ be the extremal form of $\mathfrak{g}$.
As the radical of $g$ is an ideal in $\mathfrak{g}$,
we find this radical to be trivial.
In particular, all elements in $E$ are pure, see \ref{extremalform}

That $\mathfrak{g}$ is simple, also implies that the 
geometry $\Gamma(\mathfrak{g})$ is nondegenerate.
Hence, $\Gamma(\mathfrak{g})$ is isomorphic to $HSp(V,f)$ for some nondegenerate
symplectic space $(V,f)$ over $\mathbb{F}$. See Theorem \ref{PhiThm}.
Denote this isomorphism by $\varphi$.

If $V$ is finite dimensional, then we can apply the results of the previous section, in particular, Theorem \ref{ThmVeronesian} can be applied, and we find 
$\mathfrak{g}$ to be isomorphic to 
$\mathfrak{sp}(V,f)$. 

So, assume that $V$ is not of finite dimension.
Then let $\mathcal{S}$ be the set of all nondegenerate, finite dimensional 
subspaces of $V$, partially ordered by inclusion. Clearly, this is a directed set.  Then for each $V_0\in\mathcal{S}$ denote by 
$\mathfrak{g}_{V_0}$ the subalgebra of $\mathfrak{g}$ generated by $\E_{V_0}$, the set of those extremal points $x$ for which $\varphi(x)\subseteq V_0$. 

The set $\E_{V_0}$ is closed under $\MSL$-lines and hence a  subspace of 
$\Gamma(\g)$ and of $\Gamma(\g_{V_0})$ isomorphic to $HSp(V_0,f)$.

\begin{lem}\label{dimensions}
Suppose $V_0\in\mathcal{S}$.
Then the Lie algebra $\g_{V_0}$ is finite dimensional and linearly spanned by
the extremal elements in $\E_{V_0}$.
\end{lem}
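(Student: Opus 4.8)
The plan is to combine the finite-dimensionality of $V_0$ with the structure of the $\MSL$-geometry $HSp(V_0,f)$ to control $\g_{V_0}$. First I would observe that $V_0$ is finite dimensional, say $\dim V_0 = 2m$, so the geometry $\E_{V_0}\cong HSp(V_0,f)$ has finitely many points (assuming $\MF$ is finite; for infinite $\MF$ one must instead argue with a finite spanning set of extremal points, see below). In either case the key point is that $\g_{V_0}$ is, by definition, generated by the subset $\E_{V_0}$ of extremal points, and this subset satisfies conditions (A)--(C) relative to $\g_{V_0}$: condition (A) is inherited from condition (a) of Theorem \ref{MainThm}, condition (B) holds because whenever $x,y,z\in \E_{V_0}$ with $[x,y]\neq 0$, the element $u\in\langle x,y\rangle$ commuting with $z$ produced by condition (b) lies on the $\MSL$-line through $x$ and $y$, hence in $\E_{V_0}$ since $\E_{V_0}$ is closed under $\MSL$-lines; and condition (C) holds by construction, with $\E_{V_0}$ connected because $HSp(V_0,f)$ is connected for $m\ge 1$.

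Next I would bound $\dim \g_{V_0}$ from above. The natural route is to note that the radical of the extremal form $g$ restricted to $\g_{V_0}$ is an ideal, but more directly one can apply Lemma \ref{V3*}-style reasoning in reverse: since $\E_{V_0}$ spans $\g_{V_0}$, it suffices to show that a \emph{finite} subset of $\E_{V_0}$ already spans $\g_{V_0}$, and this is where the real content lies. The cleanest argument is by induction on $m = \tfrac{1}{2}\dim V_0$, mirroring the proof of Lemma \ref{V3*}: for $m=1$, $\g_{V_0}\cong\MSL$ is $3$-dimensional and spanned by the two extremal elements generating it. For the inductive step, fix noncommuting $x,y\in\E_{V_0}$ and let $\g_0$ be generated by $\E_{V_0}\cap\{x,y\}^\perp\cong HSp(V_0',f')$ with $\dim V_0' = 2m-2$; by induction $\g_0$ is finite dimensional and spanned by its extremal elements. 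The quotients $\g_x/(\langle x\rangle + \g_0)$ and $\g_y/(\langle y\rangle + \g_0)$ are, as shown in the proof of Lemma \ref{V3*}, naturally embedded copies of the natural module for $\g_0$, hence finite dimensional, and the decomposition $\dim\g_{V_0}\le \dim\langle x,y\rangle + \dim\g_x/(\langle x\rangle+\g_0) + \dim\g_y/(\langle y\rangle+\g_0) + \dim\g_0$ gives a finite bound. Each of these pieces is spanned by extremal elements (those in the relevant symplectic triples and polar lines), so $\g_{V_0}$ is linearly spanned by finitely many extremal elements of $\E_{V_0}$, and since $\E_{V_0}\subseteq E$, these extremal elements of $\g$ lie on extremal points of $\E_{V_0}$.

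I expect the main obstacle to be making the spanning argument genuinely independent of the finiteness of the ground field $\MF$: one must verify that the "natural module" quotients appearing in the Lemma \ref{V3*} analysis are finite dimensional without assuming finitely many extremal points, which forces one to run the full inductive dimension count rather than a naive counting argument. A secondary subtlety is confirming that each composition factor in that count is not merely finite dimensional but actually \emph{spanned by images of extremal elements}, so that one can conclude $\g_{V_0} = \langle E_{V_0}\rangle = \operatorname{span}_\MF(E_{V_0})$ rather than just $\dim\g_{V_0}<\infty$; this requires tracking, through the proof of Lemma \ref{V3*}, that the preimages of a basis of each natural-module quotient can be chosen among the extremal elements lying in the relevant $\mathfrak{sp}_3(\MF)$-subalgebras and polar lines. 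Once finite-dimensionality is established, the fact that $\g_{V_0}$ is spanned by extremal elements of $\E_{V_0}$ is immediate from the definition of $\g_{V_0}$ as the subalgebra generated by $\E_{V_0}$ together with the observation that the span of $E_{V_0}$ is already a subalgebra, since $[u\otimes f_u, v\otimes f_v]$ is a linear combination of symmetric tensors $w\otimes f_w$ with $w\in V_0$ by the identity in Section \ref{sec:sympl}.
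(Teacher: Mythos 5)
There is a genuine gap in your finite-dimensionality argument. Your inductive step asserts the \emph{upper} bound
$\dim\g_{V_0}\le \dim\langle x,y\rangle + \dim\g_x/(\langle x\rangle+\g_0) + \dim\g_y/(\langle y\rangle+\g_0) + \dim\g_0$,
but the decomposition in the proof of Lemma \ref{V3*} only yields the reverse inequality: those subquotients are distinct composition factors of the $\g_0$-action, so their dimensions sum to a \emph{lower} bound for $\dim\g_{V_0}$. To turn this into an upper bound you would have to prove that $\langle x,y\rangle$, $\g_x$ and $\g_y$ together linearly span $\g_{V_0}$, i.e.\ that every extremal element of $\E_{V_0}$ commuting with neither $x$ nor $y$ decomposes into pieces lying in these subspaces. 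That is true in the symplectic model, but at this stage of the argument you cannot use the model (identifying $\g_{V_0}$ with $\mathfrak{sp}(V_0,f)$ is exactly what the lemma is needed for), and you give no intrinsic argument. The same circularity appears at the end of your proposal, where you justify that the span of $E_{V_0}$ is a subalgebra by citing the tensor identity $[u\otimes f_u,v\otimes f_v]=f(u,v)\bigl((u+v)\otimes f_{u+v}-u\otimes f_u-v\otimes f_v\bigr)$ from Section \ref{sec:sympl}; that identity lives in $\mathfrak{s}_f$, not in the abstract $\g$. The correct (and easy) fix for the spanning claim is the one the paper uses: for noncommuting $x,y\in E_{V_0}$ the bracket $[x,y]$ lies in $\langle x,y\rangle\cong\MSL$, which by Proposition \ref{sl2lines} is spanned by its extremal elements, all lying on the $\MSL$-line through $x$ and $y$, and $\E_{V_0}$ is closed under $\MSL$-lines.

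The paper's route to finite-dimensionality is quite different and avoids your difficulty entirely: it shows that the point-line geometry $HSp(V_0,f)$ is generated by a \emph{finite} set of points (by viewing the hyperbolic lines on a fixed point $\langle v\rangle$ as the points of a finite-dimensional affine space), so that $\g_{V_0}$ is generated by finitely many extremal elements, and then invokes \cite[Theorem 4.1]{CSUW01}, which says that a Lie algebra generated by finitely many extremal elements is finite dimensional. That external input is the missing idea in your proposal; without it (or without a genuine proof of your spanning decomposition), the induction does not close. Your parenthetical fallback for finite $\MF$ is also a red herring, since the statement must hold for all fields of characteristic $\neq 2$.
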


\begin{proof}
Suppose that $V_0$ is in $\mathcal{S}$. 
Then  $HSp(V_0,f)$ can be generated by a finite set $X$ of points.
Indeed, fix a  vector $0\neq v\in V_0$. 
Then the geometry with as points the  hyperbolic lines on $\langle v\rangle$ and as lines the symplectic planes on $\langle v\rangle$ is an affine space of finite dimension. 
So, there exists a finite set of hyperbolic lines on  $\langle v\rangle$
generating this affine space.
Now consider the set consisting of  the point $\langle v\rangle$
and a single point different from $\langle v\rangle$ on each of these 
generating hyperbolic lines. 
This finite set of points in $HSp(V_0,f)$ generates a subspace 
containing all lines and hence all symplectic planes on $\langle v\rangle$.
But, as each point of $HSp(V_0,f)$ is inside some symplectic plane with $\langle v\rangle$, the whole space $HSp(V_0,f)$ is generated by this finite set of points.

The Lie algebra $\g_{V_0}$ is then 
generated by the corresponding finite set of extremal points
in $\varphi^{-1}(X)$. But then $\g_{V_0}$ is finite dimensional, as follows from \cite[Theorem 4.1]{CSUW01}.

Since $\g_{V_0}$ is generated by the elements in $\E_{V_0}$
and for any two extremal elements $x,y$ inside points of $\E_{V_0}$, the product
$[x,y]$ is contained in the linear span of $\E_{V_0}$,
we find $\g_{V_0}$ to be linearly spanned by the elements in $\E_{V_0}$. 
\end{proof}

The above lemma implies that we can apply
Theorem \ref{ThmVeronesian} to find  $\mathfrak{g}_{V_0}$ to be isomorphic to  $\mathfrak{sp}(V_0,f)$. 

The subalgebras  $\mathfrak{g}_{V_0}$ and $\mathfrak{sp}(V_0,f)$, with
$V_0\in \mathcal{S}$, form isomorphic  local systems for $\mathfrak{g}$ and $\mathfrak{fsp}(V,f)$, respectively. As both $\mathfrak{g}$ and $\mathfrak{fsp}(V,f)$ are equal to the corresponding direct limits, we find them to be isomorphic, which finishes the proof of Theorem \ref{MainThm}.

\begin{bibdiv}
\begin{biblist}
\bibselect{Bibliography8}
\end{biblist}
\end{bibdiv}

\end{document}